\providecommand{\U}[1]{\protect\rule{.1in}{.1in}}
\newtheorem{theorem}{Theorem}
\newtheorem{corollary}[theorem]{Corollary}
\newtheorem{lemma}[theorem]{Lemma}
\newtheorem{proposition}[theorem]{Proposition}
\newenvironment{proof}[1][Proof]{\noindent\textbf{#1.} }{\ \rule{0.5em}{0.5em}}
\begin{document}

\title{On the Constructive Theory of Jordan Curves}
\author{Douglas S. Bridges}
\maketitle

\begin{abstract}%
\noindent
Using a definition of \emph{Jordan curve} similar to that of Dieudonn\'{e}, we
prove that our notion is equivalent to that used by Berg et al. in their
constructive proof of the Jordan Curve Theorem \cite{JCTthm}. We then
establish a number of properties of Jordan curves and their corresponding
index functions, including the important Proposition \ref{jun04p1} and its
corollaries about lines crossing a Jordan curve at a smooth point. The final
section is dedicated to proving that the index of a point with respect to a
piecewise smooth Jordan curve in the complex plane is identical to the
familiar winding number of the curve around that point. The paper is written
within the framework of Bishop's constructive analysis. Although the work in
Sections 3--5 is almost entirely new, the paper contains a substantial amount
of expository material for the benefit of the reader.

\end{abstract}%

\normalfont\sf

\section{Introduction}

This paper delves into the constructive theory of Jordan curves, beyond the
fundamental paper \cite{JCTthm}. Our setting is Bishop's constructive
mathematics \textbf{(BISH)}---very roughly, mathematics carried out with
intuitionistic/constructive logic and an appropriate set- or type-theory---and
assumes that the reader is either familiar with Bishop's approach or else has
access to, for example, one of \cite{Bishop,BB,Handbook,BVtech}. For
background in complex analysis we refer the reader to Chapter 5 of either
\cite{Bishop} or \cite{BB},\footnote{%
\normalfont\sf
We shall use (A.B.C) to refer to item (B.C) in Chapter A of \cite{Bishop},
\cite{BB}, or \cite{BVtech}.} and to our improved version \cite[Theorem
1]{Bridges24} of Cauchy's integral theorem (and hence of all Bishop's results
that refer to that theorem).\footnote{%
\normalfont\sf
What is the improvement in question? It is that, where Bishop uses restrictive
notions of path connectedness and homotopy, in \cite{Bridges24} we use the
standard notions.}

In \cite{JCTthm}, Berg et al. introduced a Euclidean-geometrical notion of
\emph{index}, a nonnegative integer $\mathsf{ind}(\zeta;\gamma)$%
\emph{\ }that\emph{\ }intuitively measures the number of times a closed path
$\gamma\ $in the complex plane $\mathbb{C}$ winds around a point $\zeta$
bounded away from $\gamma$. Defining a \emph{Jordan curve\ }to be what, in
classical terms, would be a homeomorphism of the unit circle in $\mathbb{C}$
onto a compact---that is, complete, totally bounded---subset of $\mathbb{C}$,
and using an ingenious tesselation argument, they then proved the
\textbf{constructive Jordan Curve Theorem},\footnote{%
\normalfont\sf
Actually, Berg et al. apply the name \emph{Jordan Curve Theorem} to a slightly
weaker result stated in the introduction to their paper.}

\begin{quote}
\textbf{JCT: \ \ }\emph{If }$\gamma$\emph{\ is a Jordan curve, and }%
$a,b$\emph{\ are points off }$\gamma$\emph{, then either }$a$\emph{\ and~}%
$b$\emph{\ can be joined by a polygonal path which is bounded away from
}$\gamma$\emph{\ or else the indexes of }$a$\emph{\ and }$b$\emph{\ differ by
}$1\ $\cite[Theorem 1]{JCTthm}.
\end{quote}%

\noindent
In our present paper we adopt a different definition of Jordan curve, one
based on that given by Dieudonn\'{e} \cite[pp. 255--256]{Dieudonne}. We prove
that our definition is more-or-less equivalent to that in \cite{JCTthm},
before establishing, constructively, a number of results---in particular ones
dealing with lines that intersect a Jordan curve---that enable us to prove,
finally, that the index of a point $\zeta$ with respect to a piecewise smooth
Jordan curve equals the winding number of $\gamma$ about $\zeta$ as defined in
complex analysis. For the reader's benefit and for expository clarity, we
include in our presentation some possibly familiar definitions and elementary
results; but most of the work below is completely new constructive analysis.

\section{Preliminaries}

In this section we lay down some preliminary definitions and facts many of
which may be familiar to the reader (though some of our definitions differ
from those in \cite{Bishop,BB})

Let $\mathbb{F}$ denote either the real line $\mathbb{R}$ or the complex plane
$\mathbb{C}$. When $z_{1},z_{2}$ are distinct points of $\mathbb{C}$ we use
these notations\footnote{%
\normalfont\sf
It should be clear from the context whether $\left(  z,z^{\prime}\right)  $
denotes an ordered pair of complex numbers or the open segment in $\mathbb{C}
$ with endpoints $z$ and $z^{\prime}$.} for line segments in $\mathbb{C\ }%
$with endpoints $z$ and $z^{\prime}$:%
\begin{align*}
\left[  z,z^{\prime}\right]   &  \equiv\{(1-t)z+tz^{\prime}:0\leq t\leq1\}\\
(z,z^{\prime}] &  \equiv\{(1-t)z+tz^{\prime}:0<t\leq1\},\\
(z,z^{\prime})\, &  \equiv\{(1-t)z+tz^{\prime}:0<t<1\},\\
\lbrack z,z^{\prime}) &  \equiv\{(1-t)z+tz^{\prime}:0\leq t<1\}.
\end{align*}

For each located subset $K$ of $\mathbb{F}$ and each $r>0$ we define%
\[
K_{r}\equiv\{z\in\mathbb{F}:\rho(z,K)\leq r\},
\]
where the \textbf{distance from }$z$\textbf{\ to} $K,$%
\[
\rho(z,K)\equiv\inf\left\{  \left\vert z-\zeta\right\vert :\zeta\in K\right\}
\text{,}%
\]
exists, by definition of \emph{located}. If $U$ is an open subset $\mathbb{F}
$ such that $K_{r}\subset U$ for some $r>0$, we say that $K$ is \textbf{well
contained }in\textbf{\ }$U$, which property we denote by $K\subset\subset U$.
If $K$ is totally bounded (and hence located), then $K_{r}$ is compact for
each $r>0$.

We recall here

\begin{quote}
\textbf{Bishop's Lemma:} \emph{Let }$A$\emph{\ be a complete, located subset
of a metric space }$\left(  X,\rho\right)  $\emph{. Then for each }$x\in
X$\emph{\ there exists }$a\in A$\emph{\ such that if\footnote{%
\normalfont\sf
It is important to note that in a metric space, `$x\neq a$' means
`$\rho(x,a)>0$', not `$\lnot(x=a)$': see pages 10--11 of \cite{BVtech}.}
}$x\neq a $\emph{, then }$\rho(x,A)>0$\emph{\ }(\cite[p. 177, Lemma 7]%
{Bishop},\cite[(3.1.1)]{BVtech}).
\end{quote}

Here are two simple, but useful, consequences of Bishop's Lemma.

\begin{lemma}
\label{jan01l1}Let $L$ be a line in $\mathbb{C}$, $\zeta_{0}\in L$, and
$\zeta\in\mathbb{C}-\left\{  \zeta_{0}\right\}  $. Then there exists
$\zeta^{\prime}\in L$ with $\left\vert \zeta^{\prime}-\zeta_{0}\right\vert
\leq3\left\vert \zeta-\zeta_{0}\right\vert $ such that if $\zeta\neq
\zeta^{\prime}$, then $\rho(\zeta,L)>0$.
\end{lemma}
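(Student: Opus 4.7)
The plan is to apply Bishop's Lemma not to $L$ itself but to a compact segment of $L$ centered at $\zeta_{0}$; this is where the size bound $|\zeta^{\prime}-\zeta_{0}|\leq3|\zeta-\zeta_{0}|$ will come from. Set $r\equiv|\zeta-\zeta_{0}|$, which is strictly positive since $\zeta\neq\zeta_{0}$, and let
\[
K\equiv L\cap\overline{B}(\zeta_{0},2r).
\]
Parametrising $L$ as $t\mapsto\zeta_{0}+te$ for a unit vector $e$ along $L$, we see $K$ is an isometric image of $[-2r,2r]$, hence compact and in particular complete and located. Bishop's Lemma applied to $K$ and $\zeta$ furnishes $\zeta^{\prime}\in K$ such that $\zeta\neq\zeta^{\prime}$ implies $\rho(\zeta,K)>0$; by construction $|\zeta^{\prime}-\zeta_{0}|\leq2r\leq3|\zeta-\zeta_{0}|$.

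The real content of the lemma is the upgrade from $\rho(\zeta,K)>0$ to $\rho(\zeta,L)>0$. Assuming $\zeta\neq\zeta^{\prime}$, set $\delta\equiv\rho(\zeta,K)>0$; I would establish the uniform bound $\rho(\zeta,L)\geq\min(\delta,r/2)$. Given any $w\in L$, apply the constructive trichotomy (for reals $a<b$ and any $x$, either $x<b$ or $x>a$) to $|w-\zeta_{0}|$ with $a=3r/2$ and $b=2r$. In the first alternative $w\in K$ and so $|w-\zeta|\geq\delta$; in the second, the triangle inequality gives $|w-\zeta|\geq|w-\zeta_{0}|-r>r/2$. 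Either way $|w-\zeta|\geq\min(\delta,r/2)$, and, $w\in L$ being arbitrary, $\rho(\zeta,L)\geq\min(\delta,r/2)>0$.

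The main obstacle is precisely the step that is trivial classically: one would like to take $\zeta^{\prime}$ to be the orthogonal projection of $\zeta$ onto $L$ and read off $\rho(\zeta,L)=|\zeta-\zeta^{\prime}|$, but without a positive witness of apartness between $\zeta$ and $L$ this does not yield a constructive proof that $\rho(\zeta,L)>0$. Bishop's Lemma replaces this classical shortcut; the overlap $3r/2<2r$ in the trichotomy is the device that lets us handle points of $L$ inside $K$ and points of $L$ outside $K$ under one uniform bound, without having to decide, for a given $w$, into which of these two regions it falls.
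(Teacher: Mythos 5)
Your proof is correct and takes essentially the same approach as the paper: truncate $L$ to a compact segment around $\zeta_{0}$, apply Bishop's Lemma to that segment, and observe that points of $L$ outside the segment are automatically far from $\zeta$. The paper's version uses radius $3r$ and shows $\rho(\zeta,L)=\rho(\zeta,A)$ outright via the dichotomy ``$z\in A$ or $|z-\zeta_{0}|\geq 2r$,'' while you use radius $2r$ and a constructive trichotomy to get the lower bound $\rho(\zeta,L)\geq\min(\rho(\zeta,K),r/2)$; these are cosmetic variants of the same idea.
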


\begin{proof}
Let%
\[
A=\left\{  z\in L:\left\vert z-\zeta_{0}\right\vert \leq3\left\vert
\zeta-\zeta_{0}\right\vert \right\}  .
\]
which, being compact, is complete and located. If $z\in L$, then either $z\in
A$ or $\left\vert z-\zeta_{0}\right\vert \geq2\left\vert \zeta-\zeta
_{0}\right\vert $. Since in the latter case%
\[
\left\vert z-\zeta\right\vert \geq\left\vert z-\zeta_{0}\right\vert
-\left\vert \zeta-\zeta_{0}\right\vert >\left\vert \zeta-\zeta_{0}\right\vert
\geq\rho(\zeta,A)\text{,}%
\]
we see that $\rho(\zeta,L)$ exists and equals $\rho(\zeta,A)$. The proof is
completed by applying Bishop's Lemma to $A$.%
\hfill

\end{proof}

\begin{lemma}
\label{nov12l-1}If $K$ is a compact subset of the half open interval $[a,b)$
\emph{(}respectively, $(a,b]$\emph{)} in $\mathbb{R}$, then there exists $r>0
$ such that $K\subset\lbrack a,b-r]$ \emph{(}respectively, $K\subset\lbrack
a+r,b]$\emph{)}. If $K$ is a compact subinterval of $\left(  a,b\right)  $,
then $K\subset\subset\left(  a,b\right)  $.\emph{\footnote{\textsf{Things are
different constructively in }$\mathbb{C}$\textsf{: there exists a recursive
example of a compact subset }$K$\textsf{\ of the open unit ball }%
$B(0,1)\ $\textsf{in }$\mathbb{C}$\textsf{\ such that there are points of }%
$K$\textsf{\ arbitrarily close to the boundary of }$B(0,1)$\textsf{. See
}\cite[Thm (3.1)]{BillFred84}.}}
\end{lemma}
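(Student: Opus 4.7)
The plan is to apply Bishop's Lemma with the compact (hence complete and located) set $K$ in place of $A$, and with the auxiliary point $x := b$. Bishop's Lemma then produces a point $k \in K$ with the property that if $b \neq k$ then $\rho(b,K) > 0$. The key constructive step is to verify that $b \neq k$, that is, $|b - k| > 0$. This follows immediately from the hypothesis $K \subset [a,b)$, since membership in the half-open interval is defined by the strict inequality $k < b$. With $\rho(b,K) > 0$ in hand, I would set $r := \rho(b,K)$ and note that any $z \in K$ satisfies both $z < b$ and $|z - b| \geq r$, whence $z \leq b - r$; thus $K \subset [a, b - r]$. The parenthetical case $K \subset (a,b]$ follows by reflecting through the midpoint $(a+b)/2$, or by a symmetric argument using Bishop's Lemma with $x := a$.

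For the second claim, I would exploit the fact that a compact subinterval $K$ of $(a,b)$ is simultaneously a compact subset of $[a,b)$ and of $(a,b]$. Applying the first half of the lemma twice then yields $r, s > 0$ with $K \subset [a+s,\, b-r]$. Setting $\delta := \tfrac12\min(r,s) > 0$, a direct estimate shows that every $w$ with $\rho(w, K) \leq \delta$ satisfies $a + s/2 \leq w \leq b - r/2$, so $K_\delta \subset (a,b)$, i.e., $K \subset\subset (a,b)$.

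I expect the main obstacle to be conceptual rather than technical: one must recognize that, although each point of $K$ lies strictly to the left of $b$, a uniform positive gap between $K$ and $b$ is not automatic constructively and has to be extracted via Bishop's Lemma. Indeed, the footnote's warning about the recursive example in $\mathbb{C}$ emphasizes that the totally ordered structure of $\mathbb{R}$, which is what converts $k \in [a,b)$ into the constructive inequality $b \neq k$ required to trigger Bishop's Lemma, has no analog in the plane; it is precisely this order-theoretic input that makes the one-dimensional statement go through.
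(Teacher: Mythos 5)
Your proof is correct and follows the same route as the paper: apply Bishop's Lemma to the compact (hence complete, located) set $K$ at the point $b$, use the order structure of $\mathbb{R}$ to extract the apartness $b\neq k$ from $k<b$, set $r=\rho(b,K)>0$, and argue symmetrically for the other endpoint; the second assertion then follows by combining the two bounds. The paper leaves the final step as ``readily follows,'' which is exactly the $K_\delta$ computation you supply.
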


\begin{proof}
If $K\subset\lbrack a,b)$ is compact, then $b\neq x$ for each $x\in K$. Since
$K$ is complete and located in $\mathbb{R}$, Bishop's Lemma shows that
$0<r\equiv\rho(b,K)$, so $K\subset\lbrack a,b-r]$. The case $K\subset(a,b]$ is
handled similarly; the case $K\subset\left(  a,b\right)  $ then readily
follows.%
\hfill

\end{proof}%

\medskip

We say that a complex-valued mapping $f$ whose domain $D$ is a subset of
\thinspace$\mathbb{F}$ is

\begin{itemize}
\item \textbf{continuous}

\begin{itemize}
\item[--] \textbf{on the compact set }$K\subset\mathbb{F}\ $if $f$ is defined
and uniformly continuous on $K$;

\item[--] \textbf{on the open set }$U\subset\mathbb{F}$ if it is defined on
$U$ and continuous on each compact set well contained in $U$.\footnote{%
\normalfont\sf
We shall make good use of the fact that if $f$ is continuous on an open set
$U$ and $S$ is a totally bounded set that is well contained in $U$, then $f$
is uniformly continuous on $S$.}
\end{itemize}

\item \textbf{differentiable}

\begin{itemize}
\item[--] \textbf{on the compact set} $K\subset\mathbb{F}$ if there exists a
continuous mapping $g:K\rightarrow\mathbb{C}$ with the property that for each
$\varepsilon>0$ there exists $\delta>0$ such that
\begin{equation}
\forall\zeta,z\in K\,(\left\vert z-\zeta\right\vert \leq\delta\Rightarrow
\left\vert f(z)-f(\zeta)-g(\zeta)(z-\zeta)\right\vert \leq\varepsilon
\left\vert z-\zeta\right\vert );\label{no1}%
\end{equation}

\item[--] \textbf{on the open set }$U\subset\mathbb{F}$, if there exists a
continuous mapping $g:U\rightarrow\mathbb{C}$ such that for each compact
$K\subset\subset U$, $f$ is differentiable and (\ref{no1}) holds.
\end{itemize}
\end{itemize}%

\noindent
In each case of the definition of differentiability, $g$ is called the
\textbf{derivative }of $f$ on the subset of $\mathbb{C}$ under consideration.

By a \textbf{path}\footnote{%
\normalfont\sf
Bishop requires paths to be piecewise differentiable.}\textbf{\ }in
$\mathbb{C}$ with \textbf{endpoints} $z_{0}$ and $z_{1}$ we mean a continuous,
complex-valued function $\gamma$ on a proper compact interval $\left[
a,b\right]  $---the \textbf{parameter interval} of $\gamma$---such that
$\gamma(a)=z_{0}$ and $\gamma(b)=z_{1}$; in that case, the path is said to
\textbf{join} $z_{0}$ and $z_{1}$, and the values of the function $\gamma$ are
the \textbf{points of} $\gamma$. We call $\gamma\ $a \textbf{closed path} if
$\gamma(a)=\gamma(b)$.

We say that $\gamma$ is \textbf{piecewise differentiable }if there exist real
numbers $t_{0}=a<t_{1}<\cdots<t_{n}=b$ such that $\gamma$ is
differentiable\footnote{%
\normalfont\sf
This means that the one-sided derivatives exist at $t_{k}$, but does not
require those two derivatives to be equal.} on each interval $\left[
t_{k},t_{k+1}\right]  $. In that case, if $t_{k}<t<t_{k+1}$ and $\gamma
^{\prime}(t)\neq0$, then $\gamma\,^{\prime}(t)$ is a tangent vector to
$\gamma$ at $t$, $\gamma^{\prime}(s)\neq0$ for all $s$ sufficiently close to
$t$ (since $\gamma$ is continuously differentiable on $\left[  t_{k}%
,t_{k+1}\right]  $), and we call $\gamma(t)$ a \textbf{smooth point} of
$\gamma$. If for each $k<n$ every point of $(t_{k},t_{k+1})$ is smooth, then
$\gamma$ is a \textbf{piecewise smooth path}. In the special case where
$a=t_{0}<t_{1}=b$, we drop `piecewise' from these definitions.

Let $\gamma_{1}:\left[  a_{1},b_{1}\right]  \rightarrow\mathbb{C}$ and
$\gamma_{2}:\left[  a_{2},b_{2}\right]  \rightarrow\mathbb{C}$ be paths such
that $\gamma_{1}(b_{1})=\gamma_{2}(a_{2})$. Let $\lambda$ be the continuous,
strictly increasing map of $[b_{1},b_{1}+b_{2}-a_{2}]$ onto $\left[
a_{2},b_{2}\right]  $ defined by%
\[
\lambda(b_{1}+t(b_{2}-a_{2}))\equiv(1-t)a_{2}+tb_{2}\ \ \ \ \left(  0\leq
t\leq1\right)  .
\]
Then there is a unique (uniformly) continuous function $\gamma_{3}$ on
$\left[  b_{1},b_{1}+b_{2}-a_{2}\right]  $ such that%
\[
\gamma_{3}(t)=\left\{
\begin{array}
[c]{ll}%
\gamma_{1}(t) & \text{if\ }t\in\left[  a_{1},b_{1}\right]  \text{ }\\
\gamma_{2}(\lambda(t)) & \text{if }t\in\left[  b_{1},b_{1}+b_{2}-a_{2}\right]
.
\end{array}
\right.
\]
This function $\gamma_{3}$ is called the \textbf{sum of the paths} $\gamma
_{1}$ and $\gamma_{2}$ and is denoted by $\gamma_{1}+\gamma_{2}$. The
\textbf{sum }$\gamma_{1}+\cdots+\gamma_{n}$\textbf{\ of }$n>1$\textbf{\ paths
}is defined inductively by $\gamma\equiv(\gamma_{1}+\cdots+\gamma
_{n-1})+\gamma_{n}$.

The \textbf{linear path }joining $z_{0}$ and $z_{1}$ is the path%
\[
\mathsf{lin}(z_{0},z_{1}):t\rightsquigarrow(1-t)z_{0}+tz_{1}\ \ \ (0\leq
t\leq1).
\]
A path $\gamma$ is \textbf{polygonal}, or \textbf{piecewise linear}, if there
exist $z_{1},\ldots,z_{n}$, called the \textbf{vertices }of $\gamma$, such
that%
\[
\gamma=\mathsf{lin}(z_{1},z_{2})+\mathsf{lin}(z_{2},z_{3})+\cdots
+\mathsf{lin}(z_{n-1},z_{n}).
\]

If $\gamma$ is a path with parameter interval $I$, then $\gamma(I)$, being the
image of a compact interval under a (uniformly) continuous function, is
totally bounded; so its closure $\mathsf{car}(\gamma)\equiv\overline
{\gamma(I)}$, the \textbf{carrier} of $\gamma$, is compact in $\mathbb{C}$ and
hence located. A subset $X$ of $\mathbb{C}$ is \textbf{bounded away} from
$\gamma$ by $c>0$ if $\rho(z,\mathsf{car}(\gamma))\geq c$ for all $z\in X$. If
$X=\left\{  \zeta\right\}  $ is a singleton, then it is bounded away from
$\gamma$ precisely when $\zeta$ belongs to the \textbf{metric complement }of
$\mathsf{car}(\gamma)$,%
\[
-\mathsf{car}(\gamma)\equiv\left\{  z\in\mathbb{C}:\rho(z,\mathsf{car}%
(\gamma))>0\right\}  ,
\]
in which case \textbf{the point }$\zeta$\textbf{\ is} \textbf{bounded away
from}, or \textbf{off},\ the path $\gamma$. We say that $\gamma$ \textbf{lies
in, }or \textbf{is a\ path in}, the set $S\subset\mathbb{C}$ if either $S$ is
compact and $\mathsf{car}(\gamma)\subset S$ or else $S$ is open and
$\mathsf{car}(\gamma)\subset\subset S$.

\begin{proposition}
\label{jun06p1}Let $\gamma$ be a path in $\mathbb{C}$ with parameter interval
$I$. Then $\zeta\in\mathbb{C}$ is bounded away from $\gamma$ if and only if
$\zeta\neq z\ $for each $z\in\mathsf{car}(\gamma)$.
\end{proposition}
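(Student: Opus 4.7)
The plan is to prove the two implications separately, with the reverse direction being a direct application of Bishop's Lemma to the compact set $\mathsf{car}(\gamma)$.

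First I would dispatch the forward direction, which is essentially definitional. Suppose $\zeta$ is bounded away from $\gamma$, so by definition $\rho(\zeta,\mathsf{car}(\gamma))>0$. Then for any $z\in\mathsf{car}(\gamma)$ we have $|\zeta-z|\geq\rho(\zeta,\mathsf{car}(\gamma))>0$, which is exactly the metric-space meaning of $\zeta\neq z$ (as emphasized in the footnote to Bishop's Lemma).

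For the converse, I would first recall, as established in the preliminaries, that since $\gamma$ is a (uniformly) continuous function on the compact parameter interval $I$, the image $\gamma(I)$ is totally bounded, and hence its closure $\mathsf{car}(\gamma)$ is compact in $\mathbb{C}$; in particular it is complete and located. Assuming that $\zeta\neq z$ for every $z\in\mathsf{car}(\gamma)$, I would apply Bishop's Lemma with $X=\mathbb{C}$ and $A=\mathsf{car}(\gamma)$: it yields some specific $a\in\mathsf{car}(\gamma)$ with the property that whenever $\zeta\neq a$, then $\rho(\zeta,\mathsf{car}(\gamma))>0$. By the standing hypothesis applied to this particular $a$, we do have $\zeta\neq a$, and so $\rho(\zeta,\mathsf{car}(\gamma))>0$, as required.

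I do not anticipate a genuine obstacle here; the subtlety the proposition is isolating is purely constructive. Classically the equivalence is immediate from compactness, but in \textbf{BISH} the implication from ``$\zeta\neq z$ for every $z\in\mathsf{car}(\gamma)$'' to ``$\rho(\zeta,\mathsf{car}(\gamma))>0$'' is exactly the content of Bishop's Lemma, and invoking it requires only the compactness of $\mathsf{car}(\gamma)$ that has already been recorded. The only care needed is to read $\neq$ in its metric-space sense throughout so that the hypothesis of Bishop's Lemma lines up precisely with the assumption of the proposition.
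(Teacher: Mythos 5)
Your proof is correct and follows essentially the same route as the paper: dispatch the forward direction as immediate from the definition of $\rho$, then for the converse apply Bishop's Lemma to the compact (hence complete and located) set $\mathsf{car}(\gamma)$ to obtain a witness $a$ at which the hypothesis $\zeta\neq a$ forces $\rho(\zeta,\mathsf{car}(\gamma))>0$.
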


\begin{proof}
The proof of\ `only if' is trivial. Since $\mathsf{car}(\gamma)$ is compact,
it is complete and located, so, by Bishop's Lemma, there exists $z\in
\mathsf{car}(\gamma)$ such that if $\zeta\neq z$, then $\rho(\zeta
,\mathsf{car}(\gamma))>0$, from which `if' follows immediately.%
\hfill

\end{proof}%

\medskip
We will need this simple result, whose proof we include for the sake of completeness.

\begin{proposition}
\label{jun10p3}Let $a<b$, and let $f$ be a continuous, strictly increasing
mapping of $\left[  a,b\right]  $ into $\left[  f(a),f(b)\right]  $. Then the
range of $f$ is $\left[  f(a),f(b)\right]  $, and $f^{-1}$ is (uniformly)
continuous on $\left[  f(a),f(b)\right]  $.
\end{proposition}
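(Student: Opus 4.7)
The proposition splits into two parts: (a) surjectivity of $f$ onto $[f(a),f(b)]$, and (b) uniform continuity of $f^{-1}$. For (a), given $y\in[f(a),f(b)]$, the plan is to construct a preimage by bisection. I would maintain nested intervals $[u_n,v_n]\subset[a,b]$ of length $(b-a)/2^n$ satisfying the approximate bracketing $f(u_n)\le y+2^{-n}$ and $f(v_n)\ge y-2^{-n}$. At step $n$, I would compare the midpoint value $f(m_n)$ with $y$ via the constructive approximation principle (for any $\alpha<\beta$ in $\mathbb{R}$, every real is either $>\alpha$ or $<\beta$) and assign $m_n$ to $u_{n+1}$ or $v_{n+1}$ accordingly. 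The common limit $x=\lim u_n=\lim v_n$ lies in $[a,b]$, continuity of $f$ forces $f(x)=y$, and strict monotonicity gives uniqueness of $x$ as well as the pointwise definition of $f^{-1}$ on $[f(a),f(b)]$.

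For (b), I would reduce the uniform continuity of $f^{-1}$ to the following uniform strict monotonicity statement: for each $\varepsilon>0$ there exists $\delta>0$ such that whenever $x_1,x_2\in[a,b]$ with $x_2-x_1\ge\varepsilon$ one has $f(x_2)-f(x_1)\ge\delta$. Given this, if $|y_1-y_2|<\delta$ with $y_i=f(x_i)$, then the contrapositive rules out $|x_1-x_2|\ge\varepsilon$, giving $|f^{-1}(y_1)-f^{-1}(y_2)|<\varepsilon$. To obtain the bound, consider the compact set $K_\varepsilon=\{(x_1,x_2)\in[a,b]^2:x_2-x_1\ge\varepsilon\}$ and the uniformly continuous function $g(x_1,x_2)=f(x_2)-f(x_1)$, which is pointwise positive by strict monotonicity of $f$. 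Taking $\delta=\inf_{K_\varepsilon}g$ would close the argument provided this infimum is actually positive.

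The one genuinely constructive obstacle is this last point: passing from pointwise positivity of a uniformly continuous function on a compact set to a positive uniform lower bound is not automatic in BISH. My plan is to handle it by the standard attained-infimum argument. The real number $m=\inf_{K_\varepsilon}g$ exists because $g$ is uniformly continuous on a totally bounded set; choose an approximating sequence $(x_1^{(n)},x_2^{(n)})\in K_\varepsilon$ with $g(x_1^{(n)},x_2^{(n)})\to m$; extract a Cauchy subsequence using total boundedness of $K_\varepsilon$, which converges by completeness to some $(x_1^\ast,x_2^\ast)\in K_\varepsilon$; uniform continuity of $g$ then gives $m=g(x_1^\ast,x_2^\ast)>0$, so $\delta=m$ works. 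This sequential-compactness step is the delicate one and is where I would be most careful; the bisection in (a) and the reduction in (b) are routine once it is in place.
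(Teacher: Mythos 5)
Your overall strategy for part (b)---reduce to ``for each $\varepsilon>0$ there exists $\delta>0$ such that $x_2-x_1\geq\varepsilon$ implies $f(x_2)-f(x_1)\geq\delta$,'' then invert---is sound, and that uniform strict monotonicity statement is indeed true and provable in BISH. But the argument you offer for it does not go through. Extracting a Cauchy subsequence from an arbitrary sequence in a compact metric space is precisely what is \emph{not} available constructively: totally bounded plus complete does not imply sequentially compact in BISH, and the ``attained-infimum'' argument you sketch is a classical move equivalent in strength to an omniscience principle. Worse, the very conclusion you want from it---that a pointwise-positive uniformly continuous function on a compact set has positive infimum---is provably false in the recursive model; the present paper itself flags this (see the footnote following Proposition \ref{aug16p2}, citing the Julian--Richman example in Chapter 6 of \cite{BR}). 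So the ``delicate step'' you correctly identified as the crux is the one your proposal does not, in fact, handle.

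The way to repair it is to exploit the special structure rather than appealing to compactness: this is what the paper does. Choose a finite partition $a=t_0<t_1<\cdots<t_N=b$ with small mesh (the paper uses mesh such that $t_{k+3}-t_k<\varepsilon$), set $s_k=f(t_k)$, and take $\delta$ to be a positive multiple of the \emph{finite} minimum $\min_k(s_{k+1}-s_k)$, which is positive because each $s_{k+1}-s_k>0$ and there are only finitely many. Then, given $s,s'$ with $\left\vert s-s'\right\vert<\delta$, a short case analysis locating $s$ between consecutive $s_k$'s bounds $\left\vert f^{-1}(s)-f^{-1}(s')\right\vert$ by the mesh, hence by $\varepsilon$. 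No infimum over a compact set, and no subsequence extraction, is needed. Your uniform-monotonicity lemma can be proved the same way (if $x_2-x_1\geq\varepsilon$ and the mesh is $<\varepsilon/2$, then $[x_1,x_2]$ contains a full subinterval $[t_k,t_{k+1}]$, so $f(x_2)-f(x_1)\geq s_{k+1}-s_k$), and the two routes are then essentially equivalent.

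As a smaller point, the bisection invariant in part (a) as written is not maintainable: if you replace only one endpoint at step $n$, the untouched endpoint still satisfies only the weaker bound from step $n-1$, so you cannot tighten both sides to $2^{-(n+1)}$ simultaneously. The standard fix is to maintain a fixed-slack invariant and invoke uniform continuity of $f$ at the end, or simply to cite the constructive IVT for strictly increasing functions; the paper sidesteps this entirely by quoting the remark on pages 40--41 of \cite{BB}.
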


\begin{proof}
It follows from the remark on pages 40-41 of \cite{BB} that $f$ maps $\left[
a,b\right]  $ onto $\left[  f(a),f(b)\right]  $. Fixing $\varepsilon>0$,
choose $N\geq3$ and points $t_{k}$ such that $a=t_{0}<t_{1}<\cdots<t_{N}=b$
and $t_{k+3}-t_{k}<\varepsilon\ (0\leq k\leq N-3)$. Let $s_{k}=f(t_{k})$ for
each $k$. Then $f(a)=s_{0}<s_{1}<\cdots<s_{N}=f(b)$, so%
\[
\delta\equiv\tfrac{1}{2}\min\left\{  s_{k+1}-s_{k}:0\leq k<N\right\}  >0.
\]
Consider $s,s^{\prime}\in\left[  f(a),f(b)\right]  $ with $\left\vert
s-s^{\prime}\right\vert <\delta$. Either $\left\vert f^{-1}(s)-f^{-1}%
(s^{\prime})\right\vert <\varepsilon$ or else $0<\left\vert f^{-1}%
(s)-f^{-1}(s^{\prime})\right\vert $. In the latter case, we may assume without
loss of generality that $f^{-1}(s)\,<f^{-1}(s^{\prime})$ and therefore
$s<s^{\prime}$. Choose $k$ such that $s_{k}\leq s<s_{k+2}$. If $k\leq N-3$,
then $s_{k}\leq s<s^{\prime}<s_{k+2}+\delta<s_{k+3}$, so $t_{k}\leq
f^{-1}(s)<f^{-1}(s^{\prime})<t_{k+3}$ and therefore $0<f^{-1}(s^{\prime
})-f^{-1}(s)<\varepsilon$. If $k=N-2$, then a similar, but simpler, argument
completes the proof.%
\hfill

\end{proof}%

\medskip
Let $\gamma$ be a path in $\mathbb{C}$ with parameter interval $\left[
a,b\right]  $, and let $\left[  c,d\right]  $ be a proper compact interval in
$\mathbb{R}$. Let $f$ be the continuous, strictly increasing mapping of
$\left[  a,b\right]  $ into $\left[  c,d\right]  $ defined by%
\[
f(\left(  1-t)a+tb\right)  =\left(  1-t\right)  c+td\text{ \ }\left(  0\leq
t\leq1\right)  .
\]
From Proposition \ref{jun10p3} we see that $f$ maps $\left[  a,b\right]  $
onto $\left[  c,d\right]  $ and that the inverse function $f^{-1}$ is
continuous on $\left[  c,d\right]  $. Hence $\gamma\circ f^{-1}$ is a path
with parameter interval $\left[  c,d\right]  $, and $\mathsf{car}(\gamma\circ
f^{-1})=\mathsf{car}(\gamma)$. We call $\gamma$ and $\gamma\circ f^{-1}$
\textbf{equivalent paths}. Equivalence of paths is an equivalence relation.

Closed paths $\gamma_{0},\gamma_{1}$ that lie in a compact set $K\subset
\mathbb{C}$ and have the same parameter interval $\left[  a,b\right]  $ are
\textbf{homotopic in }$K$ if there exists a continuous function $\sigma
:[0,1]\times\left[  a,b\right]  \rightarrow K$ such that for each $t\in\left[
0,1\right]  $, the function $\sigma_{t}:\left[  a,b\right]  \rightarrow
\mathbb{C}$ defined by $\sigma_{t}(x)\equiv\sigma(t,x)$ is a\footnote{%
\normalfont\sf
Bishop requires that the paths $\sigma_{t}$ $(0\leq t\leq1)\ $be piecewise
differentiable \cite[(5.3.11)]{BB}.} closed path, $\sigma_{0}=\gamma_{0}$, and
$\sigma_{1}=\gamma_{1}$. The function $\sigma$ is then called a
\textbf{homotopy\ }of $\gamma_{0}$ and $\gamma_{1}$. If also $U\subset
\mathbb{C}$ is open and $K\subset\subset U$, then $\gamma_{0}$ and $\gamma
_{1}$ are said to be \textbf{homotopic in }$U$. If $\gamma_{1}$ is a constant
path, then $\gamma_{0}$ is said to be \textbf{null-homotopic}.

An open set $U\subset\mathbb{C}$ is

\begin{itemize}
\item \textbf{path connected }if any two points of $U$ can be joined by a path
in $U$;

\item \textbf{connected }if for any two inhabited open subsets $A,B\ $of $U$
with $U=A\cup B$ there exists $z\in$ $A\cap B$;\footnote{%
\normalfont\sf
Bishop uses the term \emph{connected }to mean that any two points of $U$ are
joined by a piecewise differentiable path in $U$. In the definition of
\emph{simply connected}~he uses \emph{connected} (in his sense) where we have
\emph{path connected.}}

\item \textbf{simply connected }if it is path connected and every closed path
in $U$ is null-homotopic.
\end{itemize}%

\noindent
Note that path connected implies connected and that \emph{classically}, in the
case of an open subset $U$ of $\mathbb{C}$, connected implies path connected
\cite[(9.7.2)]{Dieudonne}.

\section{Jordan curves}

With one eye on Dieudonn\'{e}'s definition, \cite[pp. 255--256]{Dieudonne}, we
define a \textbf{Jordan curve}, or \textbf{simple closed curve}, to be a
closed path $\gamma:\left[  a,b\right]  \rightarrow\mathbb{C}$ such that

\begin{enumerate}
\item[\textsf{J1}] if $\left(  t,t^{\prime}\right)  \in\lbrack a,b]\times
(a,b)$ and $t\neq t^{\prime}$, then $\gamma(t)\neq\gamma(t^{\prime})$;

\item[\textsf{J2}] for each compact $K\subset(a,b)$, $\gamma^{-1}$ is
continuous on $\gamma(K)$.
\end{enumerate}

In \cite{JCTthm}, Berg et al. use definitions of \emph{closed path} and
\emph{Jordan curve} different from ours. For them, a closed path is an ordered
pair $J=(S,f)$ where $S\subset\mathbb{C}$ and $f$ is a (uniformly) continuous
mapping $\gamma$ of the \textbf{unit circle}
\[
\mathsf{U}\equiv\left\{  z\in\mathbb{C}:\left\vert z\right\vert =1\right\}
\]
onto $S$; for $J$ to be a Jordan curve they require also that $f$ be injective
and have uniformly continuous inverse.\footnote{%
\normalfont\sf
They also impose the requirement of moduli of continuity for $f$ and $f^{-1}%
$.} We shall refer to these as `Berg notions'. Our main aim in this section is
to reveal the connection between the Berg notion of Jordan curve and our one.

\begin{proposition}
\label{jun12p1}Let $\gamma:[a,b]\rightarrow\mathbb{C}$ be
pointwise\footnote{\textsf{Pointwise continuity is continuity at each point,
in the usual sense. }} continuous at $a$ and $b$, with $\gamma(a)=\gamma(b)$.
Then the following conditions are equivalent:

\begin{enumerate}
\item[\emph{(i)}] for all $\left(  t,t^{\prime}\right)  \in\lbrack
a,b]\times(a,b)$, if $t\neq t^{\prime}$, then $\gamma(t)\neq\gamma(t^{\prime
})$;

\item[\emph{(ii)}] the restriction of $\gamma$ to $[a,b)$ is injective;

\item[\emph{(iii)}] the restriction of $\gamma$ to $(a,b]$ is injective.
\end{enumerate}
\end{proposition}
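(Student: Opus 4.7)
The plan is to close the cycle (i)$\Rightarrow$(ii)$\Rightarrow$(iii)$\Rightarrow$(i). The first implication is nearly automatic. Given $t_{1},t_{2}\in[a,b)$ with $t_{1}\neq t_{2}$, the identity $\max(t_{1},t_{2})=\tfrac{1}{2}(t_{1}+t_{2}+|t_{1}-t_{2}|)\geq a+\tfrac{1}{2}|t_{1}-t_{2}|$ shows that $\max(t_{1},t_{2})>a$, so after (if necessary) swapping we may assume $t_{2}\in(a,b)$; the pair $(t_{1},t_{2})\in[a,b]\times(a,b)$ is then covered by (i) directly.

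For (ii)$\Rightarrow$(iii), take $t_{1},t_{2}\in(a,b]$ with $t_{1}\neq t_{2}$. The symmetric computation $\min(t_{1},t_{2})\leq b-\tfrac{1}{2}|t_{1}-t_{2}|<b$ lets us assume, after swapping, that $t_{1}\in(a,b)\subseteq[a,b)$. Since $t_{1}\neq a$, hypothesis (ii) forces $\varepsilon\equiv\tfrac{1}{2}|\gamma(t_{1})-\gamma(a)|>0$. Using pointwise continuity of $\gamma$ at $b$ together with $\gamma(b)=\gamma(a)$, choose $\delta>0$ so that $|\gamma(s)-\gamma(a)|<\varepsilon$ for every $s\in[b-\delta,b]$. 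Cotransitivity of $<$ in $\mathbb{R}$, applied to the pair $b-\delta<b-\delta/2$, yields either $t_{2}<b-\delta/2$ or $t_{2}>b-\delta$. In the first case, $t_{2}\in[a,b)$ and (ii) gives $\gamma(t_{1})\neq\gamma(t_{2})$ at once; in the second, the reverse triangle inequality forces
\[
|\gamma(t_{1})-\gamma(t_{2})|\geq|\gamma(t_{1})-\gamma(a)|-|\gamma(t_{2})-\gamma(a)|>2\varepsilon-\varepsilon=\varepsilon.
\]

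The implication (iii)$\Rightarrow$(i) is strictly symmetric, using continuity at $a$ in place of continuity at $b$: from $t'\in(a,b)\subseteq(a,b]$ and $b\in(a,b]$ we obtain $\gamma(t')\neq\gamma(a)=\gamma(b)$ via (iii); setting $\eta\equiv\tfrac{1}{2}|\gamma(t')-\gamma(a)|$ and choosing $\delta>0$ with $|\gamma(s)-\gamma(a)|<\eta$ on $[a,a+\delta]$, a cotransitivity split on $t$ about $a+\delta/2$ and $a+\delta$ reduces the problem to either a direct application of (iii) (when $t>a+\delta/2$) or the same reverse-triangle-inequality estimate (when $t<a+\delta$). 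The only real obstacle here is the constructive one: one cannot decide, for a given $t\in[a,b]$, whether $t<b$ or $t=b$, so the classical splitting between ``interior'' and ``endpoint'' is unavailable. The hypotheses $\gamma(a)=\gamma(b)$ and pointwise continuity at $a$ and $b$ exist precisely to absorb this missing decidability via an $\varepsilon$-$\delta$ approximation, with cotransitivity in $\mathbb{R}$ standing in for trichotomy throughout.
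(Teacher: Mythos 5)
Your proof is correct and rests on the same core technical move as the paper's: when a parameter might be at or near an endpoint, split by cotransitivity on two nearby values so that one branch places the point safely in a half-open interval where the hypothesis applies directly, while the other places it so close to the endpoint that pointwise continuity plus the reverse triangle inequality yields the required apartness. The difference is purely organizational. The paper proves the symmetric pair (ii)$\Rightarrow$(iii) and (iii)$\Rightarrow$(ii), each via this endpoint $\varepsilon$-$\delta$ argument, and then obtains (i) from (ii) and (iii) together with a one-line cotransitivity split: for $(t,t')\in[a,b]\times(a,b)$ with $t\neq t'$, either $a<t$ (put both points in $(a,b]$ and use (iii)) or $t<b$ (put both in $[a,b)$ and use (ii)). You instead close the loop with a direct (iii)$\Rightarrow$(i), repeating the $\varepsilon$-$\delta$ argument, this time at the endpoint $a$. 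Both routes perform the endpoint estimate twice; the paper's version makes the (ii)$\Leftrightarrow$(iii) symmetry explicit and renders the final step nearly cost-free, while yours is the leaner linear chain of three implications. Either organization is constructively valid.
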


\begin{proof}
If (i) holds and $t,t^{\prime}$ are points of$\ [a,b)$ with $t<t^{\prime}$,
then $a<t^{\prime}$, so $\left(  t,t^{\prime}\right)  \in\left[  a,b\right]
\times(a,b)$ and therefore $\gamma(t)\neq\gamma(t^{\prime})$. \ Hence (i)
implies (ii). Now assume (ii), and let $a<t,t^{\prime}\leq b$ and $t\neq
t^{\prime}$. Either $t\neq b$ or $t^{\prime}\neq b$; to illustrate, we take
the former case. Since $a,t\in\lbrack a,b)$ and $a\neq t$, $\gamma
(a)\neq\gamma(t)$. By the pointwise continuity of $\gamma$ at $a$ and $b$,
there exists $\delta$ with $0<\delta<b-a$ such that if $s\in\left[
a,b\right]  $ and either $\left\vert s-a\right\vert <\delta$ or $\left\vert
s-b\right\vert <\delta$, then $\left\vert \gamma(s)-\gamma(a)\right\vert
=\left\vert \gamma(s)-\gamma(b)\right\vert <\left\vert \gamma(t)-\gamma
(a)\right\vert $. Either $t^{\prime}<b$ or else $t^{\prime}>b-\delta$. In the
first case, $t,t^{\prime}\in\lbrack a,b)$, so by (ii), $\gamma(t)\neq
\gamma(t^{\prime})$. In the second case, $\left\vert t^{\prime}-b\right\vert
<\delta$, so $\left\vert \gamma(t^{\prime})-\gamma(a)\right\vert <\left\vert
\gamma(t)-\gamma(a)\right\vert $ and therefore $\gamma(t)\neq\gamma(t^{\prime
})$. It follows that (ii) implies (iii); a similar argument proves that (iii)
implies (ii). Finally, assume that (iii) and hence (ii) holds. If $\left(
t,t^{\prime}\right)  \in\lbrack a,b]\times(a,b)$ and $t\neq t^{\prime}$, then
either $a<t$, in which case $t,t^{\prime}\in(a,b]$ and therefore
$\gamma(t)\neq\gamma(t^{\prime})$, or else $t<b$, in which case $t,t^{\prime
}\in\lbrack a,b)$ and again $\gamma(t)\neq\gamma(t^{\prime})$.%
\hfill

\end{proof}%

\medskip
We state here, without the simple proof, the following:

\begin{lemma}
\label{dec23p1}Let $X,Y$ be metric spaces, $f$ a uniformly continuous
injective mapping of $X$ onto $Y$ such that $f^{-1}:Y\rightarrow X$ is
uniformly continuous. Then

\begin{enumerate}
\item[\emph{(i)}] $\left(  x_{n}\right)  _{n\geq1}$ is a Cauchy sequence in $X
$ if and only if $\left(  f(x_{n})\right)  _{n\geq1}$ is a Cauchy sequence in
$Y$;

\item[\emph{(ii)}] $X$ is complete if and only if $Y$ is complete; and

\item[\emph{(iii)}] $X$ is compact if and only if $Y$ is compact.
\end{enumerate}
\end{lemma}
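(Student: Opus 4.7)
The plan is to prove the three items in order, exploiting the symmetry between $f$ and $f^{-1}$ afforded by the hypotheses.

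For (i), I would unpack the definitions directly. Given $\varepsilon>0$, uniform continuity of $f$ yields $\delta>0$ such that $\rho_X(x,x')<\delta$ implies $\rho_Y(f(x),f(x'))<\varepsilon$; applied to a tail of a Cauchy sequence $(x_n)$ in $X$ this shows $(f(x_n))$ is Cauchy in $Y$. The reverse implication uses the same argument with $f^{-1}$ in place of $f$, relying on the surjectivity of $f$ to write every $y_n$ as $f(x_n)$.

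For (ii), suppose $X$ is complete, and let $(y_n)$ be a Cauchy sequence in $Y$. Set $x_n\equiv f^{-1}(y_n)$; by (i), $(x_n)$ is Cauchy in $X$, so it converges to some $x\in X$, and then uniform (hence pointwise) continuity of $f$ gives $y_n=f(x_n)\to f(x)$ in $Y$. The converse follows by reversing the roles of $X$ and $Y$, using the uniform continuity of $f^{-1}$.

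For (iii), recall that constructively `compact' means `complete and totally bounded'. Completeness has just been handled in (ii), so it suffices to show that total boundedness is transferred by $f$ and by $f^{-1}$. But the image of a totally bounded set under a uniformly continuous map is totally bounded: given $\varepsilon>0$, choose $\delta$ from uniform continuity of $f$, take a $\delta$-approximation $\{x_1,\dots,x_n\}$ to $X$, and observe that $\{f(x_1),\dots,f(x_n)\}$ is an $\varepsilon$-approximation to $Y=f(X)$. The reverse direction is identical with $f^{-1}$.

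There is no real obstacle here; the only thing to be careful about is the constructive reading of `compact' (complete plus totally bounded) in (iii), and making sure that (i) is used as the bridge in (ii) rather than attempting a direct $\varepsilon$--$\delta$ argument at the level of limits.
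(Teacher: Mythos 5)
Your proof is correct, and it is the natural argument: the paper explicitly states this lemma ``without the simple proof,'' so you have simply supplied what the author elided. Your handling of (iii) correctly uses the constructive definition of compactness as complete plus totally bounded, with total boundedness transferred by uniform continuity, which is precisely what is needed.
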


\begin{proposition}
\label{jan30p1}Let $\gamma:\left[  a,b\right]  \rightarrow\mathbb{C}$ be a
Jordan curve and $K$ a subset of $\left(  a,b\right)  $. Then $\gamma(K)$ is
compact if and only if $K$ is compact.
\end{proposition}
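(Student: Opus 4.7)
My plan is to reduce both implications to Lemma \ref{dec23p1}(iii) applied to the restriction $\gamma|_K : K \to \gamma(K)$. By J1, $\gamma$ is injective on $(a,b)$, so this restriction is a bijection; and since $\gamma$ is uniformly continuous on $[a,b]$, it is automatically uniformly continuous on $K$. The only nontrivial ingredient is uniform continuity of $\gamma^{-1}$ on $\gamma(K)$, which J2 delivers as soon as one can exhibit an $r > 0$ with $K \subset [a+r, b-r]$.

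For the ``if'' direction, suppose $K$ is compact. Applying Lemma \ref{nov12l-1} first to $K$ as a compact subset of $[a,b)$ and then as a compact subset of $(a,b]$ yields such an $r>0$. Then J2 provides uniform continuity of $\gamma^{-1}$ on $\gamma([a+r, b-r])$, and a fortiori on $\gamma(K)$, so Lemma \ref{dec23p1}(iii) delivers the compactness of $\gamma(K)$.

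For the ``only if'' direction, suppose $\gamma(K)$ is compact. The main obstacle is again to produce $\delta > 0$ with $K \subset [a+\delta, b-\delta]$; once this is in hand, the argument above finishes the job. To obtain $\delta$, I apply Bishop's Lemma to the complete located set $\gamma(K)$ and the point $\gamma(a)$: there exists $w \in \gamma(K)$---necessarily of the form $\gamma(s)$ with $s\in K\subset(a,b)$---such that $\gamma(a)\neq w$ implies $c\equiv\rho(\gamma(a),\gamma(K))>0$. But J1, applied to $a$ and $s$, forces $\gamma(a)\neq\gamma(s)=w$, so $c$ is genuinely positive. Now use uniform continuity of $\gamma$ on $[a,b]$ together with $\gamma(a)=\gamma(b)$ to select $\delta>0$ such that whenever $t\in[a,b]$ lies within $\delta$ of either endpoint, $|\gamma(t)-\gamma(a)|<c$. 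Since $|\gamma(t)-\gamma(a)|\geq c$ for every $t\in K$, this forces $K\subset[a+\delta,b-\delta]$, and the ``if'' argument completes the proof.
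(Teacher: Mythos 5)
Your proof is correct and follows essentially the same route as the paper: both arguments hinge on Bishop's Lemma to produce $\rho(\gamma(a),\gamma(K))>0$, then use uniform continuity of $\gamma$ to confine $K$ to $[a+\delta,b-\delta]$, invoke \textsf{J2} for uniform continuity of $\gamma^{-1}$ on $\gamma(K)$, and finish with Lemma \ref{dec23p1}. Two small remarks: in the ``if'' direction your detour through Lemma \ref{nov12l-1} is unnecessary, since \textsf{J2} applies directly to the compact set $K\subset(a,b)$; and in the ``only if'' direction you exploit $\gamma(a)=\gamma(b)$ to work with the single distance $\rho(\gamma(a),\gamma(K))$, where the paper more cautiously takes the minimum of the two (equal) distances --- a cosmetic simplification.
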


\begin{proof}
Suppose that $\gamma(K)$ is compact. By \textsf{J1}, $\gamma(t)\neq\gamma(a)$
and $\gamma(t)\neq\gamma(b)$ for each $t\in K$; whence, by Bishop's Lemma,
\[
0<m\equiv\min\left\{  \rho(\gamma(a),\gamma(K),\rho(\gamma(b),\gamma
(K)\right\}  .
\]
Since $\gamma$ is uniformly continuous on $\left[  a,b\right]  $, there exists
$\delta>0$ such that if $t,t^{\prime}\in\left[  a,b\right]  $ and $\left\vert
t-t^{\prime}\right\vert <\delta$, then $\left\vert \gamma(t)-\gamma(t^{\prime
})\right\vert <m$. It follows that $\left\vert a-\gamma^{-1}(z)\right\vert
\geq\delta$ and $\left\vert b-\gamma^{-1}(z)\right\vert \geq\delta$ for each
$z\in K$; whence $\gamma^{-1}(K)\subset\left[  a+\delta,b-\delta\right]
\subset(a,b)$. By \textsf{J2}, $\gamma^{-1}$ is uniformly continuous on
$\gamma\lbrack a+\delta,b-\delta]$ and hence on $\gamma(K)$. Applying Lemma
\ref{dec23p1} with $f=\gamma^{-1}$ and $X=\gamma(K)$, we now see that $K$ is
compact. Conversely, if $K$ is compact, then by \textsf{J}2, $\gamma^{-1}$ is
uniformly continuous on $\gamma(K)$, so we can apply Lemma \ref{dec23p1} with
$f=\gamma$ and $X=K$, to prove that $\gamma(K)$ is compact.%
\hfill

\end{proof}

\begin{lemma}
\label{dec23p2}Let $\gamma:\left[  a,b\right]  \rightarrow\mathbb{C}$ be a
Jordan curve. Then for each $\varepsilon>0$ there exists $\delta>0$ such that
if $t\in\left[  a,b\right]  $ and $\left\vert \gamma(t)-\gamma(a)\right\vert
<\delta$, then either $a\leq t<a+\varepsilon$ or $b-\varepsilon<t\leq b$.
\end{lemma}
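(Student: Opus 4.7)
The plan is to convert the statement into a uniform-separation fact about the image of a compact subinterval of $(a,b)$. Given $\varepsilon>0$, I may assume without loss of generality that $2\varepsilon<b-a$, because otherwise $[a,b]\subset[a,a+\varepsilon)\cup(b-\varepsilon,b]$ and any $\delta>0$ works. Set $K\equiv[a+\varepsilon/2,\,b-\varepsilon/2]$, a compact subinterval of $(a,b)$.

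The key step is to show that $\delta\equiv\rho(\gamma(a),\gamma(K))>0$. By Proposition \ref{jan30p1}, $\gamma(K)$ is compact, hence complete and located. By \textsf{J1}, for each $t\in K$ we have $t\neq a$ and $t\in(a,b)$, so $\gamma(a)\neq\gamma(t)$; that is, every point of $\gamma(K)$ is apart from $\gamma(a)$. Applying Bishop's Lemma to the complete located set $\gamma(K)$ and the point $\gamma(a)$ yields some $z_{0}\in\gamma(K)$ such that $\gamma(a)\neq z_{0}$ implies $\rho(\gamma(a),\gamma(K))>0$; since $\gamma(a)\neq z_{0}$ already holds, we conclude that $\delta>0$. (Alternatively, one could invoke Proposition \ref{jun06p1} applied to the restriction of $\gamma$ to $K$.)

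It remains to show that this $\delta$ does the job. Suppose $t\in[a,b]$ satisfies $|\gamma(t)-\gamma(a)|<\delta$. Using cotransitivity of the order on $\mathbb{R}$, I compare $t$ first with the pair $a+\varepsilon/2<a+\varepsilon$ and then with $b-\varepsilon<b-\varepsilon/2$. This gives three cases: either $t<a+\varepsilon$ (the desired left alternative), or $t>b-\varepsilon$ (the desired right alternative), or $a+\varepsilon/2<t<b-\varepsilon/2$. In the last case $t\in K$, so $\gamma(t)\in\gamma(K)$, whence $|\gamma(t)-\gamma(a)|\geq\delta$, contradicting the hypothesis. Hence one of the first two alternatives must hold.

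The main obstacle is minor: it is just the constructive case-splitting in the final step, where cotransitivity replaces classical trichotomy. Apart from that, the proof is an immediate assembly of \textsf{J1}, Proposition \ref{jan30p1}, and Bishop's Lemma.
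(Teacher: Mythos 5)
Your proof is correct and follows exactly the same route as the paper: take $K=[a+\varepsilon/2,\,b-\varepsilon/2]$, use Proposition \ref{jan30p1} together with \textsf{J1} and Bishop's Lemma to obtain $\delta=\rho(\gamma(a),\gamma(K))>0$, then conclude. The one constructive wrinkle is the WLOG reduction: casing on whether $2\varepsilon<b-a$ holds or not is an undecidable dichotomy for reals, so you should instead replace $\varepsilon$ by $\min\{\varepsilon,(b-a)/3\}$ and note the conclusion is monotone in $\varepsilon$ (the paper sidesteps this by silently assuming $\varepsilon<\tfrac{1}{2}(b-a)$); your explicit cotransitivity argument at the end is actually cleaner than the paper's terse ``$t\notin K$, so\ldots''.
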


\begin{proof}
Given $\varepsilon\ $with $0<\varepsilon<\frac{1}{2}(b-a)$, let $K\equiv
\left[  a+\varepsilon/2,b-\varepsilon/2\right]  $. Then $K\subset\left(
a,b\right)  $ and is compact, so by the previous lemma, $\gamma(K)\ $is
compact. Since $\gamma(a)\neq\gamma(t)$ for each $t\in K$, it follows from
Bishop's Lemma that $\delta\equiv\rho(\gamma(a),\gamma(K))>0$. If $t\in\left[
a,b\right]  $ and $\left\vert \gamma(t)-\gamma(a)\right\vert <\delta$, then
$t\notin K$, so either $t\in\lbrack a,a+\varepsilon)$ or else $t\in
(b-\varepsilon,b]$.%
\hfill

\end{proof}

\begin{proposition}
\label{nov22p2}Let $\gamma:[a,b]\rightarrow\mathbb{C}$ be a closed path
satisfying \textsf{J1}. Then the following conditions are equivalent.

\begin{enumerate}
\item[\emph{(i)}] $\gamma$ is a Jordan curve;

\item[\emph{(ii)}] $\gamma^{-1}:[a,b)\rightarrow\mathbb{C}$ is uniformly
continuous on $\gamma\lbrack a,c]$ for each $c\in(a,b)$;

\item[\emph{(iii)}] $\gamma^{-1}:(a,b]\rightarrow\mathbb{C}$ is uniformly
continuous on $\gamma\lbrack c,b]$ for each $c\in(a,b)$.
\end{enumerate}%

\noindent
If (i) holds and $K$ is a compact subset of either $[a,b)$ or $(a,b]$, then
$\gamma(K)$ is compact.
\end{proposition}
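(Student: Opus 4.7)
My plan is to prove (i) $\Rightarrow$ (ii) (the substantive direction), (ii) $\Rightarrow$ (i) (routine), and to observe that (i) $\Leftrightarrow$ (iii) follows from the entirely analogous argument with the endpoint $b$ in place of $a$ (using $\gamma(b) = \gamma(a)$). The implication (ii) $\Rightarrow$ (i) is straightforward: given a compact $K \subset (a,b)$, two applications of Lemma \ref{nov12l-1} (one for $K \subset [a,b)$ and one for $K \subset (a,b]$) yield $r,s>0$ with $K \subset [a+r, b-s]$, so the uniform continuity of $\gamma^{-1}$ on $\gamma[a, b-s]$ supplied by (ii) restricts to uniform continuity on $\gamma(K)$, establishing \textsf{J2}.

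The main obstacle is (i) $\Rightarrow$ (ii), because $[a,c]$ contains the endpoint $a$ that \textsf{J2} explicitly excludes. Fix $c \in (a,b)$ and $\varepsilon > 0$, which we may assume satisfies $\varepsilon/8 < b-c$. The strategy is to split $\gamma[a,c]$ constructively into two overlapping regions---points near $\gamma(a)$ and points bounded away from $\gamma(a)$---treating the first via Lemma \ref{dec23p2} and the second via \textsf{J2}, then patching them by a dichotomy on $|z - \gamma(a)|$. Concretely, Lemma \ref{dec23p2} with parameter $\varepsilon/8$ furnishes $\delta_1 > 0$ such that if $t \in [a,b]$ and $|\gamma(t) - \gamma(a)| < \delta_1$, then $t < a + \varepsilon/8$ or $t > b - \varepsilon/8$; for $t \in [a,c]$ the second alternative contradicts our choice of $\varepsilon$, so only the first survives. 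Uniform continuity of $\gamma$ then yields $\eta > 0$ with $|s-s'| \leq 2\eta \Rightarrow |\gamma(s) - \gamma(s')| < \delta_1/6$, and \textsf{J2} applied to the compact set $[a+\eta, c] \subset (a,b)$ produces $\delta_2 > 0$ such that $z, z' \in \gamma[a+\eta, c]$ with $|z-z'| < \delta_2$ satisfy $|\gamma^{-1}(z) - \gamma^{-1}(z')| < \varepsilon$.

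Set $\delta \equiv \min(\delta_1/6, \delta_2)$ and take $z, z' \in \gamma[a,c]$ with $|z - z'| < \delta$. By the constructive dichotomy on reals, either $|z - \gamma(a)| < \delta_1/2$ or $|z - \gamma(a)| > \delta_1/3$. In the first case $|z' - \gamma(a)| < \delta_1$ also, so both $\gamma^{-1}(z), \gamma^{-1}(z') < a + \varepsilon/8$ by the choice of $\delta_1$, giving $|\gamma^{-1}(z) - \gamma^{-1}(z')| < \varepsilon/4 < \varepsilon$. In the second case $|z' - \gamma(a)| > \delta_1/6$, and the contrapositive of the choice of $\eta$ forces both $\gamma^{-1}(z), \gamma^{-1}(z') > a + \eta$; hence both lie in $\gamma[a+\eta, c]$, and the $\delta_2$-estimate closes the argument.

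For the final sentence, suppose $K \subset [a,b)$ is compact. Lemma \ref{nov12l-1} gives $r > 0$ with $K \subset [a, b-r]$; by (ii), $\gamma^{-1}$ is uniformly continuous on $\gamma[a,b-r]$ and so on $\gamma(K)$. Since the restriction of $\gamma$ to $K$ is uniformly continuous and injective (Proposition \ref{jun12p1}) with uniformly continuous inverse, Lemma \ref{dec23p1}(iii) delivers compactness of $\gamma(K)$. The case $K \subset (a,b]$ is identical via (iii).
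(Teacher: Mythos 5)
Your proof is correct and follows essentially the same route as the paper's: both split $\gamma[a,c]$ into a region near $\gamma(a)$ (handled via Lemma \ref{dec23p2}, which confines the parameter to a small neighbourhood of $a$ once $b$ is ruled out by the bound $c<b$) and a region away from $\gamma(a)$ (handled via \textsf{J2} on a compact subinterval of $(a,b)$), patched by a constructive dichotomy on the distance to $\gamma(a)$. The one structural difference is that the paper uses a three-way alternative on the pair $\bigl(|\gamma(t)-\gamma(a)|,\ |\gamma(t')-\gamma(a)|\bigr)$, whereas you use a cleaner binary cotransitivity test on a single point $z$ and let the triangle inequality (together with the smallness of $\delta$) drag $z'$ to the same side; this is a mild streamlining. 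Two small remarks: in (ii) $\Rightarrow$ (i) only one application of Lemma \ref{nov12l-1} is needed, since the goal is $K\subset[a,c]$ for some $c\in(a,b)$, so the left-hand bound $a+r$ is never used; and in your first case the bound is actually $|\gamma^{-1}(z)-\gamma^{-1}(z')|<\varepsilon/8$, though $\varepsilon/4$ is of course still a valid (if loose) upper bound. Your explicit invocation of Proposition \ref{jun12p1} before applying Lemma \ref{dec23p1} in the final paragraph is a welcome bit of care that the paper leaves implicit.
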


\begin{proof}
Suppose that (i) holds. Consider any $c\in(a,b)$ and any $\varepsilon$ with
$0<\varepsilon<\min\left\{  c-a,b-c\right\}  $. Using Lemma
\textsf{\ref{dec23p2},} choose $\delta_{1}$ with $0<\delta_{1}<\varepsilon/3$
such that if $t\in\left[  a,b\right]  $ and $\left\vert \gamma(t)-\gamma
(a)\right\vert <3\delta_{1}$, then either $a\leq t<a+\varepsilon$ or
$b-\varepsilon<t\leq b$. Note that if $t\in\left[  a,c\right]  $ then
$t<b-\varepsilon$, so if also $\left\vert \gamma(t)-\gamma(a)\right\vert
<3\delta_{1}$, then $a\leq t<a+\varepsilon$. By the continuity of $\gamma$,
there exists $\delta_{2}$ with $0<\delta_{2}<\delta_{1}$ such that if
$t,t^{\prime}\in\left[  a,b\right]  $ and $\left\vert t-t^{\prime}\right\vert
<\delta_{2}$, then $\left\vert \gamma(t)-\gamma(t^{\prime})\right\vert
<\delta_{1}$. By \textsf{J2}, there exists $\delta_{3}\ $with $0<\delta
_{3}<\delta_{2}$ such that if $t,t^{\prime}\in\left[  a+\delta_{2},c\right]
$, and $\left\vert \gamma(t)-\gamma(t^{\prime})\right\vert <\delta_{3}$, then
$\left\vert t-t^{\prime}\right\vert <\varepsilon$. Now consider $t,t^{\prime
}\in\left[  a,c\right]  $ with $\left\vert \gamma(t)-\gamma(t^{\prime
})\right\vert <\delta_{3}$. We have three alternatives: $\left\vert
\gamma(t)-\gamma(a)\right\vert <2\delta_{1}$, or $\left\vert \gamma(t^{\prime
})-\gamma(a)\right\vert <2\delta_{1}$, or both $\left\vert \gamma
(t)-\gamma(a)\right\vert >\delta_{1}$ and $\left\vert \gamma(t^{\prime
})-\gamma(a)\right\vert >\delta_{1}$. In the first case we have $a\leq
t<a+\varepsilon$; moreover,
\[
\left\vert \gamma(t^{\prime})-\gamma(a)\right\vert \leq\left\vert
\gamma(t^{\prime})-\gamma(t)\right\vert +\left\vert \gamma(t)-\gamma
(a)\right\vert <\delta_{3}+2\delta_{1}<3\delta_{1},
\]
so $a\leq t^{\prime}<a+\varepsilon$ and therefore $\left\vert t-t^{\prime
}\right\vert <\varepsilon$. Similarly, in the second case we have $\left\vert
t-t^{\prime}\right\vert <\varepsilon$. In the third case we have $\left\vert
t-a\right\vert \geq\delta_{2}$ and $\left\vert t^{\prime}-a\right\vert
\geq\delta_{2}$, so $t,t^{\prime}\in\left[  a+\delta_{2},c\right]  $ and
$\left\vert \gamma(t)-\gamma(t^{\prime})\right\vert <\delta_{3}$; whence
$\left\vert t-t^{\prime}\right\vert <\varepsilon$. Thus $\gamma^{-1}$ is
uniformly continuous on $\gamma\lbrack a,c]$, and we have proved that (i)
implies (ii). A similar argument shows that (i) implies (iii).

Now suppose that (ii) holds, and let $K\subset\lbrack a,b)$ be compact. By
Lemma \ref{nov12l-1}, there exists $c\in\left(  a,b\right)  $ such that
$K\subset\lbrack a,c]$. By (ii), $\gamma^{-1}$ is uniformly continuous on
$\gamma\lbrack a,c]$ and hence on $\gamma(K)$. It follows from Lemma
\ref{dec23p1} that $\gamma(K)$ is compact. Moreover, taking the special case
where $K\subset(a,b)$, we see that (ii) implies \textsf{J2} and hence (i)$\,
$. Likewise, (iii) implies that if $K\subset(a,b]$ is compact, then
$\gamma(K)$ is compact, and (iii) implies (i). It follows that (i), (ii), and
(iii) are equivalent and that the final conclusion of the Proposition holds.%
\hfill

\end{proof}%

\medskip

Moving towards our discussion of the relation between the foregoing definition
of Jordan curve and that of Berg et al. in \cite{JCTthm}, for convenience we
now present some elementary properties of the exponential function. For the
first of these we define the \textbf{positive }$x$\textbf{-axis} as%
\[
X_{+}\equiv\left\{  (x,y)\in\mathbb{C}:x\geq0,\,\ y=0\right\}  .
\]
Note that $\mathbb{C}-X_{+}$ is a simply connected open subset of
$\mathbb{C}-\{0\}$, so there exists a unique differentiable branch $\lambda$
of the logarithmic function such that $\lambda(-1)=i\pi$ \cite[page 143]{BB}.
It follows immediately that for each $z\in\mathsf{U}-\{1\}$ there exists a
unique $\theta\in(0,2\pi)$ such that $z=\exp(i\theta)$: namely, $\theta
=-i\lambda(z)$.

\begin{lemma}
\label{nov18l1}Let $0<\phi<\pi$ and $K=\left\{  \exp(i\theta):\phi\leq
\theta\leq2\pi-\phi\right\}  $. Then

\begin{itemize}
\item[\emph{(i)}] $K$\ is compact and well contained in $\mathbb{C}-X_{+}$;

\item[\emph{(ii)}] for each $\varepsilon>0$ there exists $\delta>0$ such that
if $\theta,\theta^{\prime}\in\left[  \phi,2\pi-\phi\right]  $ and $\left\vert
\exp(i\theta)-\exp(i\theta^{\prime})\right\vert <\delta$, then $\left\vert
\theta-\theta^{\prime}\right\vert <\varepsilon$.
\end{itemize}
\end{lemma}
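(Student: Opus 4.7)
Plan: The key tool will be the differentiable branch $\lambda$ of the logarithm on $\mathbb{C}-X_{+}$ introduced just before the lemma, whose defining property $\lambda(\exp(i\theta))=i\theta$ for $\theta\in(0,2\pi)$ makes $-i\lambda$ a continuous inverse of $\theta\mapsto\exp(i\theta)$ on $K$. Once well-containment of $K$ in $\mathbb{C}-X_{+}$ is established, (ii) will follow almost immediately from uniform continuity of $\lambda$ on $K$; the completeness half of (i) will then be handed to us by Lemma~\ref{dec23p1}(iii).

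For the well-containment in (i), I plan to prove the uniform estimate
\[
\rho(\exp(i\theta),X_{+})\geq\sin\phi\qquad(\theta\in[\phi,2\pi-\phi]).
\]
The point $\exp(i\theta)$ lies on the unit circle, so its distance to $X_{+}$ equals $|\sin\theta|$ when $\cos\theta\geq0$ and equals $1$ when $\cos\theta\leq0$. Classically: if $\cos\theta\leq0$, then $1\geq\sin\phi$; if $\cos\theta\geq0$, then $\theta\in[\phi,\pi/2]\cup[3\pi/2,2\pi-\phi]$, where monotonicity of $\sin$ gives $|\sin\theta|\geq\sin\phi$. Constructively, I would thread between the two branches via the dichotomy \emph{``either $|\sin\theta|>\sin\phi-\eta$ or $|\sin\theta|<\sin\phi$''} for arbitrary $\eta>0$: in the first alternative $\rho\geq|\sin\theta|>\sin\phi-\eta$; in the second, $\sin^{2}\theta<\sin^{2}\phi$ forces $|\cos\theta|>|\cos\phi|$, which combined with $\cos\theta\leq\cos\phi$ (valid on $[\phi,2\pi-\phi]$) rules out the positive sign and yields $\cos\theta<0$ strictly, so $\rho=1$. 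Letting $\eta\to0$ gives the bound. Thus $K_{r}\subset\mathbb{C}-X_{+}$ for any $r<\sin\phi$. In parallel, $K$ is totally bounded as the uniformly continuous image of the compact interval $[\phi,2\pi-\phi]$.

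For (ii), since $\lambda$ is continuous on the open set $\mathbb{C}-X_{+}$ and $K\subset\subset\mathbb{C}-X_{+}$ by the previous step, the standard fact flagged in the footnote of the preliminaries shows that $\lambda$ is uniformly continuous on $K$. Given $\varepsilon>0$, a corresponding modulus $\delta>0$ will then satisfy, for all $\theta,\theta'\in[\phi,2\pi-\phi]$,
\[
|\theta-\theta'|=|\lambda(\exp(i\theta))-\lambda(\exp(i\theta'))|<\varepsilon
\]
whenever $|\exp(i\theta)-\exp(i\theta')|<\delta$, which is precisely~(ii).

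Finally, to finish the compactness statement in (i), note that (ii) exhibits the uniformly continuous inverse $-i\lambda|_{K}$ of the uniformly continuous bijection $\theta\mapsto\exp(i\theta)\colon[\phi,2\pi-\phi]\to K$, so Lemma~\ref{dec23p1}(iii) transfers compactness from the parameter interval to $K$. The main obstacle I anticipate is the constructive distance estimate: because the sign of $\cos\theta$ is not decidable, one has to arrange overlapping alternatives whose conclusions both yield the same lower bound $\sin\phi$. Once that is in hand, the logarithm $\lambda$ does essentially all of the remaining work.
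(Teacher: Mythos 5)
Your proof is correct and follows essentially the paper's plan: get well-containment via the lower bound $\rho(\exp(i\theta),X_{+})\geq\sin\phi$, observe that $K$ is totally bounded as the uniformly continuous image of a compact interval, deduce (ii) from uniform continuity of the logarithmic branch $\lambda$ on $K$, and then transfer compactness from the parameter interval via Lemma~\ref{dec23p1}(iii). The one genuine point of divergence is how the distance estimate is made constructive. You thread the undecidable sign of $\cos\theta$ with the dichotomy $|\sin\theta|>\sin\phi-\eta$ or $|\sin\theta|<\sin\phi$ (in the second branch obtaining $\cos\theta<0$ strictly from $\cos^{2}\theta>\cos^{2}\phi$ and $\cos\theta\leq\cos\phi$), and let $\eta\to0$. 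The paper instead verifies the estimate on the three overlapping closed subintervals $[\phi,\pi/2]$, $[\pi/2,3\pi/2]$, and $[3\pi/2,2\pi-\phi]$, and appeals to uniform continuity of $\theta\rightsquigarrow\rho(\exp(i\theta),X_{+})$ on $[\phi,2\pi-\phi]$ to carry the bound to the whole interval. Both are valid constructive work-arounds for the same non-decidable case split; yours is a bit more explicit at the point level, the paper's a bit more economical by offloading the glueing to continuity.
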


\begin{proof}
Since $\theta\rightsquigarrow\exp(i\theta)$ is uniformly continuous on the
compact interval $I\equiv\left[  \phi,2\pi-\phi\right]  $, $K$ is totally
bounded. If $\phi\leq\theta\leq\pi/2$ or $3\pi/2\leq\theta\leq2\pi-\phi$,
then
\[
\rho(\exp(i\theta),X_{+})=\left\vert \sin\theta\right\vert \geq\sin\phi>0.
\]
If $\pi/2\leq\theta\leq3\pi/2$, then $\rho(\exp(i\theta),X_{+})=1\geq\sin\phi
$. Since the function $\theta\rightsquigarrow\rho(\exp(i\theta),X_{+})$ is
uniformly continuous on $I$, it follows that $\rho(\exp(i\theta),X_{+}%
)\geq\sin\phi$ for each $\theta\in I$. If $\rho(z,K)\leq\frac{1}{2}\sin\phi$,
then $\rho(z,X_{+})\geq\frac{1}{2}\sin\phi$, so $K\subset\subset
\mathbb{C}-X_{+}$. Let $\lambda$ be the unique differentiable branch of the
logarithmic function on $U$ with $\lambda(-1)=i\pi$. Since $K$ is totally
bounded and well contained in $U$, $\lambda$ is uniformly continuous on $K$,
from which (ii) follows immediately. On the other hand, Proposition
\ref{dec23p1} shows that $\exp(I)$---that is, $K$--- is compact.%
\hfill

\end{proof}

\begin{corollary}
\label{dec20c1}If $0<\theta<2\pi$, then for each $\varepsilon>0$ there exists
$\delta>0$ such that if either $t,t^{\prime}\in\lbrack0,2\pi-\theta]$ or
$t,t^{\prime}\in\lbrack\theta,2\pi],$ and $\left\vert \exp(it)-\exp
(it^{\prime})\right\vert <\delta$, then $\left\vert t-t^{\prime}\right\vert
<\varepsilon$.
\end{corollary}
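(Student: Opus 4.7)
The plan is to deduce the corollary from Lemma \ref{nov18l1}(ii) by a simple rotation of the argument, exploiting the fact that multiplication by a unit complex number is an isometry of $\mathbb{C}$. Since $0<\theta<2\pi$ we have $0<\theta/2<\pi$, so we may apply Lemma \ref{nov18l1}(ii) with $\phi \equiv \theta/2$ to obtain, for the given $\varepsilon>0$, a $\delta>0$ such that
\[
s,s'\in[\theta/2,\,2\pi-\theta/2]\ \wedge\ \left\vert \exp(is)-\exp(is')\right\vert <\delta
\ \Longrightarrow\ \left\vert s-s'\right\vert <\varepsilon.
\]
I claim the same $\delta$ works for both cases in the corollary.

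For the case $t,t'\in[\theta,2\pi]$, set $s\equiv t-\theta/2$ and $s'\equiv t'-\theta/2$; then $s,s'\in[\theta/2,\,2\pi-\theta/2]$, and since $|\exp(-i\theta/2)|=1$ we have
\[
\left\vert \exp(is)-\exp(is')\right\vert =\left\vert \exp(-i\theta/2)\right\vert \left\vert \exp(it)-\exp(it')\right\vert =\left\vert \exp(it)-\exp(it')\right\vert .
\]
Thus if $|\exp(it)-\exp(it')|<\delta$, then by the lemma $|t-t'|=|s-s'|<\varepsilon$. The case $t,t'\in[0,2\pi-\theta]$ is handled identically using $s\equiv t+\theta/2$, $s'\equiv t'+\theta/2\in[\theta/2,\,2\pi-\theta/2]$, and multiplication by $\exp(i\theta/2)$.

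There is no real obstacle here: the only thing to notice is that the two asymmetric arcs considered in the corollary are congruent (via a rotation of the circle) to the symmetric arc $\{\exp(i\theta):\theta/2\leq\theta\leq 2\pi-\theta/2\}$ already treated in Lemma \ref{nov18l1}, and that both the chordal distance $|\exp(it)-\exp(it')|$ and the arc-length distance $|t-t'|$ are preserved under this rotation.
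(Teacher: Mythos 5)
Your proposal is correct and is essentially the same argument as the paper's own proof: both apply Lemma \ref{nov18l1}(ii) with $\phi=\theta/2$ and then shift $t,t'$ by $\pm\theta/2$ (equivalently rotate by $\exp(\mp i\theta/2)$), noting that the chordal distance $\left\vert\exp(it)-\exp(it')\right\vert$ is unchanged. No difference worth remarking on.
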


\begin{proof}
Given $\varepsilon>0$ and taking $\phi=\theta/2$, construct $\delta>0$ as in
Lemma \ref{nov18l1}. If $t,t^{\prime}\in\lbrack0,2\pi-\theta]$ and $\left\vert
\exp(it)-\exp(it^{\prime})\right\vert <\delta$, then $0<\theta/2<\pi$,
$t+\frac{\theta}{2}$ and $t^{\prime}+\frac{\theta}{2}$ are in $\left[
\phi,2\pi-\phi\right]  $, and
\begin{align*}
\left\vert \exp\left(  i\left(  t+\tfrac{\theta}{2}\right)  \right)
-\exp\left(  i\left(  t^{\prime}+\tfrac{\theta}{2}\right)  \right)
\right\vert  &  =\left\vert \exp\left(  \tfrac{i\theta}{2}\right)  \left(
\exp(it)-\exp(it^{\prime})\right)  \right\vert \\
&  =\left\vert \exp(it)-\exp(it^{\prime})\right\vert <\delta,
\end{align*}
so $\left\vert t-t^{\prime}\right\vert <\varepsilon$. On the other hand, if
$t,t^{\prime}\in\lbrack\theta,2\pi]$ and $\left\vert \exp(it)-\exp(it^{\prime
})\right\vert <\delta$, then $t-\frac{\theta}{2}$ and $t^{\prime}-\frac
{\theta}{2}$ belong to $[\phi,2\pi-\phi]$ and%
\begin{align*}
\left\vert \exp\left(  i\left(  t-\tfrac{\theta}{2}\right)  \right)
-\exp\left(  i\left(  t^{\prime}-\tfrac{\theta}{2}\right)  \right)
\right\vert  &  =\left\vert \exp\left(  \tfrac{-i\theta}{2}\right)  \left(
\exp(it)-\exp(it^{\prime})\right)  \right\vert \\
&  =\left\vert \exp(it)-\exp(it^{\prime})\right\vert <\delta,
\end{align*}
so again $\left\vert t-t^{\prime}\right\vert <\varepsilon$.%
\hfill

\end{proof}%

\medskip
It is a simple exercise to prove that for \emph{any} two points $z,z^{\prime}$
of $\mathbb{C}$, if $\exp(z)\neq\exp(z^{\prime})$, then $z\neq z^{\prime}$.

\begin{corollary}
\label{jul30c1}If $t,t^{\prime}\in\lbrack0,2\pi)$ or $t,t^{\prime}\in(0,2\pi
]$, and $t\neq t^{\prime}$, then $\exp(it)\neq\exp(it^{\prime})$.
\end{corollary}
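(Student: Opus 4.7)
The plan is to reduce directly to Corollary~\ref{dec20c1}, which states a uniform continuity property for the inverse of $\theta\rightsquigarrow\exp(i\theta)$ on arcs of the form $[0,2\pi-\theta]$ or $[\theta,2\pi]$. What we need to supply from the hypotheses $t,t'\in[0,2\pi)$ (resp.\ $t,t'\in(0,2\pi]$) and $t\neq t'$ is (a) a suitable $\theta\in(0,2\pi)$ that places both $t$ and $t'$ inside the relevant closed arc, and (b) a positive separation on the parameter side to feed into the corollary.

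Consider first the case $t,t'\in[0,2\pi)$. From $t\neq t'$ we have constructive comparability, so either $t<t'$ or $t>t'$; without loss of generality $t<t'$. Then $t'>t\geq 0$, so in particular $t'>0$, and since $t'<2\pi$ we have $2\pi-t'>0$. Set $\theta\equiv 2\pi-t'\in(0,2\pi)$; then $t,t'\in[0,2\pi-\theta]$. Pick $\varepsilon$ with $0<\varepsilon<t'-t$, and apply Corollary~\ref{dec20c1} to obtain $\delta>0$ such that for all $s,s'\in[0,2\pi-\theta]$, if $|\exp(is)-\exp(is')|<\delta$, then $|s-s'|<\varepsilon$. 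Since $|t-t'|=t'-t>\varepsilon$, the contrapositive yields $|\exp(it)-\exp(it')|\geq\delta>0$, whence $\exp(it)\neq\exp(it')$.

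The case $t,t'\in(0,2\pi]$ is symmetric: reducing to $t>t'$ without loss of generality gives $t'<t\leq 2\pi$, hence $t'<2\pi$ and $t'>0$, so $\theta\equiv t'\in(0,2\pi)$ lands both $t$ and $t'$ in $[\theta,2\pi]$. The same $\varepsilon/\delta$ argument, now using the second branch of Corollary~\ref{dec20c1}, gives $|\exp(it)-\exp(it')|\geq\delta>0$.

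There is no real obstacle here; the only care needed is the constructive one, namely extracting a positive $\theta$ from the strict inequalities $t'<2\pi$ (respectively $t'>0$), which is exactly why the hypothesis that both parameters lie in a \emph{half-open} interval is used. Once $\theta$ and $\varepsilon$ are in place, the quantitative Corollary~\ref{dec20c1} immediately delivers the required apartness $|\exp(it)-\exp(it')|>0$.
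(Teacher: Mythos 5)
Your proof is correct and takes essentially the same route as the paper: reduce to Corollary~\ref{dec20c1} by choosing $\theta\in(0,2\pi)$ so that both parameters lie in the relevant closed arc, then read off apartness from the quantitative $\varepsilon/\delta$ bound. The only cosmetic difference is that the paper sets $\theta=\min\{t,t'\}$ (resp. $2\pi-\max\{t,t'\}$) directly instead of first invoking the constructive comparison $t<t'$ or $t>t'$ to reduce WLOG, but after your reduction the two choices of $\theta$ coincide.
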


\begin{proof}
For example, if $t,t^{\prime}\in(0,2\pi]$, then setting $\theta=\min\left\{
t,t^{\prime}\right\}  ,$ we have $0<\theta<2\pi$ and $t,t^{\prime}\in
\lbrack\theta,2\pi]$. Taking $\varepsilon=\frac{1}{2}\left\vert t-t^{\prime
}\right\vert $ and constructing $\delta$ as in Corollary \ref{dec20c1}, we see
that $\left\vert t-t^{\prime}\right\vert >\varepsilon$ and therefore
$\left\vert \exp(it)-\exp(it^{\prime})\right\vert \geq\delta$.%
\hfill

\end{proof}%

\medskip

We now define the \textbf{unit circular path}\footnote{%
\normalfont\sf
Bishop calls $\upsilon$ the unit circle.}%
\[
\upsilon:\theta\rightsquigarrow\exp(i\theta)\ \ (0\leq\theta\leq2\pi).
\]
Although every $z\in\mathsf{U}$ can be expressed in the form $z=\exp(i\theta)$
for some $\theta\in\mathbb{R}$, the following Brouwerian example shows that
the classically true equality $\mathsf{U}=\upsilon\lbrack0,2\pi]$ is
essentially nonconstructive.

By Corollary \ref{dec20c1}, there exists $\delta>0$ such that if either
$t,t^{\prime}\in\lbrack0,3\pi/2]$ or $t,t^{\prime}\in\lbrack\pi/2,2\pi]$, and
$\left\vert \exp(it)-\exp(it^{\prime})\right\vert <\delta$, then $\left\vert
t-t^{\prime}\right\vert <\pi/2$. Let $z=(1+ia)/\sqrt{1+a^{2}}$, where $a$ is a
real number such that $\left\vert z-1\right\vert <\delta$; then $z\in
\mathsf{U}$. Suppose that $z=\exp(i\phi)$ for some $\phi\in\left[
0,2\pi\right]  $. If $a\neq0$, then $z\neq1$, so, in view of the remark
preceding Lemma \ref{nov18l1}, there exists a \emph{unique} $\theta\in\left(
0,2\pi\right)  $ such that $z=\exp(i\theta)$. If $\theta\neq\phi$, then
$\phi\notin\left(  0,2\pi\right)  $, so---as either $0<\phi$ or $\phi<2\pi
$---$\,\phi\in\left\{  0,2\pi\right\}  $ and therefore $z=1$, a contradiction.
Thus if $a\neq0$, then $\phi=\theta$ and $0<\phi<2\pi$. In general, either
$\phi>\pi/2$ or $\phi<3\pi/2$. In the first case, if $a>0$, then $\phi,2\pi
\in\lbrack\pi/2,2\pi]$ and $\left\vert \exp(i\phi)-\exp(2\pi i)\right\vert
=\left\vert z-1\right\vert <\delta$, so $\left\vert 2\pi-\phi\right\vert
<\pi/2$; whence $3\pi/2<\phi<2\pi$ and therefore $a=\operatorname{Im}%
z=\sqrt{1+a^{2}}\sin\phi<0$, a contradiction. Hence if $\phi>\pi/2$, then
$\lnot(a>0)$ and therefore $a\leq0$. A similar argument shows that if
$\phi<3\pi/2$, then $\lnot(a<0)$ and therefore $a\geq0$.\ It follows that if
$\mathsf{U}=\mathsf{car}(\upsilon)$, then we can prove that for each
$x\in\mathbb{R}$ either $x\geq0$ or $x\leq0$, a proposition which is
equivalent to the essentially nonconstructive omniscience principle\footnote{%
\normalfont\sf
See \cite[pages 9-10]{BVtech}.}

\begin{quote}
\textbf{LLPO\ \ }For each binary sequence $\left(  a_{n}\right)  _{n\geq1}$
with at most one term equal to $1$, either $a_{n}=0$ for all even $n$ or else
$a_{n}=0$ for all odd $n$.
\end{quote}%

\noindent
The best we can hope for in this context is therefore:

\begin{lemma}
\label{nov18l2}The range of $\upsilon$ is dense in $\mathsf{U}$.
\end{lemma}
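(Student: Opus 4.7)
My plan is to show that every $z\in\mathsf{U}$ can be approximated to within any prescribed $\varepsilon>0$ by a value of $\upsilon$, using a constructive dichotomy to handle the branch-cut issue at $z=1$ that made the full equality $\mathsf{U}=\mathsf{car}(\upsilon)$ essentially nonconstructive in the Brouwerian example just given.

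Fix $z\in\mathsf{U}$ and $\varepsilon>0$. I would apply the basic constructive trichotomy (comparing $|z-1|\geq 0$ with the interval $(0,\varepsilon)$): either $|z-1|<\varepsilon$, or $|z-1|>0$. In the first case, taking $\theta=0$ gives $\upsilon(\theta)=\exp(0)=1$ and $|\upsilon(\theta)-z|=|1-z|<\varepsilon$, so we are done. In the second case, $z$ belongs to the metric complement $\mathsf{U}-\{1\}$, so by the remark preceding Lemma \ref{nov18l1} there exists $\theta\in(0,2\pi)$ with $z=\exp(i\theta)=\upsilon(\theta)$, yielding $|\upsilon(\theta)-z|=0<\varepsilon$. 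Either way we have produced a point of the range of $\upsilon$ within $\varepsilon$ of $z$, so the range is dense in $\mathsf{U}$.

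The only point that deserves a word of care is the invocation of the remark: it requires $\lambda$ to be defined at $z$, i.e.\ $\rho(z,X_{+})>0$. Writing $z=a+bi$ with $a^{2}+b^{2}=1$, the hypothesis $|z-1|>0$ gives $2(1-a)=|z-1|^{2}>0$, so $a<1$, and hence $\max(a,0)<1$ (using the standard constructive rule that $\max(x,y)<c$ iff $x<c$ and $y<c$). Since one verifies easily that $\rho(z,X_{+})=\sqrt{1-\max(a,0)^{2}}$ for $z\in\mathsf{U}$, this yields $\rho(z,X_{+})>0$ as needed. This is the sole nontrivial step; everything else is a straightforward application of the existing machinery.
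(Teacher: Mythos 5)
Your proof is correct and takes essentially the same approach as the paper's one-line proof, which applies the same constructive dichotomy ($|z-1|>0$ or $|z-1|<\varepsilon$) and invokes the remark preceding Lemma \ref{nov18l1} in the first case. Your closing paragraph verifying $\rho(z,X_{+})>0$ for $z\in\mathsf{U}-\{1\}$ is a careful unpacking of a step the paper treats as immediate, but it does not change the route.
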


\begin{proof}
Either $z\neq1$ and $z=\exp(i\theta)\ $for a unique $\theta\in\left(
0,2\pi\right)  $, or else $\left\vert z-\exp(i0)\right\vert =\left\vert
z-1\right\vert <\varepsilon$.%
\hfill

\end{proof}

\begin{proposition}
\label{dec21p1}The unit circular path $\upsilon$ is a Jordan curve with
carrier $\mathsf{U}$.
\end{proposition}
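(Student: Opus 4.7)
The plan is to verify in turn each of the three things the proposition asserts: (a) $\upsilon$ is a closed path, (b) it satisfies \textsf{J1} and \textsf{J2}, and (c) its carrier equals $\mathsf{U}$. That $\upsilon$ is a continuous closed path on the parameter interval $[0,2\pi]$ is immediate from $\upsilon(0)=1=\upsilon(2\pi)$ and the uniform continuity of $\exp$ on $\{it:0\le t\le 2\pi\}$, so I would begin by recording that and then move on to \textsf{J1}.

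For \textsf{J1}, suppose $(t,t')\in[0,2\pi]\times(0,2\pi)$ with $t\ne t'$. Since $0<2\pi$, cotransitivity gives $t<2\pi$ or $t>0$. In the first case $t,t'\in[0,2\pi)$, and in the second case $t,t'\in(0,2\pi]$; either way Corollary \ref{jul30c1} delivers $\exp(it)\ne\exp(it')$, i.e.\ $\upsilon(t)\ne\upsilon(t')$.

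For \textsf{J2}, let $K\subset(0,2\pi)$ be compact. Two applications of Lemma \ref{nov12l-1} yield $r>0$ with $K\subset[r,2\pi-r]$, so in particular $0<r<\pi$. Corollary \ref{dec20c1}, applied with $\theta=r$, provides, for each $\varepsilon>0$, a $\delta>0$ such that whenever $t,t'\in[r,2\pi-r]$ and $|\exp(it)-\exp(it')|<\delta$, we have $|t-t'|<\varepsilon$. Since every point of $\upsilon(K)$ is of the form $\exp(it)$ for a \emph{unique} $t\in[r,2\pi-r]$ (uniqueness comes from \textsf{J1}, already verified), this exactly says that $\upsilon^{-1}$ is uniformly continuous on $\upsilon(K)$.

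For the carrier, note that $\mathsf{U}$ is the preimage of $\{1\}$ under the continuous map $z\mapsto|z|$, hence closed in $\mathbb{C}$, and $\upsilon([0,2\pi])\subset\mathsf{U}$ since $|\exp(i\theta)|=1$. By Lemma \ref{nov18l2} the range of $\upsilon$ is dense in $\mathsf{U}$, so taking closures gives $\mathsf{car}(\upsilon)=\overline{\upsilon([0,2\pi])}=\mathsf{U}$. The only subtle step is \textsf{J1}, where the constructive care lies in using cotransitivity to place $t$ in the appropriate half-open interval before invoking Corollary \ref{jul30c1}; the rest is routine bookkeeping with the lemmas already established.
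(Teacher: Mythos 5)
Your proof is correct and in substance follows the same route as the paper's: the same three core ingredients (Corollary \ref{jul30c1}, Corollary \ref{dec20c1}, Lemma \ref{nov18l2}) are used in the same roles. The only difference is packaging: the paper dispatches \textsf{J1} and \textsf{J2} by citing Proposition \ref{jun12p1} and Proposition \ref{nov22p2} respectively, whereas you inline the relevant special cases of their proofs---the cotransitivity dichotomy $0<t$ or $t<2\pi$ for \textsf{J1}, and the reduction of compact $K\subset(0,2\pi)$ to $[r,2\pi-r]$ via Lemma \ref{nov12l-1} for \textsf{J2}.
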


\begin{proof}
Certainly, $\upsilon$ is a closed path with parameter interval $\left[
0,2\pi\right]  $. By Corollary \ref{jul30c1} and Proposition \ref{jun12p1},
$\upsilon\ $satisfies \textsf{J1}. By Corollary \ref{dec20c1} and Proposition
\ref{nov22p2}, $\upsilon$ satisfies \textsf{J2}. Thus $\upsilon$ is a Jordan
curve. It follows from Lemma \ref{nov18l2} that $\mathsf{car}(\upsilon
)=\mathsf{U}$.%
\hfill

\end{proof}

\begin{lemma}
\label{nov25l1}Let $X,Y$ be metric spaces, and $D$ a dense subspace of $X$.
Let $F$ be a uniformly continuous mapping of $X$ into $Y$ such that

\begin{enumerate}
\item[\emph{(i)}] the restriction $f$ of $F$ to $D\ $is injective and

\item[\emph{(ii)}] $f^{-1}:f(D)\rightarrow X$ is uniformly continuous.
\end{enumerate}%

\noindent
Then $F$ is injective and $F^{-1}:F(X)\rightarrow X$ is uniformly continuous.
If also $X$ is complete, then $F(X)$ is the closure of $f(D)$ in $Y$.
\end{lemma}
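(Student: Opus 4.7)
The plan is to establish the uniform continuity of $F^{-1}$ on $F(X)$ first; the constructive injectivity of $F$ will be an immediate consequence, and the closure statement will follow by combining Lemma \ref{dec23p1} applied to $f:D\to f(D)$ with the completeness of $X$ and the density of $D$.

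For the uniform continuity of $F^{-1}$, given $\varepsilon>0$, I would chain three moduli together. Using hypothesis (ii), I first pick $\delta_{1}>0$ that converts $\delta_{1}$-closeness of images in $f(D)$ into $\varepsilon/3$-closeness in $D$. Using uniform continuity of $F$, I then pick $\delta_{2}$ with $0<\delta_{2}\leq\varepsilon/3$ such that $\rho(x,x')<\delta_{2}$ forces $\rho(F(x),F(x'))<\delta_{1}/3$. Given any $x,x'\in X$ with $\rho(F(x),F(x'))<\delta_{1}/3$, I use density of $D$ to choose approximants $p,p'\in D$ within $\delta_{2}$ of $x,x'$, respectively. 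Two triangle-inequality estimates---the first pushing $p,p'$ into the regime where hypothesis (ii) applies, giving $\rho(p,p')<\varepsilon/3$, and the second transferring this back to $x,x'$---then yield $\rho(x,x')<\varepsilon$. This is exactly what uniform continuity of $F^{-1}$ on $F(X)$ requires, once $F$ is known to be injective; but the same estimate also gives injectivity in the constructive sense, for if $\rho(x,x')>0$, taking $\varepsilon=\rho(x,x')$ shows that $\rho(F(x),F(x'))\geq\delta_{1}/3>0$, so $F(x)\neq F(x')$.

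For the final assertion, under the extra hypothesis that $X$ is complete, I would prove $F(X)=\overline{f(D)}$ by two inclusions. The inclusion $F(X)\subseteq\overline{f(D)}$ is immediate from density of $D$ and continuity of $F$: any $x\in X$ is the limit of a sequence $(d_{n})$ in $D$, and $F(d_{n})=f(d_{n})\to F(x)$. Conversely, given $y\in\overline{f(D)}$, pick $d_{n}\in D$ with $f(d_{n})\to y$; since $(f(d_{n}))$ is Cauchy, Lemma \ref{dec23p1}(i) applied to $f:D\to f(D)$ makes $(d_{n})$ Cauchy in $X$; completeness of $X$ then yields $x\in X$ with $d_{n}\to x$, and continuity of $F$ gives $F(x)=\lim f(d_{n})=y$, so $y\in F(X)$.

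The main obstacle, such as it is, is the coordinated choice of the three moduli above (for $f^{-1}$, for $F$, and for the density approximation), together with the need to keep $\delta_{2}$ small enough to serve double duty as both a modulus for $F$ and as a bound on $\rho(x,p)$. Once these are set up, both the uniform continuity of $F^{-1}$ and the constructive injectivity of $F$ fall out of the same estimate, and the closure statement is a straightforward Cauchy-completion argument via Lemma \ref{dec23p1}.
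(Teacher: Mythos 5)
Your proposal is correct and follows essentially the same route as the paper's proof: transfer estimates from $D$ to $X$ by density and triangle inequalities, observe that the resulting uniform modulus simultaneously gives injectivity and uniform continuity of $F^{-1}$, and close the argument with a Cauchy-sequence-plus-completeness step (you route it through Lemma \ref{dec23p1}(i), the paper applies the just-proved uniform continuity of $F^{-1}$ directly to a sequence in $F(X)$, but these are the same idea). The only cosmetic difference is that you set up the modulus first and extract injectivity from it, whereas the paper proves injectivity first via convergent sequences and then does the modulus argument separately.
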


\begin{proof}
Let $x,x^{\prime}\in X$ and choose sequences $\left(  x_{n}\right)  _{n\geq
1},(x_{n}^{\prime})_{n\geq1}$ in $D$ converging to $x$ and $x^{\prime}$
respectively. First consider the case where $x\neq x^{\prime}$. By (ii), there
exists $\delta>0$ such that if $z,z^{\prime}\in D$ and $\rho(f(z),f(z^{\prime
}))<\delta$, then $\rho(z,z^{\prime})<\frac{1}{2}\rho(x,x^{\prime})$. Choose
$N$ such that
\[
\max\{\rho(x_{n},x),\rho(x_{n}^{\prime}{},x^{\prime})\}<\tfrac{1}{4}%
\rho(x,x^{\prime})\ \ \ \ \left(  n\geq N\right)  .
\]
Then for such $n,$%
\[
\rho(x_{n},x_{n}^{\prime})\geq\rho(x,x^{\prime})-\rho(x_{n},x)-\rho
(x_{n}^{\prime}{},x^{\prime})>\tfrac{1}{2}\rho(x,x^{\prime}),
\]
so%
\[
\rho(F(x),F(x^{\prime}))=\lim_{n\rightarrow\infty}\rho(f(x_{n}),f(x_{n}%
^{\prime}))\geq\delta
\]
and therefore $F(x)\neq F(x^{\prime})$. Hence $F$ is injective, $F^{-1}%
:F(x)\rightsquigarrow x$ is a function from $F(X)$ onto $X$, and the
restriction of $F^{-1}$ to $f(D)$ equals $f$.

Next, let $\varepsilon>0$ and choose $\delta_{1}>0$ such that if $z,z^{\prime
}\in D$ and $\rho(f(z),f(z^{\prime}))<\delta_{1}$, then $\rho(z,z^{\prime
})<\varepsilon$. This time, consider the case where $\rho(F(x),F(x^{\prime
}))<\delta_{1}$. Since $F$ is uniformly continuous, for all sufficiently large
$N$ we have $\rho(F(x_{n}),F(x_{n}^{\prime}))=\rho(f(x_{n}),f(x_{n}^{\prime
}))<\delta_{1}$ and therefore $\rho(x_{n},x_{n}^{\prime})<\varepsilon$; so%
\[
\rho(x,x^{\prime})=\lim_{n\rightarrow\infty}\rho(x_{n},x_{n}^{\prime}%
)\leq\varepsilon.
\]
Since $\varepsilon>0$ is arbitrary, it follows that $F^{-1}$ is uniformly
continuous on $F(X)$. Now suppose that $X$ is complete, and let $y\in
\overline{F(X)}\subset Y$. Then there is a sequence $\left(  y_{n}\right)
_{n\geq1}$ in $F(X)\ $that converges to $y$. Since $F^{-1}$ is uniformly
continuous, $\left(  F^{-1}(y_{n})\right)  _{n\geq1}$ is a Cauchy sequence in
the complete space $X$ and so converges to a limit $x_{\infty}\in X$. Then
$F(x_{\infty})=\lim_{n\rightarrow\infty}F(F^{-1}(y_{n}))=y$.%
\hfill

\end{proof}

\begin{proposition}
\label{dec21p2}Let $\gamma:[a,b]\rightarrow\mathbb{C}$ be a Jordan curve, and
$F$ a continuous injective mapping of $\mathsf{car}(\gamma)$ into $\mathbb{C}$
such that $F^{-1}$ is uniformly continuous on $F(\mathsf{car}(\gamma))$. Then
$F\circ\gamma:\left[  a,b\right]  \rightarrow\mathbb{C}$ is a Jordan curve
with carrier $F(\mathsf{car}(\gamma))$.
\end{proposition}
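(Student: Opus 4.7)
The plan is to verify the three requirements in turn: that $F\circ\gamma$ is a closed path on $[a,b]$, that it satisfies \textsf{J1} and \textsf{J2}, and that its carrier equals $F(\mathsf{car}(\gamma))$. First, since $\mathsf{car}(\gamma)$ is compact, continuity of $F$ there means uniform continuity, so the composite $F\circ\gamma:[a,b]\to\mathbb{C}$ is (uniformly) continuous; and $\gamma(a)=\gamma(b)$ immediately gives $(F\circ\gamma)(a)=(F\circ\gamma)(b)$.

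Next I would verify \textsf{J1} for $F\circ\gamma$. Given $(t,t')\in[a,b]\times(a,b)$ with $t\neq t'$, \textsf{J1} for $\gamma$ gives $\gamma(t)\neq\gamma(t')$, i.e.\ $\varepsilon\equiv|\gamma(t)-\gamma(t')|>0$. Applying the uniform continuity of $F^{-1}$ on $F(\mathsf{car}(\gamma))$ with this $\varepsilon$ yields $\delta>0$ such that any pair $w,w'\in F(\mathsf{car}(\gamma))$ with $|w-w'|<\delta$ satisfies $|F^{-1}(w)-F^{-1}(w')|<\varepsilon$; taking $w=F(\gamma(t))$, $w'=F(\gamma(t'))$, we cannot have $|w-w'|<\delta$, for that would give $|\gamma(t)-\gamma(t')|<\varepsilon=|\gamma(t)-\gamma(t')|$. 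Hence $|F(\gamma(t))-F(\gamma(t'))|\geq\delta>0$. This quantitative step, replacing the unavailable classical contrapositive, is the one spot where constructive care is really needed; everything else is routine composition.

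For \textsf{J2}, let $K\subset(a,b)$ be compact. By \textsf{J2} for $\gamma$, $\gamma^{-1}$ is uniformly continuous on $\gamma(K)$. Since $F$ is injective on $\mathsf{car}(\gamma)\supset\gamma(K)$, $F^{-1}$ sends $F(\gamma(K))$ into $\gamma(K)$ and is uniformly continuous there (being uniformly continuous on the larger set $F(\mathsf{car}(\gamma))$). Therefore $(F\circ\gamma)^{-1}=\gamma^{-1}\circ F^{-1}$ is uniformly continuous on $F(\gamma(K))=(F\circ\gamma)(K)$, as required.

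Finally, for the carrier identity, observe that Lemma \ref{dec23p1}(iii), applied to $F$ as a uniformly continuous bijection from the compact set $\mathsf{car}(\gamma)$ onto $F(\mathsf{car}(\gamma))$ with uniformly continuous inverse, shows that $F(\mathsf{car}(\gamma))$ is compact, hence complete and located. Since $\gamma([a,b])$ is dense in $\mathsf{car}(\gamma)=\overline{\gamma([a,b])}$, the uniform continuity of $F$ on $\mathsf{car}(\gamma)$ transports convergent sequences, so $F(\gamma([a,b]))=(F\circ\gamma)([a,b])$ is dense in $F(\mathsf{car}(\gamma))$. Taking closures gives $\mathsf{car}(F\circ\gamma)=\overline{(F\circ\gamma)([a,b])}=F(\mathsf{car}(\gamma))$, completing the proof.
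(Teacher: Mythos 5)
Your proof is correct, and your treatment of \textsf{J2} is actually cleaner and more direct than the paper's. The paper fixes a compact $K\subset F\circ\gamma(a,b)$, applies Lemma \ref{dec23p1} to show $F^{-1}(K)$ is compact, and only then invokes \textsf{J2} for $\gamma$ (implicitly via Proposition \ref{jan30p1} to know $\gamma^{-1}(F^{-1}(K))$ is a compact subset of $(a,b)$). You instead start from compact $K\subset(a,b)$, as the definition of \textsf{J2} literally asks, and observe that $(F\circ\gamma)^{-1}=\gamma^{-1}\circ F^{-1}$ is a composite of two maps whose uniform continuity you already have in hand (the inner one because $F(\gamma(K))\subset F(\mathsf{car}(\gamma))$). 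No compactness transfer is needed. Your carrier argument matches the paper's. One small redundancy: your quantitative $\delta$--$\varepsilon$ argument for \textsf{J1} is unnecessary, because in BISH ``injective'' already means the strong statement $z\neq z'\Rightarrow F(z)\neq F(z')$ (cf.\ the way the paper uses it in Lemma \ref{nov25l1}); so \textsf{J1} for $F\circ\gamma$ is immediate from \textsf{J1} for $\gamma$ and the injectivity hypothesis, which is presumably why the paper dismisses it as straightforward. Your argument is still valid — it effectively re-derives that strong form from the uniform continuity of $F^{-1}$ — but it isn't where the real work lies.
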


\begin{proof}
It is straightforward to show that $F\circ\gamma$ is uniformly continuous,
that $F\circ\gamma(a)=F\circ\gamma(b)$, and that $F\circ\gamma$ satisfies
\textsf{J1}. Consider a compact subset $K$ of $F\circ\gamma(a,b)$. The
restriction of $F^{-1}$ to $K$ is continuous and injective, $F^{-1}%
(K)\subset\gamma(a,b)$, and the restriction of $F$ to $F^{-1}(K)$ is uniformly
continuous. Hence, by Proposition \ref{dec23p1}, $F^{-1}(K)$ is a compact
subset of $\gamma\left(  a,b\right)  $, so $\gamma^{-1}$ is uniformly
continuous on $F^{-1}(K)$, by \textsf{J2}, and therefore $\left(  F\circ
\gamma\right)  ^{-1}=\gamma^{-1}\circ F^{-1}$ is uniformly continuous on $K$.
Thus $F\circ\gamma$ satisfies \textsf{J2} and is therefore a Jordan curve.
Now, by continuity, $F\circ\gamma(a,b)$ is dense in both $F\circ
\mathsf{car}(\gamma)$ and $\mathsf{car}(F\circ\gamma)$. Since the last-named
set is compact, by Lemma \ref{dec23p1}, and therefore closed, it follows that
$F\circ\mathsf{car}(\gamma)=\mathsf{car}(F\circ\gamma)$.%

\hfill

\end{proof}%

\medskip

From this and Proposition \ref{dec21p1}, we obtain the first of two results
that, taken together, will link our and the Berg et. al notions of Jordan curve.

\begin{corollary}
\label{nov26p1}Let $F:\mathsf{U}\rightarrow\mathbb{C}$ be uniformly
continuous, injective, and such that $F^{-1}$ is uniformly continuous. Then
$F\circ\upsilon:t\rightsquigarrow F(\exp(it))\ \ (0\leq t\leq2\pi)$ is a
Jordan curve such that $\mathsf{car}(F\circ\upsilon)=F(\mathsf{U})$.
\end{corollary}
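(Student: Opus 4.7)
The plan is to invoke the two immediately preceding results in sequence; indeed, the remark in the excerpt already signals that the corollary follows ``from this and Proposition \ref{dec21p1}.''

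First, I would apply Proposition \ref{dec21p1} to conclude that $\upsilon:[0,2\pi]\to\mathbb{C}$ is a Jordan curve with $\mathsf{car}(\upsilon)=\mathsf{U}$. This gives us a Jordan curve to feed into the next step.

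Next, I would check that the hypotheses of Proposition \ref{dec21p2} are satisfied with $\gamma\equiv\upsilon$, $[a,b]\equiv[0,2\pi]$, and the given map $F$. Specifically: $\mathsf{car}(\upsilon)=\mathsf{U}$, so $F$ is defined on $\mathsf{car}(\upsilon)$; uniform continuity of $F$ on the compact set $\mathsf{U}$ implies continuity in the sense of this paper; $F$ is injective by assumption; and $F^{-1}$ is uniformly continuous on $F(\mathsf{U})=F(\mathsf{car}(\upsilon))$ by assumption. Proposition \ref{dec21p2} then yields directly that $F\circ\upsilon$ is a Jordan curve with $\mathsf{car}(F\circ\upsilon)=F(\mathsf{car}(\upsilon))=F(\mathsf{U})$, which is exactly the conclusion.

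There is no real obstacle here: the corollary is a straightforward specialisation of Proposition \ref{dec21p2} to the case where the Jordan curve is the unit circular path. The only thing worth remarking on is the translation between the notion of continuity on a compact set used in Proposition \ref{dec21p2} and the uniform continuity of $F$ on $\mathsf{U}$ that is assumed here; but these coincide by the paper's own definition of continuity on a compact set.
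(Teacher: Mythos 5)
Your proof is correct and is precisely the argument the paper intends: the paper explicitly introduces the corollary with the words ``From this and Proposition \ref{dec21p1}, we obtain\dots'', where ``this'' is Proposition \ref{dec21p2}, so combining those two propositions exactly as you do is the author's own route.
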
%

\medskip
\noindent
The second linking result is this.

\begin{theorem}
\label{nov18t1}Let $\gamma$ be a Jordan curve with parameter interval $\left[
0,2\pi\right]  $. Then

\begin{enumerate}
\item[\emph{(i)}] $f:\exp(it)\rightsquigarrow\gamma(t)$ $(0\leq t\leq2\pi
)\ $is a uniformly continuous injective mapping of the range of $\upsilon$
onto $\gamma\lbrack0,2\pi]$ with uniformly continuous inverse, and

\item[\emph{(ii)}] $f$ extends to a continuous injective mapping $F$ of
$\mathsf{U}$ onto $\mathsf{car}(\gamma)$ with uniformly continuous inverse.
\end{enumerate}
\end{theorem}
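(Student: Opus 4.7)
The plan is to construct $f$ directly via $f(\exp(it)) := \gamma(t)$ on $\upsilon[0,2\pi]$, verify the four claims in (i), then obtain (ii) by applying Lemma~\ref{nov25l1} together with the density provided by Lemma~\ref{nov18l2}. Well-definedness of $f$ reduces to showing that if $\exp(it_1) = \exp(it_2)$ with $t_1, t_2 \in [0,2\pi]$, then $\gamma(t_1) = \gamma(t_2)$; this follows from $\gamma(0) = \gamma(2\pi)$ combined with the contrapositive of Corollary~\ref{jul30c1} (applied on $[0,2\pi)$ and on $(0,2\pi]$), which confines any pair of distinct parameters with equal $\exp$-values to $\{0, 2\pi\}$.

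For injectivity, I set $d := \tfrac{1}{2}|\exp(it_1) - \exp(it_2)| > 0$ and pick $\eta > 0$ from uniform continuity of $\exp$ so that $|s-s'|<\eta$ implies $|\exp(is)-\exp(is')|<d/4$. By cotransitivity, either $|\exp(it_1)-1|>d/2$---in which case a cotransitivity split on $t_1$ against $\eta/2$ and $2\pi-\eta/2$, coupled with the uniform-continuity bound, forces $t_1 \in (\eta/2, 2\pi-\eta/2) \subset (0,2\pi)$---or $|\exp(it_1)-1|<d$, whence $|\exp(it_2)-1|>d$ by the triangle inequality and the symmetric argument places $t_2 \in (0,2\pi)$. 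Strong extensionality of $\exp$ yields $t_1 \neq t_2$, and \textsf{J1} then delivers $\gamma(t_1) \neq \gamma(t_2)$.

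Uniform continuity of $f$ and of $f^{-1}$ follow the same pattern. For $f$, given $\epsilon>0$ I pick $\alpha>0$ via uniform continuity of $\gamma$ so that $|s-s'|<2\alpha$ implies $|\gamma(s)-\gamma(s')|<\epsilon/2$, and set $c := 2\sin(\alpha/2)$, which lower-bounds $|\exp(i\theta)-1|$ on $[\alpha, 2\pi-\alpha]$. Cotransitivity on $|\exp(it_1)-1|$ against $c/3$ and $2c/3$ produces either (a) both $\exp(it_k)$ close to $1$---forcing each $t_k$ into $[0,2\alpha) \cup (2\pi-2\alpha, 2\pi]$ so $|\gamma(t_k)-\gamma(0)|<\epsilon/2$ via $\gamma(0)=\gamma(2\pi)$---or (b) both $\exp(it_k)$ bounded away from $1$, placing both $t_k$ in a compact subinterval of $(0,2\pi)$ where Corollary~\ref{dec20c1} upgrades $\exp$-closeness to parameter-closeness and uniform continuity of $\gamma$ finishes. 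The argument for $f^{-1}$ is dual: swap the roles of $\gamma$ and $\exp$, use Proposition~\ref{nov22p2} in place of Corollary~\ref{dec20c1}, and replace $c$ by $c' := \rho(\gamma(0), \gamma[\eta, 2\pi-\eta])$, which is positive by Bishop's Lemma applied to the compact set $\gamma[\eta, 2\pi-\eta]$ (compact by Proposition~\ref{jan30p1}) together with \textsf{J1}.

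For (ii), $f$ extends by the standard Cauchy-sequence construction to a uniformly continuous $F: \mathsf{U} \to \mathbb{C}$, using density of $\upsilon[0,2\pi]$ in $\mathsf{U}$ (Lemma~\ref{nov18l2}), completeness of $\mathbb{C}$, and uniform continuity of $f$. Applying Lemma~\ref{nov25l1} with $X = \mathsf{U}$ (complete as a closed subset of $\mathbb{C}$), $D = \upsilon[0,2\pi]$, and $Y = \mathbb{C}$ then gives injectivity of $F$, uniform continuity of $F^{-1}$, and $F(\mathsf{U}) = \overline{f(\upsilon[0,2\pi])} = \overline{\gamma[0,2\pi]} = \mathsf{car}(\gamma)$. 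The main constructive difficulty throughout is the wrap-around at $1 = \exp(0) = \exp(2\pi i)$: since one cannot decide whether a parameter sits near $0$, near $2\pi$, or in the interior of $[0, 2\pi]$, each uniform-continuity argument must carefully coordinate several interdependent positive thresholds so that every sub-case produced by cotransitivity admits a uniform bound.
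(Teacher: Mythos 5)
Your proposal is correct and follows essentially the same route as the paper's proof: well-definedness and injectivity of $f$ via the exponential's strong extensionality and \textsf{J1}, uniform continuity of $f$ and $f^{-1}$ by splitting into a case near the wrap-around point $1=\exp(0)=\exp(2\pi i)$ (handled by the continuity of $\gamma$ at $0=2\pi$) and a case in a compact subinterval of $(0,2\pi)$ (handled by Corollary~\ref{dec20c1}, respectively \textsf{J2}/Proposition~\ref{nov22p2}), and then extension by density plus Lemma~\ref{nov25l1}. The only noteworthy deviations are cosmetic: you prove well-definedness by deducing $t_1=0$, $t_2=2\pi$ from $\neg$-stability using Corollary~\ref{jul30c1}, where the paper invokes the cited fact that $\exp(it_1)=\exp(it_2)$ forces $t_2-t_1\in 2\pi\mathbb{Z}$; and your injectivity argument goes through an avoidable uniform-continuity/cotransitivity detour where the paper gets $t'\in(0,2\pi)$ directly from $\exp(it')\neq 1$ by strong extensionality.
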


\begin{proof}
We first show that $f$ is a well-defined function. If $t,t^{\prime}\in\left[
0,2\pi\right]  $ and $\exp(it)=\exp(it^{\prime})$, then there exists an
integer $n$ such that $t^{\prime}=t+2n\pi$ \cite[page 143]{BB}. In this case,
$n\in\left\{  -1,0,1\right\}  $, so we have three alternatives---$\,t=2\pi$
and $t^{\prime}=0\,$; $t=t^{\prime}$; $t^{\prime}=2\pi$ and $t=0\,$---in each
of which $\gamma(t)=\gamma(t^{\prime})$. Thus $f$ is indeed a function. On the
other hand, if $t,t^{\prime}\in\left[  0,2\pi\right]  $ and $\exp(it)\neq
\exp(it^{\prime})$, then $t\neq t^{\prime}$ and either $\exp(it)\neq1$ or
$\exp(it^{\prime})\neq1$. Taking, to illustrate, the latter alternative, we
have $0<t^{\prime}<2\pi$, so $(t,t^{\prime})\in\left[  0,2\pi\right]
\times(0,2\pi)$ and therefore, by \textsf{J1}\textbf{, }$t\neq t^{\prime}$;
whence $f(\exp(it))=\gamma(t)\neq\gamma(t^{\prime})=f(\exp(it^{\prime}))$.
This shows that $f$ is injective. Clearly, $f$ maps the range of $\upsilon$
onto $\gamma\left[  0,2\pi\right]  $.

To prove that $f$ is uniformly continuous on the range of $\upsilon$, given
$\varepsilon>0$ choose $\delta\in\left(  0,\pi/2\right)  $ such that if
$t,t^{\prime}\in\left[  0,2\pi\right]  $ and $\left\vert t-t^{\prime
}\right\vert <4\delta$, then $\left\vert \gamma(t)-\gamma(t^{\prime
})\right\vert <\varepsilon$. By Lemma \ref{nov18l1}, there exists $\delta
_{1}\in\left(  0,\delta\right)  $ such that if $\delta\leq t,t^{\prime}%
\leq2\pi-\delta$ and $\left\vert \exp(it)-\exp(it^{\prime})\right\vert
<\delta_{1}$, then $\left\vert t-t^{\prime}\right\vert <\delta$ and therefore
$\left\vert \gamma(t)-\gamma(t^{\prime})\right\vert <\varepsilon$. Let
$t,t^{\prime}\in\left[  0,2\pi\right]  $ and $\left\vert \exp(it)-\exp
(it^{\prime})\right\vert <\delta_{1}$. Either $t<3\delta$ or else $t>2\delta$.
In the first case, since $\left\vert t-t^{\prime}\right\vert <\delta$, we have
$t,t^{\prime}<4\delta$, so%
\[
\left\vert \gamma(t)-\gamma(t^{\prime})\right\vert \leq\left\vert
\gamma(t)-\gamma(0)\right\vert +\left\vert \gamma(t^{\prime})-\gamma
(0)\right\vert <2\varepsilon.
\]
In the case $t>2\delta$, we have $t^{\prime}>\delta$. Then either
$t>2\pi-3\delta$ or $t<2\pi-2\delta$. In the first case, $t,t^{\prime}\in
(2\pi-4\delta,2\pi)$, so%
\[
\left\vert \gamma(t)-\gamma(t^{\prime})\right\vert \leq\left\vert
\gamma(t)-\gamma(2\pi)\right\vert +\left\vert \gamma(t^{\prime})-\gamma
(2\pi)\right\vert <2\varepsilon.
\]
In the second case, $t^{\prime}<2\pi-\delta$, so $t,t^{\prime}\in\left(
\delta,2\pi-\delta\right)  $ and $\left\vert \exp(it)-\exp(it^{\prime
})\right\vert <\delta_{1}$, and therefore $\left\vert \gamma(t)-\gamma
(t^{\prime})\right\vert <\varepsilon$. We now see that%
\[
\left\vert f(\exp(it))-f(\exp(it^{\prime}))\right\vert =\left\vert
\gamma(t)-\gamma(t^{\prime})\right\vert <2\varepsilon
\]
for all $t,t^{\prime}\in\left[  0,2\pi\right]  $ with $\left\vert
\exp(it)-\exp(it^{\prime})\right\vert <\delta_{1}$. Since $\varepsilon>0$ is
arbitrary, this proves that $f$ is uniformly continuous on the range of
$\upsilon$.

Next we prove that $f^{-1}:\gamma(t)\rightsquigarrow\exp(it)$ is uniformly
continuous on $\gamma\lbrack0,2\pi]$. Again letting $\varepsilon>0$, since the
function $t\rightsquigarrow\exp(it)$ is continuous on $\left[  0,2\pi\right]
$ we can find $\delta_{2}\in(0,\pi/4)\ $such that if $t,t^{\prime}\in\left[
0,2\pi\right]  $ and $\left\vert t-t^{\prime}\right\vert <4\delta_{2}$, then
$\left\vert \exp(it)-\exp(it^{\prime})\right\vert <\varepsilon$. Let $\ J$ be
the compact interval $\left[  \delta_{2},2\pi-\delta_{2}\right]  $. By
Proposition \ref{nov22p2}, there exists $\delta_{3}>0$ such that $\left\vert
t-t^{\prime}\right\vert <\delta_{2}$ whenever $t,t^{\prime}\in J$ and
$\left\vert \gamma(t)-\gamma(t^{\prime})\right\vert <\delta_{3}$. Consider
$t,t^{\prime}\in\lbrack0,2\pi]$ with $\left\vert \gamma(t)-\gamma(t^{\prime
})\right\vert <\delta_{3}$. We have these three alternatives: $t\in
\lbrack0,3\delta_{2})\ $or $t\in\lbrack2\delta_{2},2\pi-2\delta_{2}]$ or
$t\in(2\pi-3\delta_{2},2\pi]$. If $t\in\lbrack0,3\delta_{2})$, then
$t,t^{\prime}\in\lbrack0,4\delta_{2})$, so%
\[
\left\vert \exp(it)-\exp(it^{\prime})\right\vert \leq\left\vert \exp
(it)-1\right\vert +\left\vert \exp(it^{\prime})-1\right\vert <2\varepsilon;
\]
likewise, if $t\in(2\pi-3\delta_{2},2\pi]$, then $t,t^{\prime}\in(2\pi
-4\delta_{2},2\pi]$ and $\left\vert \exp(it)-\exp(it^{\prime})\right\vert
<2\varepsilon$. If $t\in\lbrack2\delta_{2},2\pi-2\delta_{2}]$, then
$t,t^{\prime}\in J$ so, since $\left\vert \gamma(t)-\gamma(t^{\prime
})\right\vert <\delta_{3}$, we have $\left\vert t-t^{\prime}\right\vert
<\delta_{2}$ and therefore $\left\vert \exp(it)-\exp(it^{\prime})\right\vert
<\varepsilon$. Hence, $\varepsilon>0$ being arbitrary, $f^{-1}$ is uniformly
continuous on $\gamma\lbrack0,2\pi]$. This completes the proof of (i). It
follows from (i), Lemma \ref{nov18l2}, the completeness of $\mathsf{car}%
(\gamma)$, and Lemma \ref{nov25l1} that $F$ extends to a uniformly continuous
injection $F$ of $\mathsf{U}$ into $\mathsf{car}(\gamma)$ and that (since
$\mathsf{U}$ is complete) $F(\mathsf{U})=\mathsf{car}(\gamma)$. This proves
(ii).%
\hfill

\end{proof}%

\medskip

Theorem \ref{nov18t1} tells us that each Jordan curve $\gamma\ $on $\left[
0,2\pi\right]  \ $gives rise to a unique homeomorphism $\Phi(\gamma)$ of
$\mathsf{U}$ onto $\mathsf{car}(\gamma)$ such that $\Phi(\gamma)\circ
\upsilon=\gamma$. On the other hand, by Corollary \ref{nov26p1}, for each
homeomorphism $F$ of $\mathsf{U}$ onto a (perforce compact) subset of
$\mathbb{C}$, $\Psi(F)\equiv F\circ\upsilon$ defines a Jordan curve with
parameter interval $\left[  0,2\pi\right]  $ and carrier $F(\mathsf{U})$. If
$\gamma$ is a Jordan curve, then%
\[
\Psi(\Phi(\gamma))=\Phi(\gamma)\circ\upsilon=\gamma.
\]
On the other hand, if $F$ is a homeomorphism of $\mathsf{U}$ onto
$F(\mathsf{U})\subset\mathbb{C}$, then
\[
\Phi(\Psi(F))\circ\upsilon=\Psi(F)=F\circ\upsilon,
\]
so the uniformly continuous functions $\Phi\circ\Psi(F)$ and $F$ on
$\mathsf{U}$ are equal on the range of $\upsilon$; since, by Lemma
\ref{nov18l2}, that range is dense in $\mathsf{U}$, we conclude that
$\Phi\circ\Psi(F)=F$. Thus $\Phi$ is a one-one function from the set
$\mathfrak{J~}$of Jordan curves with parameter interval $\left[
0,2\pi\right]  $ onto the set $\mathfrak{H~}$of complex-valued homeomorphisms
with domain $\mathsf{U}$, and $\Psi$ is the inverse function from
$\mathfrak{H}$ onto $\mathfrak{J}$; also, by Theorem \ref{nov18t1}, for each
$\gamma\in\mathfrak{J}$ the range of $\Phi(\gamma)$ is $\mathsf{car}(\gamma)$,
and, by Corollary \ref{nov26p1}, for each $F\in\mathfrak{H}$ the carrier of
$\Psi(F)$ is the range of $F$. This provides a one-one correspondence between
our notion of Jordan curve and the Berg one, such that the carrier of a Jordan
curve equals the range of the corresponding homeomorphism of $\mathsf{U}$.
Using this correspondence and the equivalence of any given Jordan curve with
one that has parameter interval $\left[  0,2\pi\right]  $, we can---and
shall---apply the Jordan Curve Theorem and its consequences in \cite{JCTthm}
to our notion of Jordan curve.

We note here two things:

\begin{itemize}
\item[a)] The index\footnote{%
\normalfont\sf
Since it is the \emph{properties} of the index that matter to us, we give only
the relevant ones of those and refrain from distracting reader with the
detailed construction of the index given in \cite{JCTthm}.} (as defined in
\cite{JCTthm}) of a point with respect to a Berg Jordan curve depends only on
the range of the curve: if $F_{1}$ and $F_{2}$ are two Berg Jordan curves with
the same range $S$, then the index of $\zeta\in\mathbb{C}-S$ with respect to
$F_{1}$ equals the index of $\zeta$ with respect to $F_{2}$.

\item[b)] When we refer to the \textbf{index} of a point $\zeta\in
-\mathsf{car}(\gamma)$ with respect to a Jordan curve $\gamma$ (in our sense),
we mean the index of $\zeta$ with respect to the Berg Jordan curve $\left(
\mathsf{car}(\gamma),\Phi(\widehat{\gamma})\right)  $, where $\widehat{\gamma
}$ is the unique Jordan curve (in our sense) on $\left[  0,2\pi\right]  $ that
is equivalent to $\gamma$; we denote this index by $\mathsf{ind}(\zeta
;\gamma)$, in contrast to the notation $\mathsf{ind}(\zeta;\Phi
(\widehat{\gamma}))$ that Berg et al. would have used.
\end{itemize}

With all this in mind, we see from JCT that a Jordan curve (in our sense)
$\gamma$ divides $-\mathsf{car}(\gamma)$ into two path connected components:%
\begin{align*}
\mathsf{in}(\gamma) &  \equiv\left\{  z\in-\mathsf{car}(\gamma):\mathsf{ind}%
(z;\gamma)=1\right\} \\
\mathsf{out}(\gamma) &  \equiv\left\{  z\in-\mathsf{car}(\gamma):\mathsf{ind}%
(z;\gamma)=0\right\}  .
\end{align*}
Moreover, by JCT and the corollary to Proposition 4 of \cite{JCTthm}, two
points $z,z^{\prime}$ of $-\mathsf{car}(\gamma)$ have the same index if and
only if they can be joined by a polygonal path whose carrier is bounded away
from $\gamma$. A point $z\in-\mathsf{car}(\gamma)$ is said to be
\textbf{inside~}(respectively, \textbf{outside})~$\gamma$ if $z\in
\mathsf{in}(\gamma)\ $(respectively, $z\in\mathsf{out}(\gamma)$). The proof of
JCT in \cite{JCTthm} shows that all points sufficiently far away from
$\mathsf{car}(\gamma)$ have index $0$. Theorem 2 of \cite{JCTthm} shows that
for each $z\in\mathsf{car}(\gamma)$ and each $\varepsilon>0$, there exist
points $a,b$ within $\varepsilon~$of $z$ such that $\mathsf{ind}(a;\gamma)=1$
and $\mathsf{ind}(b;\gamma)=0$. It follows that $\mathsf{in}(\gamma
)\cup\mathsf{car}(\gamma)\cup\mathsf{out}(\gamma)$ is dense in $\mathbb{C}$,
that $\overline{\mathsf{in}(\gamma)}=\overline{\mathsf{in}(\gamma
)\cup\mathsf{car}(\gamma)}$, and that $\overline{\mathsf{out}(\gamma
)}=\overline{\mathsf{out}(\gamma)\cup\mathsf{car}(\gamma)}$. We call
$\overline{\mathsf{in}(\gamma)}$ the \textbf{region bounded by the Jordan
curve} $\gamma$ and say that $\gamma$ \textbf{bounds} it.

\begin{proposition}
\label{oct15p1}If $\gamma~$is a Jordan curve, $\zeta\in-\mathsf{car}(\gamma)$,
and $r=\rho(\zeta,\mathsf{car}(\gamma))$, then\footnote{\textsf{We denote the
open and closed balls in }$C\ $\textsf{with centre }$a$\textsf{\ and radius
}$r$\textsf{\ in a metric space by }$B(a,r)$\textsf{\ and }$\overline{B}%
(a,r)$\textsf{\ respectively.}} $B(\zeta,r)\subset-\mathsf{car}(\gamma)$ and
$\mathsf{ind}(z;\gamma)=\mathsf{ind}(\zeta;\gamma)$ for each $z\in B(\zeta,r)$.
\end{proposition}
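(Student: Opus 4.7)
My plan is to prove the two assertions in order, both using elementary estimates from the triangle inequality together with the characterization of equal indices quoted just before the proposition.

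For the inclusion $B(\zeta,r)\subset -\mathsf{car}(\gamma)$, I would fix $z\in B(\zeta,r)$, set $s\equiv|z-\zeta|<r$, and observe that for every $w\in\mathsf{car}(\gamma)$ the reverse triangle inequality gives
\[
|z-w|\geq|\zeta-w|-|z-\zeta|\geq r-s>0.
\]
Since $\mathsf{car}(\gamma)$ is located, this yields $\rho(z,\mathsf{car}(\gamma))\geq r-s>0$, so $z\in -\mathsf{car}(\gamma)$.

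For the equality of indices, I would exhibit a polygonal path joining $\zeta$ and $z$ whose carrier is bounded away from $\gamma$; the natural candidate is the linear path $\mathsf{lin}(\zeta,z)$. For an arbitrary point $w=(1-t)\zeta+tz$ with $0\leq t\leq 1$ and any $\xi\in\mathsf{car}(\gamma)$,
\[
|w-\xi|\geq|\zeta-\xi|-t|z-\zeta|\geq r-s>0,
\]
so $\rho(w,\mathsf{car}(\gamma))\geq r-s$ uniformly in $t$; that is, $\mathsf{car}(\mathsf{lin}(\zeta,z))$ is bounded away from $\gamma$ by the positive constant $r-s$. Appealing to the characterization recorded just before the proposition---coming from JCT together with the corollary to Proposition~4 of \cite{JCTthm}---two points of $-\mathsf{car}(\gamma)$ that are joined by a polygonal path bounded away from $\gamma$ have the same index. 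Hence $\mathsf{ind}(z;\gamma)=\mathsf{ind}(\zeta;\gamma)$.

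There is no real obstacle here: both assertions reduce to the triangle-inequality estimate that the open segment $(\zeta,z)$ sits inside the open ball $B(\zeta,r)$, which by construction is separated from $\mathsf{car}(\gamma)$ by at least $r-|z-\zeta|$. The only point requiring a moment of care is that one must use the \emph{open} ball so that the positive lower bound $r-s$ is genuinely positive, which is needed both for `well contained' and for the application of the index-equality criterion.
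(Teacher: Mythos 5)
Your proof is correct and follows essentially the same route as the paper: estimate $|(1-t)\zeta+tz-\gamma(\cdot)|\geq r-|z-\zeta|>0$ via the triangle inequality, conclude that $\mathsf{lin}(\zeta,z)$ is bounded away from $\gamma$, and invoke the JCT/Corollary-to-Proposition-4 criterion. The only cosmetic difference is that you separate the membership $z\in-\mathsf{car}(\gamma)$ as a preliminary step, whereas the paper reads it off from the same segment estimate at $t=1$.
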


\begin{proof}
Given $z\in B(\zeta,r)$, let $s=r-\left\vert z-\zeta\right\vert >0$. Then for
each $\lambda\in\left[  0,1\right]  $ and each $t\in\left[  a,b\right]  $,%
\[
\left\vert (1-\lambda)\zeta+\lambda z-\gamma(t)\right\vert \geq\left\vert
\zeta-\gamma(t)\right\vert -\lambda\left\vert z-\zeta\right\vert \geq
r-\left\vert z-\zeta\right\vert =s.
\]
Thus the polygonal path $\mathsf{lin}(\zeta,z)$ joining $\zeta$ and $z$ is
bounded away from $\mathsf{car}(\gamma)$, so $z\in-\mathsf{car}(\gamma)$ and
$\mathsf{ind}(z;\gamma)=\mathsf{ind}(\zeta;\gamma)$.%
\hfill

\end{proof}

\begin{corollary}
\label{dec02c1}If $\gamma$ is a Jordan curve, then $\mathsf{in}(\gamma)$ and
$\mathsf{out}(\gamma)$ are open in $\mathbb{C}$.
\end{corollary}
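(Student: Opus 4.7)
The plan is to read this off directly from Proposition \ref{oct15p1}; there is essentially no further work to do. Given $\zeta\in\mathsf{in}(\gamma)$, I note first that $\zeta\in-\mathsf{car}(\gamma)$, so, since $\mathsf{car}(\gamma)$ is located, $r\equiv\rho(\zeta,\mathsf{car}(\gamma))>0$. I would then invoke Proposition \ref{oct15p1} to assert that $B(\zeta,r)\subset-\mathsf{car}(\gamma)$ and that every $z\in B(\zeta,r)$ satisfies $\mathsf{ind}(z;\gamma)=\mathsf{ind}(\zeta;\gamma)=1$; hence $B(\zeta,r)\subset\mathsf{in}(\gamma)$, which is the openness condition at $\zeta$.

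The argument for $\mathsf{out}(\gamma)$ is the same, replacing the value $1$ by $0$: for $\zeta\in\mathsf{out}(\gamma)$, set $r=\rho(\zeta,\mathsf{car}(\gamma))>0$ and apply Proposition \ref{oct15p1} to conclude $B(\zeta,r)\subset\mathsf{out}(\gamma)$.

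There is no real obstacle here; the only thing worth being careful about constructively is that, in order to form $r$ and use it as a positive radius, one must know $\rho(\zeta,\mathsf{car}(\gamma))>0$, which is exactly the content of $\zeta$ being bounded away from $\gamma$, i.e.\ of $\zeta\in-\mathsf{car}(\gamma)$. Since both $\mathsf{in}(\gamma)$ and $\mathsf{out}(\gamma)$ were defined as subsets of $-\mathsf{car}(\gamma)$, this positivity is automatic, and the corollary follows in one line from Proposition \ref{oct15p1}.
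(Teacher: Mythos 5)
Your proof is correct and is precisely the immediate consequence of Proposition \ref{oct15p1} that the paper intends; the paper gives no separate proof for this corollary because it follows in exactly the one-line manner you describe. Nothing further is needed.
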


\begin{corollary}
\label{dec02c2}If $\gamma$ is a Jordan curve, then%
\[
\overline{\mathsf{in}(\gamma)}\cap\mathsf{out}(\gamma)=\varnothing
=\mathsf{in}(\gamma)\cap\overline{\mathsf{out}(\gamma)}%
\]
and $\overline{\mathsf{in}(\gamma)}\cap\overline{\mathsf{out}(\gamma
)}=\mathsf{car}(\gamma)$.
\end{corollary}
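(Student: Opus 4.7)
The plan is to prove the corollary in two stages, first nailing down the two disjointness statements and then bootstrapping them into the description of the intersection of the closures.

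For the disjointness $\overline{\mathsf{in}(\gamma)}\cap\mathsf{out}(\gamma)=\varnothing$ I would fix $z\in\mathsf{out}(\gamma)$ and set $r\equiv\rho(z,\mathsf{car}(\gamma))>0$. By Proposition \ref{oct15p1}, $B(z,r)\subset-\mathsf{car}(\gamma)$ and every $w\in B(z,r)$ satisfies $\mathsf{ind}(w;\gamma)=\mathsf{ind}(z;\gamma)=0$. Since any element of $\mathsf{in}(\gamma)$ has index $1$, we get $B(z,r)\cap\mathsf{in}(\gamma)=\varnothing$; so every point of $\mathsf{in}(\gamma)$ is at distance at least $r$ from $z$, which shows $z\notin\overline{\mathsf{in}(\gamma)}$. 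The dual conclusion $\mathsf{in}(\gamma)\cap\overline{\mathsf{out}(\gamma)}=\varnothing$ follows by exchanging the roles of $\mathsf{in}$ and $\mathsf{out}$ in the same argument.

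For the identity $\overline{\mathsf{in}(\gamma)}\cap\overline{\mathsf{out}(\gamma)}=\mathsf{car}(\gamma)$, the inclusion $\supset$ is immediate from the two identities $\overline{\mathsf{in}(\gamma)}=\overline{\mathsf{in}(\gamma)\cup\mathsf{car}(\gamma)}$ and $\overline{\mathsf{out}(\gamma)}=\overline{\mathsf{out}(\gamma)\cup\mathsf{car}(\gamma)}$ recorded in the paragraph preceding Proposition \ref{oct15p1}. For the reverse inclusion, take $z\in\overline{\mathsf{in}(\gamma)}\cap\overline{\mathsf{out}(\gamma)}$; it suffices to show $\rho(z,\mathsf{car}(\gamma))=0$, for then the compactness (hence closedness) of $\mathsf{car}(\gamma)$ forces $z\in\mathsf{car}(\gamma)$. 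Assume, to obtain a contradiction, that $\rho(z,\mathsf{car}(\gamma))>0$. Then $z\in-\mathsf{car}(\gamma)$, so $\mathsf{ind}(z;\gamma)$ is defined; and since the only possible values of the index for a Jordan curve are $0$ and $1$, the decidability of equality for integers splits us into two cases. If $\mathsf{ind}(z;\gamma)=0$ then $z\in\mathsf{out}(\gamma)$, contradicting the first disjointness; if $\mathsf{ind}(z;\gamma)=1$ then $z\in\mathsf{in}(\gamma)$, contradicting the second. Hence $\lnot\bigl(\rho(z,\mathsf{car}(\gamma))>0\bigr)$, which for the nonnegative real $\rho(z,\mathsf{car}(\gamma))$ is equivalent to $\rho(z,\mathsf{car}(\gamma))=0$, as required.

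The only delicate point, constructively, is the case split on the integer value of $\mathsf{ind}(z;\gamma)$, which is justified here by the fact that for a Jordan curve in our sense JCT (together with the vanishing of the index far from $\mathsf{car}(\gamma)$) restricts the index to the two-element set $\{0,1\}$; everything else is routine use of Proposition \ref{oct15p1} and the standard move from $\lnot(\rho>0)$ to $\rho=0$ for nonnegative reals.
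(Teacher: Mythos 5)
Your proof is correct and follows essentially the same route as the paper's. For the disjointness you use Proposition \ref{oct15p1} exactly as the paper does (presenting it in contrapositive form rather than by contradiction, which is a purely cosmetic difference), and for the characterisation of $\overline{\mathsf{in}(\gamma)}\cap\overline{\mathsf{out}(\gamma)}$ you make the decidable case split on $\mathsf{ind}(z;\gamma)\in\{0,1\}$ that the paper performs implicitly, then pass from $\lnot(\rho>0)$ to $\rho=0$ and invoke the closure identities for the reverse inclusion, just as the paper invokes Theorem 2 of \cite{JCTthm}.
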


\begin{proof}
For the first part of the conclusion, consider, for example, $\zeta
\in\mathsf{out}(\gamma)$, and let $r=\rho(\zeta,\mathsf{car}(\gamma))$. If
also $\zeta\in\overline{\mathsf{in}(\gamma)}$, then there exists $z\in
B(\zeta,r)\cap\mathsf{in}(\gamma)$; but then, by Proposition \ref{oct15p1},
$\mathsf{ind}(z;\gamma)=\mathsf{ind}(\zeta;\gamma)=0$, which is absurd. For
the remaining part of the conclusion, first note that\ if $z\in\overline
{\mathsf{in}(\gamma)}\cap\overline{\mathsf{out}(\gamma)}$ and $\rho
(z,\mathsf{car}(\gamma))>0$, then $z\in\overline{\mathsf{in}(\gamma)}%
\cap\mathsf{out}(\gamma)$ or else $z\in\mathsf{in}(\gamma)\cap\overline
{\mathsf{out}(\gamma)}$, a contradiction in either case. Hence $\rho
(z,\mathsf{car}(\gamma))=0$ and $z$ is in the closed set $\mathsf{car}(\gamma)
$. On the other hand, if $z\in\mathsf{car}(\gamma)$, then by \textsf{JCT},
there are points of $\mathsf{in}(\gamma)$ and points of $\mathsf{out}(\gamma)$
arbitrarily close to $z$, so $z\in\overline{\mathsf{in}(\gamma)}\cap
\overline{\mathsf{out}(\gamma)}$.%
\hfill

\end{proof}

\begin{proposition}
\label{mar7p1}Let $\gamma:\left[  a,b\right]  \rightarrow\mathbb{C}$ be a
Jordan curve, let $\xi,\eta$ be distinct complex numbers such that $\xi
\in\overline{\mathsf{in}(\gamma)}$ and $\eta\in\overline{\mathsf{out}(\gamma
)}$, and let $\mu~$be a path joining $\xi$ and $\eta$. Then for each
$\varepsilon>0$ there exists $t\in\left[  a,b\right]  $ such that $\rho
(\gamma(t),\mathsf{car}(\mu))<\varepsilon$.
\end{proposition}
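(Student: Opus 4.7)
\textbf{Proof plan for Proposition \ref{mar7p1}.}

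The plan is to prove this by a constructive contradiction argument based on the dichotomy for real numbers. Fix $\varepsilon>0$ and consider the function $h:[a,b]\to\mathbb{R}$ defined by $h(t)\equiv\rho(\gamma(t),\mathsf{car}(\mu))$. Since $\gamma$ is uniformly continuous and $\mathsf{car}(\mu)$ is compact (hence located), $h$ is uniformly continuous on $[a,b]$ and its infimum $m\equiv\inf_{t\in[a,b]}h(t)$ exists. Either $m<\varepsilon$, in which case we can locate some $t\in[a,b]$ with $h(t)<\varepsilon$ and we are done, or else $m>\varepsilon/2$, in which case we must derive a contradiction. So the real content is in the second case.

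Assume therefore that $\rho(\gamma(t),\mathsf{car}(\mu))>\varepsilon/2$ for all $t\in[a,b]$, which means $\mathsf{car}(\mu)$ is bounded away from $\mathsf{car}(\gamma)$ by $\varepsilon/2$. Since $\xi,\eta\in\mathsf{car}(\mu)$, this gives $\rho(\xi,\mathsf{car}(\gamma))\geq\varepsilon/2$ and $\rho(\eta,\mathsf{car}(\gamma))\geq\varepsilon/2$, so in particular $\xi,\eta\in-\mathsf{car}(\gamma)$ and their indices with respect to $\gamma$ are well defined. Because $\xi\in\overline{\mathsf{in}(\gamma)}$, we can find some $z\in\mathsf{in}(\gamma)$ with $|z-\xi|<\rho(\xi,\mathsf{car}(\gamma))$, and then Proposition \ref{oct15p1} applied at $\xi$ yields $\mathsf{ind}(\xi;\gamma)=\mathsf{ind}(z;\gamma)=1$. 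Similarly, $\mathsf{ind}(\eta;\gamma)=0$.

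The next step is to replace $\mu$ by a polygonal path $\mu'$ joining $\xi$ and $\eta$ whose carrier is also bounded away from $\mathsf{car}(\gamma)$. Let $\mu$ have parameter interval $[c,d]$; by uniform continuity, choose a partition $c=s_{0}<s_{1}<\cdots<s_{n}=d$ so that $|\mu(s)-\mu(s')|<\varepsilon/4$ whenever $s,s'$ lie in a common subinterval. Let $\mu'\equiv\mathsf{lin}(\mu(s_0),\mu(s_1))+\cdots+\mathsf{lin}(\mu(s_{n-1}),\mu(s_n))$. Every point of $\mathsf{car}(\mu')$ lies within $\varepsilon/4$ of $\mathsf{car}(\mu)$, so $\mathsf{car}(\mu')$ is bounded away from $\mathsf{car}(\gamma)$ by at least $\varepsilon/4$, and $\mu'$ joins $\xi$ to $\eta$. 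By the characterization stated after JCT (two points of $-\mathsf{car}(\gamma)$ have the same index if and only if they can be joined by a polygonal path bounded away from $\gamma$), we get $\mathsf{ind}(\xi;\gamma)=\mathsf{ind}(\eta;\gamma)$, contradicting $1\neq 0$.

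The main obstacle is the middle step: transferring the hypothesis $\xi\in\overline{\mathsf{in}(\gamma)}$ (a statement about a limit of inside points) into the concrete equality $\mathsf{ind}(\xi;\gamma)=1$. This is not automatic constructively, but is exactly what Proposition \ref{oct15p1} supplies once we know $\xi\in-\mathsf{car}(\gamma)$—which in turn is what the assumption on $\varepsilon/2$-separation gives us. The polygonal approximation and the appeal to JCT are then routine.
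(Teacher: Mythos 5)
Your proof is correct and follows essentially the same path as the paper's: reduce to the dichotomy $m<\varepsilon$ or $m>\varepsilon/2$, and in the second case derive a contradiction from the fact that $\xi$ and $\eta$ would have to have equal indexes while at the same time $\xi\in\overline{\mathsf{in}(\gamma)}$ forces index $1$ and $\eta\in\overline{\mathsf{out}(\gamma)}$ forces index $0$. The paper goes slightly more directly by invoking the Corollary to Proposition 4 of \cite{JCTthm} for the general path $\mu$ and then appealing to Corollary \ref{dec02c2} to rule out both membership alternatives; you instead make the polygonal approximation of $\mu$ explicit and compute $\mathsf{ind}(\xi;\gamma)$ and $\mathsf{ind}(\eta;\gamma)$ directly via Proposition \ref{oct15p1}, which amounts to unwinding the same cited facts by hand. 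Both variants are sound.
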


\begin{proof}
Since $\mathsf{car}(\gamma)$ and $\mathsf{car}(\mu)$ are compact and the
function $z\rightsquigarrow\rho(z,\mathsf{car}(\gamma))$ is continuous on
$\mathsf{car}(\mu)$,%
\[
m\equiv\inf\left\{  \rho(z,\mathsf{car}(\gamma)):z\in\mathsf{car}%
(\mu)\right\}
\]
exists. Given $\varepsilon>0$, we have either $m>\varepsilon/2$ or
$m<\varepsilon$. In the former case, $\mu$ is bounded away from $\mathsf{car}%
(\gamma)$, so by the Corollary to Proposition 4 of \cite{JCTthm},
$\mathsf{ind}(\xi;\gamma)=\mathsf{ind}(\eta;\gamma)$. Hence either $\xi
,\eta\in\mathsf{in}(\gamma)$ or else $\xi,\eta\in\mathsf{out}(\gamma)$. In
view of Corollary \ref{dec02c2}, the first case is ruled out since $\eta
\in\overline{\mathsf{out}(\gamma)}$, and the second is ruled out since $\xi
\in\overline{\mathsf{in}(\gamma)}$. Hence $m\not > \varepsilon/2$, so
$m<\varepsilon$ and therefore the desired $t\in\left[  a,b\right]  $ exists.%
\hfill

\end{proof}%

\medskip

Recall that in BISH a metric space is called\textbf{\ locally compact }if each
bounded subset of\textbf{\ }$X$ is contained in a compact set. Both
$\mathbb{R}$ and $\mathbb{C}$ are locally compact.

\begin{proposition}
\label{mar6p2}If $\gamma$ is a Jordan curve in $\mathbb{C}$, then

\begin{enumerate}
\item[\emph{(i)}] $\overline{\mathsf{in}(\gamma)}$ is compact and
$\overline{\mathsf{out}(\gamma)}$ is locally compact;

\item[\emph{(ii)}] $\rho(z,\mathsf{in}(\gamma))=\rho(z,\mathsf{car}(\gamma))$
for all $z\in\overline{\mathsf{out}(\gamma)}$;

\item[\emph{(iii)}] $\rho(z,\mathsf{out}(\gamma))=\rho(z,\mathsf{car}%
(\gamma))$ for all $z\in\overline{\mathsf{in}(\gamma)}$.
\end{enumerate}
\end{proposition}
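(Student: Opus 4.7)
The plan is to prove (i) first so that $\overline{\mathsf{in}(\gamma)}$ and $\overline{\mathsf{out}(\gamma)}$ are located---making $\rho(z,\mathsf{in}(\gamma))$ and $\rho(z,\mathsf{out}(\gamma))$ meaningful via $\rho(z,A)=\rho(z,\overline{A})$---and then to deduce (ii) and (iii) by a common argument.

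For the compactness of $\overline{\mathsf{in}(\gamma)}$, boundedness is immediate from the fact recalled in the preamble that all points sufficiently far from $\mathsf{car}(\gamma)$ have index $0$: this yields $R>0$ with $\mathsf{in}(\gamma)\subset \overline{B}(0,R)$, so $\overline{\mathsf{in}(\gamma)}\subset\overline{B}(0,R)$. Completeness is automatic as a closed subset of $\mathbb{C}$, so the work is total boundedness. Given $\varepsilon>0$, I would take a finite $\varepsilon/8$-net $S_0$ for the compact set $\mathsf{car}(\gamma)$ and a finite $\varepsilon/8$-net $S_1$ for $\overline{B}(0,R)$. For each $p\in S_1$, cotransitivity yields a commitment to one of $\rho(p,\mathsf{car}(\gamma))<\varepsilon/4$ or $\rho(p,\mathsf{car}(\gamma))>\varepsilon/8$; in the latter alternative $p\in -\mathsf{car}(\gamma)$, so JCT delivers $\mathsf{ind}(p;\gamma)\in\{0,1\}$. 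Let $N$ consist of $S_0$ together with those $p$ whose commitment was ``$>\varepsilon/8$'' and $\mathsf{ind}(p;\gamma)=1$. To check coverage, given $z\in\overline{\mathsf{in}(\gamma)}$ apply cotransitivity again to get either $\rho(z,\mathsf{car}(\gamma))<\varepsilon/2$---in which case $z$ lies within $\varepsilon/2+\varepsilon/8<\varepsilon$ of $S_0$---or $\rho(z,\mathsf{car}(\gamma))>\varepsilon/4$. In the second case $z$ is off the carrier, so by Corollary \ref{dec02c2} $z\notin\mathsf{out}(\gamma)$, hence $z\in\mathsf{in}(\gamma)$; choosing $p\in S_1$ with $|z-p|<\varepsilon/8$, Proposition \ref{oct15p1} forces $\mathsf{ind}(p;\gamma)=\mathsf{ind}(z;\gamma)=1$, and in either resolution of the dichotomy for $p$ the point $z$ ends up within $\varepsilon$ of $N$. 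Local compactness of $\overline{\mathsf{out}(\gamma)}$ follows by the analogous recipe: given a bounded $B\subset\overline{\mathsf{out}(\gamma)}$, enlarge to $\overline{B}(0,R')\supset B\cup\mathsf{car}(\gamma)$ and run the same construction, keeping the grid points with index $0$.

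For (ii), one inequality is automatic: since $\mathsf{car}(\gamma)\subset\overline{\mathsf{in}(\gamma)}$, we have $\rho(z,\mathsf{in}(\gamma))=\rho(z,\overline{\mathsf{in}(\gamma)})\leq \rho(z,\mathsf{car}(\gamma))$. For the reverse, set $r=\rho(z,\mathsf{car}(\gamma))$ and suppose there were $w\in\mathsf{in}(\gamma)$ with $|z-w|<r$. A triangle-inequality estimate shows that every point of $[z,w]$ has distance at least $r-|z-w|>0$ from $\mathsf{car}(\gamma)$; so $z\in -\mathsf{car}(\gamma)$ and $[z,w]$ is a polygonal path joining $z$ and $w$ bounded away from $\gamma$. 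By the polygonal-path characterisation of the index (the corollary to Proposition~4 of \cite{JCTthm}), $\mathsf{ind}(z;\gamma)=\mathsf{ind}(w;\gamma)=1$, so $z\in\mathsf{in}(\gamma)$, contradicting Corollary \ref{dec02c2} since $z\in\overline{\mathsf{out}(\gamma)}$. Hence $|z-w|\geq r$ for every $w\in\mathsf{in}(\gamma)$, giving $\rho(z,\mathsf{in}(\gamma))\geq r$. Part (iii) follows by the symmetric argument with $\mathsf{in}$ and $\mathsf{out}$ interchanged.

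The main obstacle is the bookkeeping in the $\varepsilon$-net construction of (i): constructively one must commit to one of the two cotransitivity alternatives for each grid point $p$ without knowing the exact value of $\rho(p,\mathsf{car}(\gamma))$, and the argument must succeed regardless of how those commitments fall. The constants $\varepsilon/8$, $\varepsilon/4$, and $\varepsilon/2$ are calibrated precisely so that every combination of outcomes yields coverage within $\varepsilon$.
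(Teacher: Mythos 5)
Your proof is correct but takes a genuinely different route from the paper's. You establish (i) first, by constructing an explicit finite $\varepsilon$-net for $\overline{\mathsf{in}(\gamma)}$ from the JCT index dichotomy and the polygonal-path characterisation, and then deduce (ii) and (iii) as corollaries of locatedness; the paper goes the other way, proving (ii) first. There, for $z\in\overline{\mathsf{out}(\gamma)}$ the inequality $\rho(z,\mathsf{car}(\gamma))\leq\lvert z-z'\rvert$ for all $z'\in\mathsf{in}(\gamma)$ comes from Proposition \ref{mar7p1} applied to $\mathsf{lin}(z,z')$, and the reverse estimate comes from Theorem~2 of \cite{JCTthm} (points of each index arbitrarily close to the carrier); (ii) then yields that $\rho(\cdot,\mathsf{in}(\gamma))$ exists on the dense set $\mathsf{car}(\gamma)\cup\mathsf{in}(\gamma)\cup\mathsf{out}(\gamma)$, hence $\mathsf{in}(\gamma)$ is located; boundedness is obtained by another appeal to Proposition~\ref{mar7p1}, and compactness by the standard ``closed located subset of a compact set is compact'' result rather than by a net construction. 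Your route is more self-contained and concrete at this spot — the compactness falls out of an explicit combinatorial cover, and the lower bound $\rho(z,\mathsf{in}(\gamma))\geq\rho(z,\mathsf{car}(\gamma))$ is gotten by a direct contradiction via the polygonal-path index criterion plus Corollary~\ref{dec02c2}, rather than via Proposition~\ref{mar7p1}. The paper's route is shorter because it reuses Proposition~\ref{mar7p1} and Theorem~2 of \cite{JCTthm}, which are doing the hidden work that your $\varepsilon$-net bookkeeping and contradiction argument make explicit. One small point to tighten when polishing: your one-line appeal to ``the analogous recipe'' for local compactness of $\overline{\mathsf{out}(\gamma)}$ should say explicitly that you intersect with a closed ball containing both the given bounded set and $\mathsf{car}(\gamma)$ before running the net construction with index $0$ in place of $1$, and then conclude via closedness; the paper instead deduces locatedness of $\overline{\mathsf{out}(\gamma)}$ from (iii) and invokes (4.6.3) of \cite{BB}.
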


\begin{proof}
Consider $z\in\overline{\mathsf{out}(\gamma)}$. By Proposition \ref{mar7p1},
for each $z^{\prime}\in\mathsf{in}(\gamma)$ the path $\mathsf{lin}%
(z,z^{\prime})$ comes arbitrarily close to $\mathsf{car}(\gamma)$, so there
exists $\xi\in\lbrack z,z^{\prime}]$ such that $\rho(\xi,\mathsf{car}%
(\gamma))$ is arbitrarily small; since%
\[
\rho(z,\mathsf{car}(\gamma))\leq\left\vert z-\xi\right\vert +\rho
(\xi,\mathsf{car}(\gamma))\leq\left\vert z-z^{\prime}\right\vert +\rho
(\xi,\mathsf{car}(\gamma)),
\]
it follows that $\rho(z,\mathsf{car}(\gamma))\leq\left\vert z-z^{\prime
}\right\vert $. On the other hand, for each $\varepsilon>0$ there exists
$\zeta\in\mathsf{car}(\gamma)$ such that $\left\vert z-\zeta\right\vert
<\rho(z,\mathsf{car}(\gamma))+\varepsilon/2$; in turn, by Theorem 2 of
\cite{JCTthm}, there exists $\zeta^{\prime}\in\mathsf{in}(\gamma)$ such that
$\left\vert \zeta-\zeta^{\prime}\right\vert <\varepsilon/2$ and therefore
$\left\vert z-\zeta^{\prime}\right\vert <\rho(z,\mathsf{car}(\gamma
))+\varepsilon$. It follows that $\rho(z,\mathsf{in}(\gamma))$ exists and
equals $\rho(z,\mathsf{car}(\gamma))$. This proves (ii). Moreover, it easily
follows that $\rho(z,\mathsf{in}(\gamma))$ exists for all $z$ in the dense
subset $\mathsf{car}(\gamma)\cup\mathsf{in}(\gamma)\cup\mathsf{out}(\gamma)$
of $\mathbb{C}$; whence $\mathsf{in}(\gamma)$, and therefore $\overline
{\mathsf{in}(\gamma)}$, is located in $\mathbb{C}$. To prove that
$\mathsf{in}(\gamma)$ is bounded, fix $a\in\mathsf{in}(\gamma)$ and choose
$R>0$ such that the compact set $\mathsf{car}(\gamma)$ is well contained in
$\overline{B}(a,R)$. As noted before Proposition \ref{oct15p1}, there exists
$\zeta^{\prime\prime}\in\mathsf{out}(\gamma)$ such that $\left\vert
\zeta^{\prime\prime}-a\right\vert >R$. If $z\in\mathsf{in}(\gamma)$ and
$\left\vert z-a\right\vert >R$, then we can join $z $ and $\zeta^{\prime
\prime}$ by a path that is bounded away from $\overline{B}(a,R)$; by
Proposition \ref{mar7p1}, this path must come arbitrarily close to
$\mathsf{car}(\gamma)$, which is impossible since $\mathsf{car}(\gamma
)\subset\subset\overline{B}(a,R)$. Hence $\mathsf{in}(\gamma)$ is a subset of
$\overline{B}(a,R)$. It follows that $\overline{\mathsf{in}(\gamma)}$ is a
closed, located subset of the compact set $\overline{B}(a,R)$ and is therefore
compact \cite[p. 89, Proposition 7]{Bishop}. Finally, arguments like those
used above enable us to prove that (iii) holds, that $\overline{\mathsf{out}%
(\gamma)}$ is closed and located in $\mathbb{C}$, and hence, using
\cite[(4.6.3)]{BB}, that $\overline{\mathsf{out}(\gamma)}$ is locally compact.
This completes the proof of (i).%
\hfill

\end{proof}

\begin{corollary}
\label{aug02c1}If $\gamma$ is a Jordan curve in $\mathbb{C}$, then
$\mathsf{in}(\gamma)=-\overline{\mathsf{out}(\gamma)}$ and $\mathsf{out}%
(\gamma)=-\overline{\mathsf{in}(\gamma)}$.
\end{corollary}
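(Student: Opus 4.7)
The plan is to prove both identities in parallel, using Proposition \ref{mar6p2} for one direction and \textsf{JCT} (together with the standing fact that $\mathsf{car}(\gamma)\subset\overline{\mathsf{out}(\gamma)}$ and $\mathsf{car}(\gamma)\subset\overline{\mathsf{in}(\gamma)}$ noted just before Proposition \ref{oct15p1}) for the other.

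First I would show $\mathsf{in}(\gamma)\subset-\overline{\mathsf{out}(\gamma)}$. Fix $z\in\mathsf{in}(\gamma)$. Then $z\in\overline{\mathsf{in}(\gamma)}$, so Proposition \ref{mar6p2}(iii) gives $\rho(z,\mathsf{out}(\gamma))=\rho(z,\mathsf{car}(\gamma))$, and the right-hand side is positive since $z\in-\mathsf{car}(\gamma)$. Because the distance to a set equals the distance to its closure, $\rho(z,\overline{\mathsf{out}(\gamma)})=\rho(z,\mathsf{out}(\gamma))>0$, as required. The inclusion $\mathsf{out}(\gamma)\subset-\overline{\mathsf{in}(\gamma)}$ is proved identically, using Proposition \ref{mar6p2}(ii) in place of (iii).

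Next I would establish the reverse inclusion $-\overline{\mathsf{out}(\gamma)}\subset\mathsf{in}(\gamma)$. Let $z\in-\overline{\mathsf{out}(\gamma)}$, so $\rho(z,\overline{\mathsf{out}(\gamma)})>0$. Since $\mathsf{car}(\gamma)\subset\overline{\mathsf{out}(\gamma)}$, we get $\rho(z,\mathsf{car}(\gamma))\geq\rho(z,\overline{\mathsf{out}(\gamma)})>0$, hence $z\in-\mathsf{car}(\gamma)$. By \textsf{JCT} the point $z$ then lies in one of the two components $\mathsf{in}(\gamma)$ or $\mathsf{out}(\gamma)$. The case $z\in\mathsf{out}(\gamma)$ is ruled out because it forces $\rho(z,\overline{\mathsf{out}(\gamma)})=0$, contradicting the hypothesis; so $z\in\mathsf{in}(\gamma)$. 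The inclusion $-\overline{\mathsf{in}(\gamma)}\subset\mathsf{out}(\gamma)$ is handled by the symmetric argument.

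The main thing to watch is that each step is genuinely constructive: the passage from $\rho(z,\mathsf{out}(\gamma))$ to $\rho(z,\overline{\mathsf{out}(\gamma)})$ uses the standard fact that a set and its closure have the same distance function in a located way, and the dichotomy used in the reverse direction is precisely the partition of $-\mathsf{car}(\gamma)$ into $\mathsf{in}(\gamma)\cup\mathsf{out}(\gamma)$ supplied by \textsf{JCT}. Both identities therefore hold.
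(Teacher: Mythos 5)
Your proof is correct and follows essentially the same route as the paper: the forward inclusion via Proposition \ref{mar6p2} (equality of distance to $\mathsf{out}(\gamma)$ and to $\mathsf{car}(\gamma)$), and the reverse inclusion by first obtaining $z\in-\mathsf{car}(\gamma)$ and then ruling out $z\in\mathsf{out}(\gamma)$. The only cosmetic difference is that the paper's proof of the reverse inclusion cites Corollary \ref{dec02c2} to exclude the $\mathsf{out}(\gamma)$ case, whereas you observe directly that $z\in\mathsf{out}(\gamma)$ would force $\rho(z,\overline{\mathsf{out}(\gamma)})=0$; your version is if anything slightly more streamlined.
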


\begin{proof}
By Proposition \ref{mar6p2}, for all $z\in\mathsf{in}(\gamma)$,
\[
\rho(z,\overline{\mathsf{out}(\gamma)})=\rho(z,\mathsf{out}(\gamma
))=\rho(z,\mathsf{car}(\gamma))>0;
\]
from which it follows that $\mathsf{in}(\gamma)\subset$~ $-\overline
{\mathsf{out}(\gamma)}$. On the other hand, if $z\in-\overline{\mathsf{out}%
(\gamma)}$, then $\rho(z,\mathsf{car}(\gamma))>0$, so $z\ $belongs to
$(\mathsf{in}(\gamma)\cup\mathsf{out}(\gamma))\cap$ $-\overline{\mathsf{out}%
(\gamma)}\ $and therefore, by Corollary \ref{dec02c2}, to $\mathsf{in}%
(\gamma)$; whence $-\overline{\mathsf{out}(\gamma)}\subset\mathsf{in}(\gamma)$
and therefore $\mathsf{in}(\gamma)=$~ $-\overline{\mathsf{out}(\gamma)}$. The
proof that $\mathsf{out}(\gamma)=-\overline{\mathsf{in}(\gamma)}$ is similar.%
\hfill

\end{proof}

\begin{proposition}
\label{oct29p1}Let $\gamma$ be a Jordan curve in $\mathbb{C}$. If $\zeta
\in\overline{\mathsf{in}(\gamma)}$ \emph{(}respectively, $\zeta\in
\overline{\mathsf{out}(\gamma)}$\emph{)}, then $\overline{B}(\zeta,\rho
(\zeta,\mathsf{car}(\gamma)))$ is a subset of $\overline{\mathsf{in}(\gamma)}$
(respectively, $\overline{\mathsf{out}(\gamma)}$).
\end{proposition}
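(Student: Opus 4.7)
The plan is to prove the statement for $\zeta\in\overline{\mathsf{in}(\gamma)}$; the case $\zeta\in\overline{\mathsf{out}(\gamma)}$ is entirely symmetric, using that points of $\mathsf{out}(\gamma)$ have index $0$. Set $r=\rho(\zeta,\mathsf{car}(\gamma))$ and fix $z$ with $|z-\zeta|\leq r$. Since $\mathsf{in}(\gamma)$ is located (by Proposition \ref{mar6p2}(iii) and Corollary \ref{aug02c1}), showing $z\in\overline{\mathsf{in}(\gamma)}$ amounts to producing, for each $\varepsilon>0$, a point $w\in\mathsf{in}(\gamma)$ with $|z-w|<\varepsilon$.

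The engine of the proof is a sub-lemma: \emph{if $|z'-\zeta|<r$, then $z'\in\mathsf{in}(\gamma)$}. To see this, set $\delta=r-|z'-\zeta|>0$ and use $\zeta\in\overline{\mathsf{in}(\gamma)}$ to pick $\zeta'\in\mathsf{in}(\gamma)$ with $|\zeta-\zeta'|<\delta/2$. Then
\[
\rho(\zeta',\mathsf{car}(\gamma))\geq r-\delta/2,\qquad |z'-\zeta'|\leq|z'-\zeta|+|\zeta-\zeta'|<(r-\delta)+\delta/2=r-\delta/2,
\]
so $z'\in B(\zeta',\rho(\zeta',\mathsf{car}(\gamma)))$. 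Proposition \ref{oct15p1} applied at $\zeta'$ gives $\mathsf{ind}(z';\gamma)=\mathsf{ind}(\zeta';\gamma)=1$, i.e., $z'\in\mathsf{in}(\gamma)$.

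To deduce the full conclusion (where $|z-\zeta|$ may equal $r$), fix $\varepsilon>0$ and apply the constructive dichotomy: either $r>0$ or $r<\varepsilon/2$. In the case $r<\varepsilon/2$, any $w\in\mathsf{in}(\gamma)$ with $|\zeta-w|<\varepsilon/2$ satisfies $|z-w|\leq|z-\zeta|+|\zeta-w|<r+\varepsilon/2<\varepsilon$. In the case $r>0$, interpolate: let $\eta=\min\{1,\varepsilon/(2r)\}\in(0,1]$ and $z_\eta=\zeta+(1-\eta)(z-\zeta)$. Then $|z_\eta-z|=\eta|z-\zeta|\leq\eta r\leq\varepsilon/2$, while $|z_\eta-\zeta|=(1-\eta)|z-\zeta|\leq(1-\eta)r<r$ (strictly, since $\eta>0$ and $r>0$). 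The sub-lemma gives $z_\eta\in\mathsf{in}(\gamma)$, so $w=z_\eta$ suffices.

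The only mild obstacle is the constructive handling of the borderline case $|z-\zeta|=r$ combined with the fact that we cannot decide $r=0$ versus $r>0$: the interpolation argument requires $r>0$ to scale $\eta$, while $r=0$ forces us to rely directly on $\zeta\in\overline{\mathsf{in}(\gamma)}$. The dichotomy $r>0$ or $r<\varepsilon/2$ neatly splits these two regimes without requiring a decision. For the outside case, one repeats the argument verbatim with $\mathsf{out}$ in place of $\mathsf{in}$, using that Proposition \ref{oct15p1} equally transports the value $\mathsf{ind}=0$ through the open ball $B(\zeta',\rho(\zeta',\mathsf{car}(\gamma)))$.
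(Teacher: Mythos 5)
Your proof is correct, but it follows a genuinely different route from the paper's. The paper leans on Proposition \ref{mar6p2}(i) to assert that $\overline{\mathsf{in}(\gamma)}$ is compact and hence located, so that $\rho(z,\overline{\mathsf{in}(\gamma)})$ is a well-defined real number; it then supposes $\rho(z,\overline{\mathsf{in}(\gamma)})>0$, observes that this forces $z\neq\zeta$ and hence $r>0$ and $\zeta\in\mathsf{in}(\gamma)$, applies Proposition \ref{oct15p1} at $\zeta$ itself to get $B(\zeta,r)\subset\mathsf{in}(\gamma)$ and so $\overline{B}(\zeta,r)\subset\overline{\mathsf{in}(\gamma)}$, and derives a contradiction; since $\neg(\rho>0)$ implies $\rho=0$ for a nonnegative real, $z\in\overline{\mathsf{in}(\gamma)}$. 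Your argument instead works directly from the definition of closure: for each $\varepsilon>0$ you exhibit a point of $\mathsf{in}(\gamma)$ within $\varepsilon$ of $z$, handling the borderline $|z-\zeta|=r$ and the indeterminacy of $r=0$ versus $r>0$ with the cotransitivity split $r>0$ or $r<\varepsilon/2$, and applying Proposition \ref{oct15p1} at a nearby interior point $\zeta'$ rather than at $\zeta$. The payoffs differ: the paper's proof is very short given Proposition \ref{mar6p2}(i), whereas yours is more self-contained (it uses only Proposition \ref{oct15p1} and the fact that $\zeta\in\overline{\mathsf{in}(\gamma)}$) and avoids proof by contradiction. One small inefficiency in your write-up: you invoke locatedness of $\mathsf{in}(\gamma)$ at the outset, but it is never used — your direct approximation argument establishes $z\in\overline{\mathsf{in}(\gamma)}$ straight from the definition of closure without any appeal to locatedness; incidentally, \ref{mar6p2}(iii) concerns $\mathsf{out}(\gamma)$ rather than $\mathsf{in}(\gamma)$, so the citation would be off in any case.
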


\begin{proof}
By Proposition \ref{mar6p2}(i), $\overline{\mathsf{in}(\gamma)}$ is compact
and hence located. Let $\zeta\in\overline{\mathsf{in}(\gamma)}$ and
$r=\rho(\zeta,\mathsf{car}(\gamma))$. Given $z\in\overline{B}(\zeta,r)$,
suppose that $\rho(z,\overline{\mathsf{in}(\gamma)})>0$. Then $z\neq\zeta$, so
$r>0$, $\zeta\in-\mathsf{car}(\gamma)$, and therefore, by Proposition
\ref{oct15p1}, $B(\zeta,r)\subset\mathsf{in}(\gamma)$; whence $z\in
\overline{\mathsf{in}(\gamma)}$, a contradiction. Hence $\rho(z,\overline
{\mathsf{in}(\gamma)})=0$ and therefore $z\in\overline{\mathsf{in}(\gamma)}$.
Thus $\overline{B}(\zeta,\rho(\zeta,\mathsf{car}(\gamma)))\subset
\overline{\mathsf{in}(\gamma)}$. A similar argument proves that if $\zeta
\in\overline{\mathsf{out}(\gamma)}$, then $\overline{B}(\zeta,\rho
(\zeta,\mathsf{car}(\gamma)))\subset\overline{\mathsf{out}(\gamma)} $.%
\hfill

\end{proof}

\begin{corollary}
\label{mar6l3}Let $\gamma$ be a Jordan curve, $\zeta\in\mathsf{in}(\gamma)$,
and $r=\rho(\zeta,\mathsf{car}(\gamma))$. Then for each $\varepsilon>0$ there
exists $z\in\overline{\mathsf{in}(\gamma)}$ such that $\left\vert
z-\zeta\right\vert =r$ and $\rho(z,\mathsf{car}(\gamma))<\varepsilon$.
\end{corollary}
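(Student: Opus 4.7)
The plan is to exhibit $z$ explicitly as the point of the closed ball $\overline{B}(\zeta,r)$ that lies on the ray from $\zeta$ towards a near-optimal approximant to the distance $r$. The two ingredients I will combine are Proposition \ref{oct29p1} (which guarantees that every point of $\overline{B}(\zeta,r)$ automatically lies in $\overline{\mathsf{in}(\gamma)}$) and the defining property of $r$ as the infimum $\rho(\zeta,\mathsf{car}(\gamma))$.

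First I would note that because $\zeta\in\mathsf{in}(\gamma)\subset-\mathsf{car}(\gamma)$, we have $r>0$; this is what lets the construction below make sense. Given $\varepsilon>0$, by the definition of $r$ as an infimum over $\mathsf{car}(\gamma)$ there exists $\zeta'\in\mathsf{car}(\gamma)$ with $r\leq|\zeta-\zeta'|<r+\varepsilon$. Setting
\[
t\equiv\frac{r}{|\zeta'-\zeta|}\in(0,1],\qquad z\equiv\zeta+t(\zeta'-\zeta),
\]
a direct calculation gives $|z-\zeta|=t|\zeta'-\zeta|=r$ and
\[
|z-\zeta'|=(1-t)|\zeta'-\zeta|=|\zeta'-\zeta|-r<\varepsilon.
\]
Hence $\rho(z,\mathsf{car}(\gamma))\leq|z-\zeta'|<\varepsilon$.

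To conclude $z\in\overline{\mathsf{in}(\gamma)}$, I would simply appeal to Proposition \ref{oct29p1}: since $\zeta\in\overline{\mathsf{in}(\gamma)}$ and $|z-\zeta|=r=\rho(\zeta,\mathsf{car}(\gamma))$, we have $z\in\overline{B}(\zeta,\rho(\zeta,\mathsf{car}(\gamma)))\subset\overline{\mathsf{in}(\gamma)}$, as required.

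There is no serious obstacle here; the only constructive point worth flagging is the strict positivity of $r$ and of $|\zeta'-\zeta|$, which is what licenses the division defining $t$. Everything else is a routine computation followed by an invocation of the previous proposition.
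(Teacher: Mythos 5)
Your proof is correct and follows essentially the same route as the paper: approximate the infimum $r$ by a point of $\mathsf{car}(\gamma)$, scale along the segment from $\zeta$ to get a point at distance exactly $r$, and invoke Proposition~\ref{oct29p1} to place that point in $\overline{\mathsf{in}(\gamma)}$. The paper phrases the approximant as $\gamma(t)$ for $t$ in the parameter interval rather than an abstract $\zeta'\in\mathsf{car}(\gamma)$, but the construction and computation are identical.
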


\begin{proof}
Given $\varepsilon>0$, find $t\ $in the parameter interval of $\gamma$ such
that $0<r\leq\left\vert \zeta-\gamma(t)\right\vert <r+\varepsilon$. Let
$\alpha=r/\left\vert \zeta-\gamma(t)\right\vert $ and $z=(1-\alpha
)\zeta+\alpha\gamma(t)$. Then $\left\vert z-\zeta\right\vert =\alpha\left\vert
\zeta-\gamma(t)\right\vert =r$, so $z\in\overline{B}(\zeta,r)$ and therefore,
by Proposition \ref{oct29p1}, $z\in\overline{\mathsf{in}(\gamma)}$. On the
other hand,%
\[
\left\vert z-\gamma(t)\right\vert =\left(  1-\alpha\right)  \left\vert
\zeta-\gamma(t)\right\vert =\left\vert \zeta-\gamma(t)\right\vert
-r<\varepsilon,
\]
so $\rho(z,\mathsf{car}(\gamma))<\varepsilon$.%
\hfill

\end{proof}%

\medskip

If $K$ is a compact subset of $\mathbb{C}$, and $f:K\rightarrow\mathbb{C}$ is
continuous, we define%
\[
m(f,K)\equiv\inf\{\left\vert f(z)\right\vert :z\in K\}\text{ \ and
\ }\left\Vert f\right\Vert _{K}\equiv\sup\{\left\vert f(z)\right\vert :z\in
K\},
\]
which exist since $f(K)$ is totally bounded. If also $B$ is a totally bounded
subset of $K$ such that $\overline{B}(z,\rho(z,B))\subset K$ for all $z\in K$,
then $B$ is called a \textbf{border} for $K$. The unit circle $\mathsf{U}$ is
clearly a border for the unit disc $\overline{B}(0,1)$. Borders are important
for the constructive \textbf{maximum modulus principle}, which we state in an
extended form:

\begin{quote}
\emph{If }$B$\emph{\ is a border for the compact set }$K\subset\mathbb{C}%
$\emph{, and }$f:K\rightarrow\mathbb{C}$\emph{\ is a differentiable function,
then }$\left\Vert f\right\Vert _{K}=\left\Vert f\right\Vert _{B}$\emph{. If
also }$m(f,B)>0$\emph{, then either }$m(f,K)=m(f,B)$\emph{\ or }$m(f,K)=0$
\cite[(5.5.2) and (5.5.3)]{BB}.
\end{quote}

From Proposition \ref{oct29p1} we obtain:

\begin{corollary}
\label{nov02p2}If $\gamma$ is a Jordan curve, then $\mathsf{car}(\gamma)$ is a
border for $\overline{\mathsf{in}(\gamma)}$.
\end{corollary}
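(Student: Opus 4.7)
The plan is to check the three requirements in the definition of \emph{border} one by one for $B\equiv\mathsf{car}(\gamma)$ and $K\equiv\overline{\mathsf{in}(\gamma)}$.

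First I would note that $\mathsf{car}(\gamma)$ is totally bounded since, as a carrier of a path, it is compact. Next I would verify the containment $\mathsf{car}(\gamma)\subset\overline{\mathsf{in}(\gamma)}$; this is already observed in the discussion following JCT in the text, where it is recorded that $\overline{\mathsf{in}(\gamma)}=\overline{\mathsf{in}(\gamma)\cup\mathsf{car}(\gamma)}$ (a consequence of Theorem~2 of \cite{JCTthm}, which furnishes points of $\mathsf{in}(\gamma)$ arbitrarily close to any given $z\in\mathsf{car}(\gamma)$).

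The remaining, and only substantive, condition is that $\overline{B}(z,\rho(z,\mathsf{car}(\gamma)))\subset\overline{\mathsf{in}(\gamma)}$ for every $z\in\overline{\mathsf{in}(\gamma)}$; but this is exactly the content of Proposition~\ref{oct29p1} applied to $\zeta=z\in\overline{\mathsf{in}(\gamma)}$. (Here we use that, by Proposition~\ref{mar6p2}(i), $\overline{\mathsf{in}(\gamma)}$ is compact and hence located, so $\rho(z,\mathsf{car}(\gamma))$ is a well-defined nonnegative real.) I do not anticipate a real obstacle: all the work has been done in Proposition~\ref{oct29p1} and the paragraph after Corollary~\ref{dec02c2}, and the proof amounts to assembling these facts.
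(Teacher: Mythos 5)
Your proof is correct and follows exactly the route the paper takes: the paper derives this corollary immediately from Proposition~\ref{oct29p1}, which handles the substantive ball-containment condition. You simply spell out the two minor remaining checks (total boundedness of $\mathsf{car}(\gamma)$ and the inclusion $\mathsf{car}(\gamma)\subset\overline{\mathsf{in}(\gamma)}$) that the paper leaves implicit.
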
%

\medskip

The next lemma will lead us to a strengthening of the Corollary to Proposition
4 of \cite{JCTthm}.

\begin{lemma}
\label{aug16l1}A pointwise continuous function $f:\left[  0,1\right]
\rightarrow\left\{  0,1\right\}  $ is constant.
\end{lemma}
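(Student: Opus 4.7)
My plan is to prove the contrapositive-style statement: for any $t,t'\in[0,1]$, we have $f(t)=f(t')$. I would argue by contradiction, assuming $f(t)\neq f(t')$ for some $t<t'$ in $[0,1]$, and then constructing a nested sequence of intervals whose shrinking to a point conflicts with pointwise continuity.

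The key observation enabling the constructive argument is that the codomain $\{0,1\}$ has decidable equality: given any $s\in[0,1]$, the value $f(s)$ is either $0$ or $1$, so we can compare it with any other value of $f$. Under the assumption $f(t)\neq f(t')$, we have $|f(t)-f(t')|=1$, and I bisect the interval $[a_0,b_0]\equiv[t,t']$ as follows: letting $c_n$ be the midpoint of the current interval $[a_n,b_n]$, decide whether $f(c_n)=f(a_n)$ or $f(c_n)=f(b_n)$ (one of these must hold since $f(a_n),f(b_n),f(c_n)\in\{0,1\}$ and $f(a_n)\neq f(b_n)$). Set $[a_{n+1},b_{n+1}]$ to be whichever half-interval has distinct values of $f$ at its endpoints. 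Then $b_n-a_n=(t'-t)2^{-n}$ and $f(a_n)\neq f(b_n)$ for all $n$.

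By the completeness of $[0,1]$, both $(a_n)$ and $(b_n)$ converge to a common point $s^{*}\in[t,t']$. Applying pointwise continuity of $f$ at $s^{*}$, I choose $\delta>0$ such that $|f(s)-f(s^{*})|<1/2$ whenever $s\in[0,1]$ and $|s-s^{*}|<\delta$; since $f$ is $\{0,1\}$-valued, this forces $f(s)=f(s^{*})$ for all such $s$. For $n$ large enough that $b_n-a_n<\delta/2$ and $|a_n-s^{*}|<\delta/2$, both $a_n$ and $b_n$ lie in the $\delta$-neighbourhood of $s^{*}$, so $f(a_n)=f(s^{*})=f(b_n)$, contradicting the construction. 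This forces $f(t)=f(t')$, proving that $f$ is constant.

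There is no serious obstacle here. The only subtlety is ensuring the bisection is legitimate constructively, which is justified by the decidability of equality in $\{0,1\}$. This is precisely where the discreteness of the codomain is used, and is what makes the result work despite the absence of any uniform continuity hypothesis or classical connectedness.
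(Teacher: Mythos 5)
Your proof is correct and follows essentially the same interval-halving argument as the paper: bisect repeatedly using decidability of equality in $\{0,1\}$ to keep distinct endpoint values, obtain a common limit point, and contradict pointwise continuity there. The paper phrases it as "fix $x$, suppose $f(x)=1-f(0)$" rather than your "suppose $f(t)\neq f(t')$," but the mechanism is identical, including the final appeal to decidability of $\{0,1\}$ to pass from $\lnot(f(t)\neq f(t'))$ to $f(t)=f(t')$.
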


\begin{proof}
Fix $x\in\left[  0,1\right]  $ and suppose that $f(x)=1-f(0)$. Then $x>0$
since $f$ is continuous at $0$.$\ $An interval-halving argument enables us to
construct sequences $\left(  a_{n}\right)  _{n\geq1}$ and $\left(
b_{n}\right)  _{n\geq1}$ in $\left[  0,x\right]  $ such that for each $n$,
$f(a_{n})=f(0)$, $f(b_{n})=1-f(0)$, $\left\vert a_{n+1}-a_{n}\right\vert
\leq2^{-n}x$, $\left\vert b_{n+1}-b_{n}\right\vert \leq2^{-n}x$, and
$b_{n}-a_{n}=2^{-n-1}x$. It follows that $\left(  a_{n}\right)  _{n\geq1}$ and
$\left(  b_{n}\right)  _{n\geq1}$ are Cauchy sequences in $[0,x]$ and so
converge to limits $a_{\infty},b_{\infty}\in\left[  0,x\right]  $
respectively; clearly $a_{\infty}=b_{\infty}$. By the pointwise continuity of
$f$, $f(a_{\infty})=\lim_{n\rightarrow\infty}f(a_{n})=f(0)$ and $f(a_{\infty
})=\lim_{n\rightarrow\infty}f(b_{n})=1-f(0)$, which is absurd. Hence
$f(x)\neq1-f(0)$ and therefore $f(x)=f(0)$.%
\hfill

\end{proof}%

\medskip

\begin{proposition}
\label{aug16p2}Let $\gamma:\left[  a,b\right]  \rightarrow\mathbb{C}$ be a
Jordan curve, and $\sigma$ a pointwise continuous mapping of a proper compact
interval $\left[  a,b\right]  $ into $-\mathsf{car}(\gamma)$. Then
$\mathsf{ind}(\sigma(t);\gamma)=\mathsf{ind}(\sigma(a);\gamma)$ for each
$t\in\left[  a,b\right]  $.
\end{proposition}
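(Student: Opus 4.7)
The plan is to show that the composite function $f:[a,b]\rightarrow\{0,1\}$ defined by $f(t)\equiv\mathsf{ind}(\sigma(t);\gamma)$ is pointwise continuous, and then invoke (a trivial rescaling of) Lemma \ref{aug16l1}. Two preliminary observations make this a short argument. First, by JCT together with the definitions of $\mathsf{in}(\gamma)$ and $\mathsf{out}(\gamma)$, every point of $-\mathsf{car}(\gamma)$ has index either $0$ or $1$; since $\sigma$ maps $[a,b]$ into $-\mathsf{car}(\gamma)$, the function $f$ does indeed take values only in $\{0,1\}$. Second, Proposition \ref{oct15p1} tells us that the index function is locally constant on $-\mathsf{car}(\gamma)$: for any $z\in-\mathsf{car}(\gamma)$ and any point of the open ball of radius $\rho(z,\mathsf{car}(\gamma))$ about $z$, the index agrees with that of $z$.

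Fix $t_{0}\in[a,b]$ and write $z_{0}\equiv\sigma(t_{0})$ and $r\equiv\rho(z_{0},\mathsf{car}(\gamma))$. Since $z_{0}\in-\mathsf{car}(\gamma)$, we have $r>0$. By the pointwise continuity of $\sigma$ at $t_{0}$, there exists $\delta>0$ such that $\left\vert \sigma(t)-z_{0}\right\vert <r$ whenever $t\in[a,b]$ and $\left\vert t-t_{0}\right\vert <\delta$; for such $t$, $\sigma(t)\in B(z_{0},r)$, so Proposition \ref{oct15p1} yields $f(t)=\mathsf{ind}(\sigma(t);\gamma)=\mathsf{ind}(z_{0};\gamma)=f(t_{0})$. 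Hence $f$ is actually locally constant at $t_{0}$, and in particular pointwise continuous there.

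Lemma \ref{aug16l1} applies directly to a pointwise continuous $\{0,1\}$-valued function on $[0,1]$; the analogous statement on $[a,b]$ follows at once by composing with the affine bijection $s\mapsto a+s(b-a)$, which preserves pointwise continuity. Applying this to $f$, we conclude $f(t)=f(a)$ for every $t\in[a,b]$, which is precisely the assertion of the proposition.

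There is no real obstacle in this argument beyond recognising that Proposition \ref{oct15p1} provides the crucial local constancy of the index along any pointwise continuous trajectory in $-\mathsf{car}(\gamma)$, and that $\{0,1\}$-valued pointwise continuous functions on a compact interval are constant (Lemma \ref{aug16l1}); note that it is essential here that the index of a point off a Jordan curve is always $0$ or $1$, so that Lemma \ref{aug16l1} is applicable without appeal to any form of decidability for $\mathbb{N}$.
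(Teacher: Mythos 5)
Your proof is correct and takes essentially the same route as the paper's: both use Proposition~\ref{oct15p1} to show that $t\rightsquigarrow\mathsf{ind}(\sigma(t);\gamma)$ is a pointwise continuous (indeed locally constant) map into $\{0,1\}$, and both then invoke Lemma~\ref{aug16l1} after reducing to the parameter interval $[0,1]$.
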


\begin{proof}
We may assume that $a=0$ and $b=1$. Given $t\in\left[  0,1\right]  $, let $r=$
$\rho(\sigma(t),\mathsf{car}(\gamma))>0$ and choose $\delta>0$ such that
$\left\vert \sigma(t^{\prime})-\sigma(t)\right\vert <r\ $whenever $t^{\prime
}\in\left[  0,1\right]  $ and $\left\vert t-t^{\prime}\right\vert <\delta$.
For such $t^{\prime}$ we have $\sigma(t^{\prime})\subset B(\sigma(t),r)$ and
therefore, by Proposition \ref{oct15p1}, $\mathsf{ind}(\sigma(t^{\prime
});\gamma)=\mathsf{ind}(\sigma(t),\gamma)$. Hence $t\rightsquigarrow
\mathsf{ind}(\sigma(t);\gamma)$ is a pointwise continuous mapping of $\left[
0,1\right]  $ into $\left\{  0,1\right\}  $. It remains to apply Lemma
\ref{aug16l1}.%
\hfill

\end{proof}%

\medskip

To see the constructive significance of this result, we define a \textbf{link
joining points}\emph{\ }$z,z^{\prime}\in\mathbb{C}$ to be a \emph{pointwise}
continuous mapping $\sigma\ $of a proper compact interval $\left[  a,b\right]
$ into $\mathbb{C}$ such that $\sigma(a)=z$ and $\sigma(b)=z^{\prime}%
$.\ Proposition \ref{aug16p2} shows that if two points of $\mathbb{C}$ are
joined by a link $\sigma\ $each of whose points lies off $\gamma$, then all
points of $\sigma$ belong to the same connected component of $-\mathsf{car}%
(\gamma)$. In contrast, JCT gives the same conclusion under the stronger
hypothesis that $\sigma$ is uniformly continuous and bounded away from
$\gamma$.\footnote{%
\normalfont\sf
Classically, if $\sigma\lbrack0,1]\subset-\mathsf{car}(\gamma)$, then $\sigma$
is bounded away from $\gamma$, since the function $t\rightsquigarrow
\rho(\sigma(t),\mathsf{car}(\gamma))$ is uniformly continuous on $[0,1]$ and
so has positive infimum. Constructively, we cannot use this argument, since,
in the recursive interpretation of Bishop's constructive mathematics, there is
an example of a uniformly continuous, positive-valued mapping on $\left[
0,1\right]  $ whose infimum is $0$; see Chapter 6 of \cite{BR}, and in
particular Corollary (2.9) of that chapter.}

\section{Crossing a Jordan curve}

Let $L_{1},L_{2}\ $be lines in the complex plane. We say that $L_{1}%
\ $\textbf{intersects} $L_{2}$ \textbf{uniquely} at the point $\zeta\in
L_{1}\cap L_{2}$ if $\rho(z,L_{2})>0$ for each $z\in L_{1}-\left\{
\zeta\right\}  $. This is the case if and only if the (supplementary) angles
between $L_{1}$ and $L_{2}$ are positive$.$

\begin{lemma}
\label{dec27l1}Let $L$ be a line in $\mathbb{C}$, and $\zeta\in\mathbb{C}$.
Then there exists $a\in L$ such that if $\zeta\neq a$, then $\rho(\zeta,L)>0$.
\end{lemma}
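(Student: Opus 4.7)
The plan is to imitate the proof of Lemma \ref{jan01l1}, adjusting the auxiliary compact set so as to dispense with the hypothesis that $\zeta$ is distinct from some given point of $L$. First I would pick any $\zeta_{0}\in L$. The goal is to identify a compact subset $A\subset L$ containing the potential foot of the perpendicular from $\zeta$ to $L$, prove $\rho(\zeta,L)=\rho(\zeta,A)$, and then apply Bishop's Lemma to $A$ to extract the required point $a$.

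Concretely, I propose to take
\[
A\equiv\{z\in L:\left\vert z-\zeta_{0}\right\vert \leq2\left\vert \zeta-\zeta_{0}\right\vert +1\},
\]
which, being a bounded closed subset of the line $L$, is compact and therefore complete and located; note that $\zeta_{0}\in A$, so $\rho(\zeta,A)\leq\left\vert \zeta-\zeta_{0}\right\vert $. The central step is to show $\rho(\zeta,L)=\rho(\zeta,A)$. For each $z\in L$, the constructive trichotomy applied to the (unconditional) strict inequality $2\left\vert \zeta-\zeta_{0}\right\vert <2\left\vert \zeta-\zeta_{0}\right\vert +1$ gives either $\left\vert z-\zeta_{0}\right\vert <2\left\vert \zeta-\zeta_{0}\right\vert +1$ (so $z\in A$) or $\left\vert z-\zeta_{0}\right\vert >2\left\vert \zeta-\zeta_{0}\right\vert $; in the latter case the triangle inequality gives
\[
\left\vert z-\zeta\right\vert \geq\left\vert z-\zeta_{0}\right\vert -\left\vert \zeta-\zeta_{0}\right\vert >\left\vert \zeta-\zeta_{0}\right\vert \geq\rho(\zeta,A).
\]
Hence $\rho(\zeta,L)$ exists and equals $\rho(\zeta,A)$. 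Bishop's Lemma applied to $A$ then yields $a\in A$ with the property that $\zeta\neq a$ implies $\rho(\zeta,A)=\rho(\zeta,L)>0$.

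The only point demanding care, and the one genuine departure from Lemma \ref{jan01l1}, lies in the trichotomy step. That earlier proof used the set $\{z\in L:\left\vert z-\zeta_{0}\right\vert \leq3\left\vert \zeta-\zeta_{0}\right\vert \}$, and its case split rested on the gap between $2\left\vert \zeta-\zeta_{0}\right\vert $ and $3\left\vert \zeta-\zeta_{0}\right\vert $\textemdash a gap that collapses precisely when we cannot assume $\left\vert \zeta-\zeta_{0}\right\vert >0$. Replacing the multiplicative bound $3\left\vert \zeta-\zeta_{0}\right\vert $ by the affine bound $2\left\vert \zeta-\zeta_{0}\right\vert +1$ supplies a uniform positive gap valid with no hypothesis on $\left\vert \zeta-\zeta_{0}\right\vert $, and this is the only new ingredient needed.
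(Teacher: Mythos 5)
Your proof is correct, but it takes a genuinely different route from the paper's. You generalize the compact-truncation-plus-Bishop's-Lemma pattern of Lemma \ref{jan01l1}, and your replacement of the multiplicative bound $3\left\vert \zeta-\zeta_{0}\right\vert$ by the affine bound $2\left\vert \zeta-\zeta_{0}\right\vert +1$ is exactly what is needed to keep the cotransitivity case split alive when $\left\vert \zeta-\zeta_{0}\right\vert$ may vanish. The paper's own proof is shorter and more concrete: it rotates and translates so that $L$ is the $x$-axis, takes $a=(\operatorname{Re}\zeta,0)$ (the foot of the perpendicular), and observes that then $\left\vert \zeta-a\right\vert=\rho(\zeta,L)=\left\vert \operatorname{Im}\zeta\right\vert$, so $\zeta\neq a$ literally \emph{is} the statement $\rho(\zeta,L)>0$. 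What the paper's approach buys is an explicit witness $a$ and no appeal to Bishop's Lemma at all; what your approach buys is uniformity with Lemma \ref{jan01l1} and a template that would transfer to more general located sets where no explicit nearest-point formula is available. Both are valid; for a line, the direct computation is the lighter tool.
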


\begin{proof}
Rotating and translating if necessary, we may assume that $L$ is the $x$-axis.
In that case, setting $a=(\operatorname{Re}\zeta,0)$, we see that $a\in L$ and
that if $\zeta\neq a$, then $\rho(\zeta,L)=\left\vert \operatorname{Im}%
\zeta\right\vert >0$.%
\hfill

\end{proof}

\begin{proposition}
\label{dec27p1}Let $L_{1},L_{2}$ be lines in $\mathbb{C}$ such that $L_{1}$
intersects $L_{2}$ uniquely at $\zeta\in L_{1}\cap L_{2}$. Then $L_{2}$
intersects $L_{1}$ uniquely at $\zeta$.
\end{proposition}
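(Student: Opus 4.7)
The plan is to show that an arbitrary point $w\in L_{2}-\{\zeta\}$ satisfies $\rho(w,L_{1})>0$, using Lemma~\ref{jan01l1} applied to $L_{1}$ together with the hypothesis on $L_{1}\cap L_{2}$ at $\zeta$. Set $c\equiv |w-\zeta|>0$. By Lemma~\ref{jan01l1} (taking $L=L_{1}$, $\zeta_{0}=\zeta$, and the external point to be $w$), we obtain a point $w^{\prime}\in L_{1}$ with $|w^{\prime}-\zeta|\leq 3c$ such that: if $w\neq w^{\prime}$, then $\rho(w,L_{1})>0$. So it suffices to prove $w\neq w^{\prime}$.

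This is where the argument becomes delicate: constructively we cannot decide whether $w^{\prime}=\zeta$ or $w^{\prime}\neq \zeta$. The trick is to use the comparability of real numbers to make the case split on the real number $|w^{\prime}-\zeta|$, noting that either $|w^{\prime}-\zeta|>0$ or $|w^{\prime}-\zeta|<c/2$.

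In the first case, $w^{\prime}$ is a point of $L_{1}-\{\zeta\}$, so the hypothesis that $L_{1}$ intersects $L_{2}$ uniquely at $\zeta$ gives $\rho(w^{\prime},L_{2})>0$; since $w\in L_{2}$, this forces $|w-w^{\prime}|\geq\rho(w^{\prime},L_{2})>0$, and hence $w\neq w^{\prime}$. In the second case, a direct triangle-inequality estimate yields
\[
|w-w^{\prime}|\geq |w-\zeta|-|w^{\prime}-\zeta|>c-\tfrac{c}{2}=\tfrac{c}{2}>0,
\]
so again $w\neq w^{\prime}$. In either case, Lemma~\ref{jan01l1} then delivers $\rho(w,L_{1})>0$, completing the proof.

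I expect the main obstacle to be nothing more than cleanly articulating the constructive case split; the geometric content is entirely symmetric, and everything else amounts to applying Lemma~\ref{jan01l1} and the triangle inequality.
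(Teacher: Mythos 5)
Your proof is correct and follows essentially the same strategy as the paper: apply a Bishop--Lemma style result to obtain a witness point $w'\in L_1$ with the property that $w\neq w'$ implies $\rho(w,L_1)>0$, then use the hypothesis on $L_1$ to deduce $w\neq w'$ via a constructive case split. The only differences are cosmetic: the paper invokes Lemma~\ref{dec27l1} (the foot-of-perpendicular form) rather than Lemma~\ref{jan01l1}, and it dispatches the case split more tersely via cotransitivity of apartness (``either $z_1\neq z_2$ or $z_1\neq\zeta$'') rather than your comparison on $|w'-\zeta|$ against $0$ and $c/2$ followed by a triangle-inequality estimate. Both are valid constructive moves; yours spells out an extra inequality the paper leaves implicit.
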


\begin{proof}
Let $z_{2}\in L_{2}-\left\{  \zeta\right\}  $. By Lemma \ref{dec27l1}, there
exists $z_{1}\in L_{1}$ such that if $z_{2}\neq z_{1}$, then $\rho(z_{2}%
,L_{1})>0$. Either $z_{1}\neq z_{2}$ or $z_{1}\neq\zeta$; in the latter event,
since $L_{1}$ intersects $L_{2}$ uniquely at $\zeta$, we have $\rho
(z_{1},L_{2})>0$, so $z_{1}\neq z_{2}$. Thus in either case, $z_{2}\neq z_{1}$
and therefore $\rho(z_{2},L_{1})>0$. Hence $L_{2}$ intersects $L_{1}$ uniquely
at $\zeta$.%
\hfill

\end{proof}%

\medskip

In view of the foregoing proposition, we say that \textbf{lines }$L_{1}%
$\textbf{\ and }$L_{2}$\textbf{\ in }$\mathbb{C}$\textbf{\ intersect uniquely
at} $\zeta\in L_{1}\cap L_{2}$ if either intersects the other uniquely at
$\zeta$.

Our next aim is to establish a property of piecewise smooth Jordan curves
that, though intuitively clear, seems to require a surprisingly delicate
constructive proof:

\begin{proposition}
\label{jun04p1}Let $\gamma:\left[  a,b\right]  \rightarrow\mathbb{C}%
\mathbf{\ }$be a piecewise differentiable Jordan curve, let $\zeta_{0}%
\equiv\gamma(t_{0})$ be a smooth point of $\gamma$, and let $T$ be the
tangent, and $N$ the normal, to $\gamma$ at $\zeta_{0}$. Let $0<\theta<\pi/2$,
and let $L_{1},L_{2}$ be the two lines that intersect at $\zeta_{0}$ and make
an angle $\theta$ with $T$. Let $K$ be the cone with vertex $\zeta_{0}$, sides
$L_{1}$ and $L_{2}$, and axis $N$. Then there exist $r,\delta>0$ such that

\begin{enumerate}
\item[\emph{(i)}] the only points of $\gamma$ in the ball $B\equiv\overline
{B}(\zeta_{0},2\delta)$ are those in $\gamma\lbrack t_{0}-r,t_{0}+r]\cap B$;

\item[\emph{(ii)}] if $z\in K^{\circ}$ and $\left\vert z-\zeta_{0}\right\vert
\leq\delta$, then $\rho(z,\mathsf{car}(\gamma))\geq\rho(z,-K)>0$; and

\item[\emph{(iii)}] if $z,z^{\prime}$ belong to $K^{\circ}\cap\overline
{B}(\zeta_{0},\delta)$ and are on opposite sides of $T$, then $\mathsf{ind}%
(z;\gamma)=1-\mathsf{ind}(z^{\prime};\gamma)$.
\end{enumerate}
\end{proposition}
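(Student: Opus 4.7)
The plan is to translate and rotate so that $\zeta_0=0$, $T$ is the real axis, and $\gamma'(t_0)$ is a positive real number; then $N$ is the imaginary axis and $K=\{x+iy:|x|\leq\cot\theta\cdot|y|\}$. Since $\zeta_0$ is a smooth point there is an index $k$ with $t_0\in(t_k,t_{k+1})$ on which $\gamma$ is continuously differentiable, so I fix some $r_0>0$ with $[t_0-r_0,t_0+r_0]\subset(t_k,t_{k+1})\cap(a,b)$. For (i), set $A_r\equiv[a,t_0-r]\cup[t_0+r,b]$. The last conclusion of Proposition \ref{nov22p2}, applied to the compact sets $[a,t_0-r]\subset[a,b)$ and $[t_0+r,b]\subset(a,b]$, shows that $\gamma(A_r)$ is compact. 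Combining \textsf{J1} (with second coordinate $t_0$) with $\gamma(a)=\gamma(b)$ gives $\zeta_0\neq\gamma(t)$ for every $t\in A_r$, so Bishop's Lemma yields $\rho(\zeta_0,\gamma(A_r))>0$. I then pick $r\leq r_0$ and $\delta>0$ with $4\delta\leq\rho(\zeta_0,\gamma(A_r))$; using the decomposition $\mathsf{car}(\gamma)=\gamma[t_0-r,t_0+r]\cup\gamma(A_r)$, any $z$ on the left intersected with $\overline{B}(\zeta_0,2\delta)$ satisfies $\rho(z,\gamma(A_r))\geq 2\delta>0$ and so must lie in the closed set $\gamma[t_0-r,t_0+r]$, giving (i).

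For (ii), I shrink $r$ further using differentiability at $t_0$: writing $\gamma(t)=\gamma'(t_0)(t-t_0)+E(t)$ with $|E(t)|\leq\varepsilon|t-t_0|$ and choosing $\varepsilon$ sufficiently small relative to $|\gamma'(t_0)|\sin(\theta/2)$ forces $|\operatorname{Im}\gamma(t)|\leq\tan(\theta/2)\cdot|\operatorname{Re}\gamma(t)|$ for every $t\in[t_0-r,t_0+r]$, placing $\gamma(t)$ in the closed wedge $\overline{-K}$; equivalently $\rho(\gamma(t),-K)=0$, so for any $z\in K^\circ$ we have $|z-\gamma(t)|\geq\rho(z,-K)$. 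Now fix $z\in K^\circ$ with $|z-\zeta_0|\leq\delta$; since $\zeta_0\in\overline{-K}$, $\rho(z,-K)\leq|z-\zeta_0|\leq\delta$. For each $w\in\mathsf{car}(\gamma)$ I use trichotomy on $\rho(w,\gamma(A_r))$: either $\rho(w,\gamma(A_r))>0$, in which case the union decomposition forces $w\in\gamma[t_0-r,t_0+r]$ and the previous bound yields $|w-z|\geq\rho(z,-K)$; or $\rho(w,\gamma(A_r))<\delta$, in which case $|w-\zeta_0|>3\delta$ and therefore $|w-z|>2\delta\geq\rho(z,-K)$. This establishes $\rho(z,\mathsf{car}(\gamma))\geq\rho(z,-K)>0$.

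For (iii), conclusion (ii) implies that the upper wedge $U=\{w\in K^\circ\cap\overline{B}(\zeta_0,\delta):\operatorname{Im}w>0\}$ and lower wedge $L$ (defined analogously) are convex, so pointwise path-connected, subsets of $-\mathsf{car}(\gamma)$; by Proposition \ref{aug16p2} all of $U$ shares a common index $i_+\in\{0,1\}$ and all of $L$ shares a common index $i_-\in\{0,1\}$, and the remaining task is to show $i_++i_-=1$. My plan is to invoke Theorem 2 of \cite{JCTthm} to produce points $p\in\mathsf{in}(\gamma)$ and $q\in\mathsf{out}(\gamma)$ arbitrarily close to $\zeta_0$, and then---using (i) to confine $\mathsf{car}(\gamma)\cap\overline{B}(\zeta_0,2\delta)$ to the thin horizontal strip around $T$ traced by $\gamma[t_0-r,t_0+r]$---to link each of $p,q$ by a pointwise continuous path in $-\mathsf{car}(\gamma)\cap\overline{B}(\zeta_0,2\delta)$ to whichever of $U,L$ lies on the same side of $T$. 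If $i_+=i_-$, this would link $p$ to $q$ in $-\mathsf{car}(\gamma)$, so by Proposition \ref{aug16p2} $\mathsf{ind}(p;\gamma)=\mathsf{ind}(q;\gamma)$, contradicting their construction. The main obstacle is the constructive execution of this linking step: classically the arc $\gamma[t_0-r,t_0+r]$ cleanly separates $\overline{B}(\zeta_0,\delta)$ into two open components, but in \textbf{BISH} one must replace that topological fact with a careful argument combining Proposition \ref{mar7p1}, the thinness of the strip around $T$, and the density of $\mathsf{in}(\gamma)\cup\mathsf{out}(\gamma)$ in $-\mathsf{car}(\gamma)$.
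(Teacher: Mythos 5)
Your treatments of parts (i) and (ii) are essentially sound, though they reorganize the paper's argument. For (i) you reconstruct a version of Lemma \ref{feb01l1} directly from Proposition \ref{nov22p2} and Bishop's Lemma, which is fine. For (ii) you use only the first-order Taylor expansion of $\gamma$ at $t_0$ to land $\gamma[t_0-r,t_0+r]$ in $\overline{-K}$, where the paper instead uses the continuity of $\gamma'$ on a whole neighbourhood of $t_0$ (properties (a$^\ast$)--(c$^\ast$) in its proof). Both place $\gamma(t)$ in the right wedge, but note that the paper's stronger hypotheses (b$^\ast$), (c$^\ast$)---a uniform lower bound on $|\gamma'|$ and a uniform bound on the angle between $\gamma'(t)$ and $T$ for all $t\in[t_0-r,t_0+r]$---are deliberately built in at the stage of choosing $r$, because they are exactly what is needed later. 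Your minimal choice, tailored only to conclusion (ii), will not carry the weight of (iii).

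Part (iii) is where there is a genuine gap. You reduce correctly to showing $i_++i_-=1$ and then propose to obtain a contradiction by linking a point $p\in\mathsf{in}(\gamma)$ near $\zeta_0$ to the cone region on its side of $T$, and similarly for $q\in\mathsf{out}(\gamma)$. As you yourself observe, this linking step is precisely the hard part: $p$ and $q$ are produced by Theorem 2 of \cite{JCTthm} and could lie anywhere in $-\mathsf{car}(\gamma)\cap B(\zeta_0,2\delta)$, in particular arbitrarily close to the curve and far from the cone $K^\circ$; there is no constructive mechanism in your sketch that routes a pointwise-continuous path from such a point into $U$ or $L$ without crossing the curve. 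Constructively, ``$\gamma[t_0-r,t_0+r]$ separates the ball into two pieces'' is exactly the kind of fact that cannot be invoked off the shelf, and appealing to Proposition \ref{mar7p1} and ``thinness of the strip'' does not yet produce the path. The paper avoids this entirely: instead of trying to connect $p$ and $q$ to the cone, it assumes (for contradiction) that $z_1,z_2$ have the same index, takes a point $w$ of the \emph{opposite} kind near $\zeta_0$, uses the ball $B(w,d)$ being disjoint from $K^\circ\cap\overline{B}(\zeta_0,\delta)$ to show $w$ is bounded away from $N$, and then---via the auxiliary line $N_w$, Proposition \ref{mar7p1}, Lemma \ref{feb12l1}, Lemma \ref{may23l2}, and the derivative controls (b$^\ast$), (c$^\ast$)---manufactures a parameter $c\in[t_0-r,t_0+r]$ at which $\gamma'(c)$ makes too large an angle with $T$, contradicting (c$^\ast$). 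That mean-value / tangent-angle contradiction is the key idea missing from your plan, and without it or a comparable device the argument for (iii) does not close.
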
%

\medskip
\noindent
In preparation for our proof of Proposition \ref{jun04p1} we have several lemmas.

\begin{lemma}
\label{feb01l1}Let $\gamma:\left[  a,b\right]  \rightarrow\mathbb{C}$ be a
Jordan curve, $t_{0}\in\left(  a,b\right)  $, and $0<r<\min\left\{
t_{0}-a,b-t_{0}\right\}  $. Then there exists $\alpha\in\left(  0,r\right)  $
such that if $t\in\left[  a,b\right]  $ and $\left\vert \gamma(t)-\gamma
(t_{0})\right\vert \leq\alpha$, then $t\in\left[  t_{0}-r,t_{0}+r\right]  $.
\end{lemma}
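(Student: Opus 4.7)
The plan is to bound $|\gamma(t)-\gamma(t_0)|$ away from $0$ on the portion of the parameter interval that lies outside $[t_0-r,t_0+r]$, and then use this bound to define $\alpha$.

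First I would form the compact set
\[
K \equiv \gamma\bigl([a,t_0-r]\bigr) \cup \gamma\bigl([t_0+r,b]\bigr).
\]
The hypothesis $0<r<\min\{t_0-a,b-t_0\}$ gives $a<t_0-r<t_0+r<b$, so $[a,t_0-r]$ is a compact subset of $[a,b)$ and $[t_0+r,b]$ is a compact subset of $(a,b]$. By the last clause of Proposition \ref{nov22p2}, both of the images above are compact; hence so is their union $K$.

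Next I would verify that $\gamma(t_0)\neq z$ for every $z\in K$. Any such $z$ equals $\gamma(t)$ for some $t$ with either $t\leq t_0-r$ or $t\geq t_0+r$; in either case $t\neq t_0$, while $(t,t_0)\in[a,b]\times(a,b)$, so property \textsf{J1} gives $\gamma(t)\neq\gamma(t_0)$. Now Bishop's Lemma, applied to the complete, located set $K$ and the point $\gamma(t_0)$, supplies some $z_0\in K$ such that $\gamma(t_0)\neq z_0$ implies $\rho(\gamma(t_0),K)>0$; since $z_0\in K$ we already know $\gamma(t_0)\neq z_0$, so
\[
\alpha_0 \equiv \rho(\gamma(t_0),K) > 0.
\]

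Finally, set $\alpha\equiv\min\{\alpha_0/2,\,r/2\}$, which lies in $(0,r)$. I claim this $\alpha$ works. Given $t\in[a,b]$ with $|\gamma(t)-\gamma(t_0)|\leq\alpha$, I would show $t_0-r\leq t$ by ruling out $t<t_0-r$: if $t<t_0-r$ then $t\in[a,t_0-r]$, whence $\gamma(t)\in K$ and $|\gamma(t)-\gamma(t_0)|\geq\alpha_0>\alpha$, a contradiction; so $t\geq t_0-r$. A symmetric argument using $[t_0+r,b]$ gives $t\leq t_0+r$, and thus $t\in[t_0-r,t_0+r]$.

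There is no real obstacle here; the only subtle point is making sure that both endpoints $a$ and $b$ are handled, which is precisely why I invoke the full strength of Proposition \ref{nov22p2} (covering compact subsets of $[a,b)$ and of $(a,b]$ separately) rather than just \textsf{J2}, and why the inequalities $\lnot(t<t_0-r)$ and $\lnot(t>t_0+r)$ (giving $t_0-r\leq t\leq t_0+r$) are argued via contradiction rather than by case analysis on trichotomy.
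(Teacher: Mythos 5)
Your proof is correct, but it takes a different route from the paper's. The paper's argument never forms the image of the complementary pieces $[a,t_0-r]$ and $[t_0+r,b]$; instead it first uses \textsf{J1} to get $m\equiv\tfrac12|\gamma(a)-\gamma(t_0)|>0$, then uses (pointwise) continuity of $\gamma$ near $a$ and $b$ to find $s>0$ keeping $\gamma(t)$ at least $m$ away from $\gamma(t_0)$ whenever $t$ is $2s$-close to an endpoint, and finally invokes \textsf{J2} directly (uniform continuity of $\gamma^{-1}$ on $\gamma[a+s,b-s]$) to extract $\alpha$; the conclusion follows by a three-way case split on $t$. By contrast, you bundle all the endpoint bookkeeping into a single compactness statement, $K\equiv\gamma[a,t_0-r]\cup\gamma[t_0+r,b]$, justified by the final clause of Proposition~\ref{nov22p2}, and then obtain $\alpha_0=\rho(\gamma(t_0),K)>0$ in one stroke via \textsf{J1} plus Bishop's Lemma. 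This is slicker and avoids the auxiliary $m$ and $s$; the price is that it leans on the heavier result that $\gamma$ maps compact subsets of $[a,b)$ or $(a,b]$ to compact sets, whereas the paper's argument stays closer to the raw definitions \textsf{J1}, \textsf{J2}. Both are sound; note only that your appeal to Bishop's Lemma requires, as you correctly observe, checking $\gamma(t_0)\neq z$ for \emph{every} $z\in K$ (so the particular $z_0$ Bishop's Lemma yields is automatically apart from $\gamma(t_0)$), and that the final step $\lnot(t<t_0-r)\Rightarrow t\geq t_0-r$ is the standard constructive fact about $\mathbb{R}$, not a hidden use of trichotomy.
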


\begin{proof}
Let%
\[
m=\tfrac{1}{2}\left\vert \gamma(a)-\gamma(t_{0})\right\vert =\tfrac{1}%
{2}\left\vert \gamma(b)-\gamma(t_{0})\right\vert ,
\]
which is positive by \textsf{J1}. By the continuity of $\gamma$, there exists
$s$ with $a+s<t_{0}<b-s$ such that if $t\in\left[  a,b\right]  $ and either
$\left\vert t-a\right\vert <2s$ or $\left\vert t-b\right\vert <2s$, then
$\left\vert \gamma(t)-\gamma(t_{0})\right\vert >m$. By J2, there exists
$\alpha\in\left(  0,m\right)  $ such that if $t,t^{\prime}\in\lbrack a+s,b-s]$
and $\left\vert \gamma(t)-\gamma(t^{\prime})\right\vert \leq\alpha$, then
$\left\vert t-t^{\prime}\right\vert \leq r$. Consider any $t\in\left[
a,b\right]  $ such that $\left\vert \gamma(t)-\gamma(t_{0})\right\vert
\leq\alpha$. We have $\left\vert t-a\right\vert <2s$ or $\left\vert
t-b\right\vert <2s$ or $t\in\lbrack a+s,b-s]$. Since each of the first two
cases yields the contradiction $\left\vert \gamma(t)-\gamma(t_{0})\right\vert
>m>\alpha$, we must have $t\in\lbrack a+s,b-s]$ and therefore, by our choice
of $\alpha$, $\left\vert t-t_{0}\right\vert \leq r$.%
\hfill

\end{proof}

\begin{lemma}
\label{may23l2}Let $\gamma:\left[  a,b\right]  \rightarrow\mathbb{C}$ be a
differentiable path such that $\gamma(a)\neq\gamma(b)$ and $0<m\equiv
\inf\left\{  \left\vert \gamma^{\prime}(t)\right\vert :t\in\left[  a,b\right]
\right\}  $. Then for each $\varepsilon\in(0,\pi/2)$ there exists $t\in\left(
a,b\right)  $ such that the acute angle between $\gamma^{\prime}(t)$ and the
vector $\overrightarrow{\gamma(a)\gamma(b)}\ $is less than $\varepsilon$.
\end{lemma}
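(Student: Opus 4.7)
The plan is first to reduce the claim to an approximate Rolle-type statement, and then to prove the latter by discretisation.

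Set $u = \gamma(b) - \gamma(a) \neq 0$ and $v = u/|u|$, and define the continuous real-valued function
\[
q(t) \equiv \operatorname{Im}(\bar v \gamma'(t)) \qquad (t \in [a,b]),
\]
the component of $\gamma'(t)$ perpendicular to the chord $u$. The acute angle $\beta(t) \in [0,\pi/2]$ between $\gamma'(t)$ and $u$ satisfies $\sin\beta(t) = |q(t)|/|\gamma'(t)|$, so since $|\gamma'| \geq m$ it will suffice to exhibit $t^{*} \in (a,b)$ with $|q(t^{*})| < m\sin\varepsilon$. The crucial identity is
\[
\int_a^b q(t)\,dt \;=\; \operatorname{Im}\!\left(\bar v \int_a^b \gamma'(t)\,dt\right) \;=\; \operatorname{Im}(\bar v u) \;=\; \operatorname{Im}(|u|) \;=\; 0,
\]
so, setting $F(t) \equiv \int_a^t q(s)\,ds$, we have $F$ continuously differentiable with $F' = q$ and $F(a) = F(b) = 0$; the task thus reduces to constructing an interior point where $|F'|$ is small.

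I would carry this out by discretisation. Set $\tau = m\sin\varepsilon$, choose $\omega > 0$ so that $|q(s) - q(s')| < \tau/4$ whenever $|s-s'| < \omega$, and take an even partition $a = t_0 < t_1 < \cdots < t_n = b$ of mesh $h < \omega/2$. The averages $A_k = (F(t_k)-F(t_{k-1}))/h$ satisfy $\sum_k A_k h = F(b)-F(a) = 0$, and each differs from $q(t_k)$ by less than $\tau/4$ by uniform continuity. Using the constructively valid trichotomy (with overlaps)
\[
\textstyle \max_k F(t_k) > \tau h/8,\quad \min_k F(t_k) < -\tau h/8,\quad \text{or}\ |F(t_k)| < \tau h/4 \text{ for every } k,
\]
split into cases. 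In the first case, an approximate maximiser $k^{*}$ is automatically in $\{1,\dots,n-1\}$ (since $F(t_0) = F(t_n) = 0$), and the one-sided inequalities $A_{k^{*}} \gtrsim 0 \gtrsim A_{k^{*}+1}$ combined with uniform continuity of $q$ on the length-$2h$ window $[t_{k^{*}-1}, t_{k^{*}+1}]$ force $|A_{k^{*}}|$, and hence $|q(t_{k^{*}})|$, to be less than $\tau$; the second case is symmetric, and the third yields $|A_k|<\tau/2$ for every $k$ directly, so $|q(t_k)|<\tau$ at any interior sample point.

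The main obstacle is the book-keeping in the near-maximum case: constructively we have only a discrete \emph{approximate} maximum rather than an actual one, and one must convert the informal principle ``the derivative vanishes at a maximum'' into a quantitative statement linking the height of the discrete peak to a pointwise bound on $q$ through the modulus of uniform continuity. Interlocking the parameters $\tau$, $\omega$, $h$, and the precision of the approximate maximiser is the delicate step; once this is done, the witness $t^{*}$ lies in $(a,b)$ and satisfies $|q(t^{*})| < m\sin\varepsilon$, whence $\sin\beta(t^{*}) < \sin\varepsilon$ and $\beta(t^{*}) < \varepsilon$, as required.
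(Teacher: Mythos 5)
Your reduction is exactly the one the paper uses, though phrased in a coordinate-free way: after dividing by $v = u/|u|$ your function $q(t) = \operatorname{Im}(\bar v\gamma'(t))$ is precisely $\operatorname{Im}$ of the derivative of the rotated path, and your antiderivative $F(t) = \int_a^t q = \operatorname{Im}(\bar v(\gamma(t)-\gamma(a)))$ is the rotated imaginary part, with $F(a) = F(b) = 0$. At this point the paper simply invokes the constructive Rolle theorem \cite[p.~47, (5.5)]{BB}, which delivers an interior $t$ with $|F'(t)| = |q(t)| < m\sin\varepsilon$, and is done in one line. That theorem is a standard part of the Bishop toolkit, so there is no need to re-derive it; once you recognized that the content of your ``approximate Rolle-type statement'' \emph{is} the constructive Rolle theorem, you could have stopped.

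What you do instead is attempt to reprove constructive Rolle by discretisation, and here the argument is left genuinely incomplete. You acknowledge that the parameter book-keeping in the near-maximum case is delicate, and indeed with the constants as stated it does not close: with $|q(s)-q(s')|<\tau/4$ for $|s-s'|<\omega$ and an approximate maximiser $k^*$ with defect $\eta$, one gets $-\eta/h \le A_{k^*} < \eta/h + 3\tau/4$, which together with $|q(t_{k^*})-A_{k^*}|<\tau/4$ yields at best $|q(t_{k^*})| < \eta/h + \tau$, strictly larger than $\tau$. This is repairable by tightening the modulus to $\tau/8$ or so and taking $\eta$ small relative to $h$, but as written the estimate overshoots, and the precise interlocking of $\omega$, $h$, $\eta$ that you flag as ``the delicate step'' is exactly the part that has to be done, not deferred. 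In short: same high-level idea as the paper, but a long unfinished detour where a citation would suffice.
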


\begin{proof}
Rotating, we may assume that $\operatorname{Im}\gamma(a)=0=\operatorname{Im}%
\gamma(b)$, so that $\overrightarrow{\gamma(a)\gamma(b)}$ lies along the
$x$-axis$.$Given $\varepsilon\in\left(  0,\pi/2\right)  $ and applying Rolle's
theorem \cite[page 47, (5.5)]{BB} to the differentiable function
$\operatorname{Im}\gamma$ on $\left[  a,b\right]  $, we obtain $t\in\left[
a,b\right]  $ such that $\left\vert \operatorname{Im}\gamma^{\prime
}(t)\right\vert <m\sin\varepsilon$. If $\theta$ is the acute angle between
$\gamma^{\prime}(t)$ and the $x$-axis, then
\[
\sin\theta=\frac{\left\vert \operatorname{Im}\gamma^{\prime}(t)\right\vert
}{\left\vert \gamma^{\prime}(t)\right\vert }\leq\frac{\left\vert
\operatorname{Im}\gamma^{\prime}(t)\right\vert }{m}<\sin\varepsilon,
\]
so $\theta<\varepsilon$.%
\hfill

\end{proof}

\begin{lemma}
\label{feb12l1}Let $\xi_{1},\eta_{1},\eta_{2},\xi_{2}$ be distinct points of a
line $N$ in $\mathbf{C}$, such that $\eta_{1}\in\left(  \xi_{1},\eta
_{2}\right)  $ and $\eta_{2}\in\left(  \eta_{1},\xi_{2}\right)  $. Let
$0<\varepsilon<\frac{\pi}{2}$ and $\delta=\frac{1}{2}\left\vert \eta_{1}%
-\eta_{2}\right\vert \sin\varepsilon$. Then if $w_{1},w_{2}\in\mathbf{C}$ and
$\rho(w_{k},[\xi_{k},\eta_{k}])\leq\delta$ $(k=1,2)$, the acute angle between
the vectors $\overrightarrow{w_{1}w_{2}}$ and $N$ is at most $\varepsilon$.
\end{lemma}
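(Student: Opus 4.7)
The plan is to rotate and translate so that $N$ becomes the real axis, with the four points ordered $\xi_1<\eta_1<\eta_2<\xi_2$; the hypotheses $\eta_1\in(\xi_1,\eta_2)$ and $\eta_2\in(\eta_1,\xi_2)$, interpreted on $\mathbb{R}$, force either this ordering or its reverse, and the latter reduces to the former by a further reflection. Writing $w_k=x_k+iy_k$ and $L=\eta_2-\eta_1>0$ (so $\delta=\tfrac12 L\sin\varepsilon$), the goal becomes: the acute angle $\theta$ between $\overrightarrow{w_1w_2}$ and the real axis satisfies $\theta\leq\varepsilon$, which, once $x_2-x_1>0$ is secured, is equivalent to $|y_2-y_1|\cos\varepsilon\leq(x_2-x_1)\sin\varepsilon$.

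I would then extract coordinate bounds from $\rho(w_k,[\xi_k,\eta_k])\leq\delta$. Projection to the real axis is $1$-Lipschitz, so $|y_k|\leq\delta$. For the horizontal components, every $p_1\in[\xi_1,\eta_1]$ satisfies $p_1\leq\eta_1$, so $(x_1-p_1)^2\geq\max(0,x_1-\eta_1)^2$; taking the infimum in $p_1$ gives $\max(0,x_1-\eta_1)^2+y_1^2\leq\delta^2$, hence $x_1\leq\eta_1+\sqrt{\delta^2-y_1^2}$, and symmetrically $x_2\geq\eta_2-\sqrt{\delta^2-y_2^2}$. Subtracting,
\[
x_2-x_1\;\geq\;L-\sqrt{\delta^2-y_1^2}-\sqrt{\delta^2-y_2^2}\;\geq\;L(1-\sin\varepsilon)>0.
\]

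Finally, I set $|y_k|=\delta s_k$ with $s_k\in[0,1]$ and apply Cauchy--Schwarz to the dot product of $(s_k,\sqrt{1-s_k^2})$ with $(\cos\varepsilon,\sin\varepsilon)$ to get $s_k\cos\varepsilon+\sqrt{1-s_k^2}\,\sin\varepsilon\leq 1$. Summing over $k=1,2$ and multiplying by $\delta$,
\[
(|y_1|+|y_2|)\cos\varepsilon+\bigl(\sqrt{\delta^2-y_1^2}+\sqrt{\delta^2-y_2^2}\bigr)\sin\varepsilon\;\leq\;2\delta\;=\;L\sin\varepsilon,
\]
which, via $|y_2-y_1|\leq|y_1|+|y_2|$ and the bound on $x_2-x_1$ above, rearranges to $|y_2-y_1|\cos\varepsilon\leq(x_2-x_1)\sin\varepsilon$, completing the proof. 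The main obstacle is this last trigonometric step: the crude bounds $|y_2-y_1|\leq 2\delta$ and $x_2-x_1\geq L-2\delta$ only yield $\tan\theta\leq\sin\varepsilon/(1-\sin\varepsilon)$, which exceeds $\tan\varepsilon$ for every $\varepsilon\in(0,\pi/2)$; recovering the sharp constant $\varepsilon$ requires coupling the horizontal slack $\sqrt{\delta^2-y_k^2}$ with the vertical contribution $|y_k|$ through the identity $s\cos\varepsilon+\sqrt{1-s^2}\,\sin\varepsilon\leq 1$, which reflects the fact that the extremal direction of $\overrightarrow{w_1w_2}$ is attained on an internal common tangent to the two discs $\overline{B}(\eta_k,\delta)$.
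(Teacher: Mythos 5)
Your proof is correct, and it takes a genuinely different route from the paper's. The paper's proof places circles $C_k$ of radius $r$ centred at $\eta_k$, asserts from ``elementary Euclidean geometry'' that the acute angle between $\overrightarrow{w_1w_2}$ and $N$ is maximised when $w_1,w_2$ are the contact points of the internal common tangent cutting $[\eta_1,\eta_2]$ at its midpoint, and then cites ``elementary trigonometry'' to read off that maximal angle as $\sin^{-1}\bigl(2r/\left\vert\eta_1-\eta_2\right\vert\bigr)$; the extremality claim itself is left unproved. Your coordinate computation actually establishes what the paper only asserts: the bounds $x_1\leq\eta_1+\sqrt{\delta^2-y_1^2}$ and $x_2\geq\eta_2-\sqrt{\delta^2-y_2^2}$, combined with the normalised Cauchy--Schwarz estimate $s\cos\varepsilon+\sqrt{1-s^2}\,\sin\varepsilon\leq 1$ applied to $s_k=\left\vert y_k\right\vert/\delta$, convert the two distance constraints into the sharp linear inequality $\left\vert y_2-y_1\right\vert\cos\varepsilon\leq(x_2-x_1)\sin\varepsilon$. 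You correctly identify the pitfall that the crude bounds $\left\vert y_2-y_1\right\vert\leq 2\delta$, $x_2-x_1\geq L-2\delta$ lose the sharp constant, and that the coupling of horizontal slack with vertical offset through that dot-product identity is precisely what encodes the paper's tangent-line extremal configuration. The paper's argument is shorter and pictorially transparent; yours is self-contained, checkable step by step, and avoids any appeal to an unargued optimisation over the admissible $(w_1,w_2)$---arguably a better fit for the constructive setting of the paper.
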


\begin{proof}
Let $0<r<\frac{1}{2}\left\vert \eta_{1}-\eta_{2}\right\vert $ and let $C_{k}$
be the circle with centre $\eta_{k}$ and radius $r$. Elementary Euclidean
geometry shows that for $\rho(w_{k},[\xi_{k},\eta_{k}])\leq r$ $(k=1,2)$, the
acute angle between $\overrightarrow{w_{1}w_{2}}$ and $N$ is largest when
$w_{k}$ is the point of contact with $C_{k}$ of a common tangent $T$ to
$C_{1}$ and $C_{2}$ that cuts $\left[  \eta_{1},\eta_{2}\right]  $ at
$\frac{1}{2}\left(  \eta_{1}+\eta_{2}\right)  $. By elementary trigonometry,
this largest acute angle is $\sin^{-1}\frac{2r}{\left\vert \eta_{1}-\eta
_{2}\right\vert }$. It follows that if $\rho(w_{k},[\xi_{k},\eta_{k}])<\delta
$, and $\theta$ is the acute angle between $\overrightarrow{w_{1}w_{2}}$ and
$N$, then $\theta$ $<\sin^{-1}\frac{2\delta}{\left\vert \eta_{1}-\eta
_{2}\right\vert }=\varepsilon$.%
\hfill

\end{proof}

\begin{lemma}
\label{nov04l1}Let $L_{1},L_{2}$ be two lines in $\mathbb{C}$ that intersect
uniquely. Then the two closed cones $C_{0},C_{1}$ bounded by $L_{1}$ and
$L_{2}$ are located, $C_{k}^{\circ}=C_{k}-\left(  L_{1}\cup L_{2}\right)  $,
$C_{0}^{\circ}\cup C_{1}^{\circ}=-(L_{1}\cup L_{2})$, and $C_{k}^{\circ
}=-C_{1-k}=-(-C_{k})$ $(k=0,1)$.
\end{lemma}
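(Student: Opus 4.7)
My plan is to reduce to a convenient coordinate system and describe the cones via two $1$-Lipschitz real-linear functionals. By applying a rotation and a translation (isometries, preserving distances and locatedness) I may assume $L_1\cap L_2=\{0\}$, that $L_1$ is the real axis, and that $L_2=\{z:\operatorname{Im}(e^{-i\alpha}z)=0\}$ for some $\alpha\in(0,\pi)$; the unique-intersection hypothesis gives $\sin\alpha>0$. Put $u(z):=\operatorname{Im}(z)$ and $v(z):=\operatorname{Im}(e^{-i\alpha}z)$, so $L_1=\{u=0\}$, $L_2=\{v=0\}$, $\rho(z,L_1)=|u(z)|$, and $\rho(z,L_2)=|v(z)|$. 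The two closed double cones bounded by $L_1,L_2$ then take the form $C_0:=\{uv\geq 0\}$ and $C_1:=\{uv\leq 0\}$, each being the union of two closed convex sectors $W^{(k)}_\pm$ with common vertex $0$. Each sector, a closed convex subset of $\mathbb{R}^2$ cut out by two half-plane inequalities, is located, so $\rho(z,C_k)=\min\bigl(\rho(z,W^{(k)}_+),\rho(z,W^{(k)}_-)\bigr)$ exists. A formula I shall need later is $\rho(z,C_{1-k})=\min(|u(z)|,|v(z)|)$, valid for $z$ lying in an open sector of $C_k$: the upper bound comes from $L_1\cup L_2\subseteq C_{1-k}$, and the lower bound follows by checking on each of the two sectors comprising $C_{1-k}$ that the $1$-Lipschitz property of $u$ (respectively $v$) forces $|z-z_0|\geq|u(z)|$ (respectively $\geq|v(z)|$) for any $z_0$ there.

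For the set-theoretic identities, read $C_k^\circ$ as the metric interior. To prove $C_k^\circ=C_k-(L_1\cup L_2)$: if $z\in C_k$ with $\rho(z,L_j)>0$ for $j=1,2$, then on the ball of radius $\tfrac{1}{2}\min(|u(z)|,|v(z)|)$ about $z$ the quantities $u$ and $v$ keep their definite signs by $1$-Lipschitz, so the whole ball stays in $C_k$; conversely, near each $w\in L_1\cup L_2$ a small perturbation (perpendicular to the offending line when $u(w)$ or $v(w)$ is nonzero, or along a fixed direction in an open sector of $C_{1-k}$ when $w$ is close to the origin) yields points of $C_{1-k}^\circ$, so no ball about $w$ lies inside a single $C_k$. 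The union identity $C_0^\circ\cup C_1^\circ=-(L_1\cup L_2)$ then follows at once: $|u(z)|,|v(z)|>0$ forces each of $u(z),v(z)$ to have a definite sign, placing $z$ into one of the four open sectors, hence into $C_0^\circ$ or $C_1^\circ$; the reverse inclusion is built into the preceding identity.

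The equality $C_k^\circ=-C_{1-k}$ now follows routinely: $\subseteq$ from the distance formula ($\rho(z,C_{1-k})=\min(|u(z)|,|v(z)|)>0$), and $\supseteq$ from $L_j\subseteq C_{1-k}$ (so $\rho(z,L_j)\geq\rho(z,C_{1-k})>0$), the union identity, and ruling out $z\in C_{1-k}^\circ\subseteq C_{1-k}$ via $\rho(z,C_{1-k})>0$. The main obstacle is the remaining equality $C_k^\circ=-(-C_k)$, since one might worry that $-C_k$ being strictly smaller than $C_{1-k}$ makes positivity of $\rho(z,-C_k)$ weaker than positivity of $\rho(z,C_{1-k})$. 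To sidestep this I observe that (by the very construction of paragraph two) every $w\in L_1\cup L_2$ admits arbitrarily small perturbations landing in $C_{1-k}^\circ=-C_k$, so $L_1\cup L_2\subseteq\overline{-C_k}$, giving $\rho(z,L_1\cup L_2)\geq\rho(z,-C_k)>0$; the union identity then places $z\in C_0^\circ\cup C_1^\circ$, and $z\in C_{1-k}^\circ=-C_k$ is ruled out by $\rho(z,-C_k)>0$, leaving $z\in C_k^\circ$. The forward inclusion $C_k^\circ\subseteq-(-C_k)$ uses only $-C_k\subseteq C_{1-k}$ together with the equality $C_k^\circ=-C_{1-k}$ just proved.
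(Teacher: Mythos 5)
Your proof is correct, and the core set-theoretic steps follow the same logical route as the paper: you establish the union identity $C_0^\circ\cup C_1^\circ=-(L_1\cup L_2)$ and the distance formula $\rho(z,C_{1-k})=\min(\rho(z,L_1),\rho(z,L_2))$ for $z\in C_k^\circ$, then derive $C_k^\circ=-C_{1-k}$ from these, and finally get $C_k^\circ=-(-C_k)$ from the density of $C_{1-k}^\circ=-C_k$ in $C_{1-k}$ (your observation that $L_1\cup L_2\subseteq\overline{-C_k}$ is just a rephrasing of that density). Where you genuinely diverge is on locatedness. The paper computes the distance $\rho(z,C_{1-k})$ only on the dense set $C_0^\circ\cup C_1^\circ$ and then appeals to the standard constructive fact that a set whose distance function exists on a dense subset of a metric space is located. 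You instead decompose each $C_k$ into two closed convex sectors with common vertex, assert that each sector is located, and use $\rho(z,A\cup B)=\min(\rho(z,A),\rho(z,B))$. That decomposition-based route is perfectly sound and arguably more concrete, but it does quietly lean on the nontrivial (though true) fact that a closed convex sector in $\mathbb{R}^2$ bounded by two rays through a point is located; constructively, the intersection of two located half-planes is not automatically located, and for the sector one needs either an explicit projection formula or a duality/arc argument. The paper's density argument avoids that delicacy at the cost of invoking a general locatedness-inheritance lemma. Your explicit coordinate reduction $u=\operatorname{Im}z$, $v=\operatorname{Im}(e^{-i\alpha}z)$ also usefully substantiates the facts the paper simply labels ``elementary,'' which is a fair thing to do in a proof written from scratch.
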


\begin{proof}
It is elementary that $C_{k}^{\circ}=C_{k}-\left(  L_{1}\cup L_{2}\right)  $,
that $C_{0}^{\circ}\cup C_{1}^{\circ}=-(L_{1}\cup L_{2})$, that $C_{k}^{\circ
}$ is dense in $C_{k}$, that $C_{0}^{\circ}\cup C_{1}^{\circ}$ is dense in
$\mathbb{C}$, that $C_{0}^{\circ}\cap C_{1}^{\circ}=\varnothing$, and that if
$z\in C_{k}^{\circ}$, then $\rho(z,C_{1-k}^{\circ})=\min\{\rho(z,L_{1}%
),\rho(z,L_{2})\}>0$. Hence $\rho(z,C_{1-k})$, equal to $\rho(z,C_{1-k}%
^{\circ})$, exists for all $z$ in the dense subset $C_{0}^{\circ}\cup
C_{1}^{\circ}$ of $\mathbb{C}$, so $C_{1-k}$ is located. Also, $C_{k}^{\circ
}\subset-C_{1-k}$. On the other hand, if $z\in-C_{1-k}$, then $z\in-(L_{1}\cup
L_{2})=C_{0}^{\circ}\cup C_{1}^{\circ}$; since $z\notin C_{1-k}^{\circ}$, we
must have $z\in C_{k}^{\circ}$. Hence $-C_{1-k}\subset C_{k}^{\circ}$ and
therefore $C_{k}^{\circ}=-C_{1-k}$. Finally, since $C_{1-k}^{\circ}$ is dense
in $C_{1-k}$, we have $C_{k}^{\circ}=-C_{1-k}^{\circ}=-(-C_{k})$.%
\hfill

\end{proof}%

\medskip

This brings us to the \textbf{proof\ of\ Proposition }\ref{jun04p1}.%

\medskip

\begin{proof}
Rotating if necessary, we may assume that $\operatorname{Im}\gamma^{\prime
}(t_{0})=0$, so that $\gamma^{\prime}(t_{0})$ is parallel to the $x$-axis. Let%
\begin{align*}
K_{1} &  =\left\{  z\in K-\{\zeta_{0}\}:\theta\leq\mathsf{Arg}\ z\leq
\pi-\theta\right\}  ,\\
K_{2} &  =\left\{  z\in K-\{\zeta_{0}\}:-\pi+\theta\leq\mathsf{Arg}%
\ z\leq-\theta\right\}  ,
\end{align*}
where $\mathsf{Arg}$ denotes the principal value of the argument---that is,
the value in $(-\pi,\pi]$. Then $K-\{\zeta_{0}\}=K_{1}\cup K_{2}$, and $K_{1}$
(respectively, $K_{2})$ is that part of $K$ above (respectively, below) the
tangent $T$. By Lemma \ref{nov04l1}, $K_{j}^{\circ}=K_{j}-\left(  L_{1}\cup
L_{2}\right)  $, from which we see that
\begin{align*}
K_{1}^{\circ} &  =\left\{  z\in K-\{\zeta_{0}\}:\theta<\mathsf{Arg}%
\ z<\pi-\theta\right\}  ,\\
K_{2}^{\circ} &  =\left\{  z\in K-\{\zeta_{0}\}:-\pi+\theta<\mathsf{Arg}%
\ z<-\theta\right\}
\end{align*}
Again by Lemma \ref{nov04l1}, $-K$ is the interior of the closed cone with
axis $T$ and sides $L_{1},L_{2}$, and both $K$ and $-K$ are located. Using the
continuity of $\gamma$ and $\gamma^{\prime}$, now choose $r$ such that
$0<r<\min\left\{  t_{0}-a,b-t_{0}\right\}  $, $\gamma$ is differentiable on
$[t_{0}-r,t_{0}+r]$, and for each $t\in\lbrack t_{0}-r,t_{0}+r]$,

\begin{itemize}
\item[(a$^{\ast}$)] if $\gamma(t)\neq\gamma(t_{0})$, then $\gamma(t)\in-K$,

\item[(b$^{\ast}$)] $\left\vert \gamma^{\prime}(t)\right\vert \geq\frac{1}%
{2}\left\vert \gamma^{\prime}(t_{0})\right\vert $, and

\item[(c$^{\ast}$)] the acute angle between $\gamma^{\prime}(t)$ and $T$ is
less than$\ \theta$.
\end{itemize}%

\noindent
By Lemma \ref{feb01l1}, there exists $\delta\in(0,r)$ such that if
$t\in\left[  a,b\right]  $ and $\left\vert \gamma(t)-\gamma(t_{0})\right\vert
\leq2\delta$, then $t\in\left[  t_{0}-r,t_{0}+r\right]  $. Let $z_{1}%
=\zeta_{0}+i\delta$ and $z_{2}=\zeta_{0}-i\delta$ (so $z_{1}$ lies at a
distance $\delta$ from $\zeta_{0}$ and directly above it, and $z_{2}$ lies at
the same distance from $\zeta_{0}$ and directly below it). For $j=1,2$ let
\[
S_{j}\equiv K_{j}^{\circ}\cap\overline{B}(\zeta_{0},\delta)\subset K^{\circ}.
\]
By Lemma \ref{nov04l1}, $S_{j}\subset-(-K)$. If $z\in S_{j}$, then since
$\gamma(t_{0})\in\overline{-K}$,%
\[
0<\rho(z,-K)\leq\left\vert z-\zeta_{0}\right\vert \leq\delta.
\]
Suppose there exists $t\in\left[  a,b\right]  $ such that $\left\vert
z-\gamma(t)\right\vert <\rho(z,-K)$; then $\gamma(t)\notin-K$. Also,
$\left\vert z-\gamma(t)\right\vert <\left\vert z-\gamma(t_{0})\right\vert
\ $and therefore $\gamma(t)\neq\gamma(t_{0})$. But%
\[
\left\vert \gamma(t)-\gamma(t_{0})\right\vert \leq\left\vert z-\gamma
(t)\right\vert +\left\vert z-\zeta_{0}\right\vert <2\left\vert z-\gamma
(t_{0})\right\vert \leq2\delta,
\]
so $t\in\lbrack t_{0}-r,t_{0}+r]$,$\ $and therefore, by (a$^{\ast}$),
$\gamma(t)\in-K$\thinspace---a contradiction. It follows that $\rho
(z\mathsf{,car}(\gamma))\geq\rho(z,-K)$. This proves (ii) and shows that
$S_{j}\subset-\mathsf{car}(\gamma)$. Since $S_{j}$ is convex and therefore
path connected, Proposition \ref{aug16p2} shows us that $\mathsf{ind}%
(z;\gamma)=\mathsf{ind}(z_{k};\gamma)$ for each $z\in S_{j}$. To prove (iii)
and hence complete the proof of our proposition, it will therefore suffice to
show that one of the points $z_{1},z_{2}$ has index $1$ with respect to
$\gamma$, and the other has index $0$.

To that end, suppose that both $z_{1}$ and $z_{2}$ have index $1$, in which
case $S_{1}\cup S_{2}\subset\mathsf{in}(\gamma)$. Using Theorem 2 of
\cite{JCTthm}, choose $w\in\mathsf{out}(\gamma)$ such that $\left\vert
w-\gamma(t_{0})\right\vert <\frac{\delta}{2}\cos\theta$; then $0<d\equiv
\rho(w,\mathsf{car}(\gamma))<\frac{\delta}{2}$. By Proposition \ref{oct15p1},
$B_{1}\equiv B(w,d)\subset\mathsf{out}(\gamma)$, so by Corollary
\ref{aug02c1}, $S_{1}\cup S_{2}\subset-B_{1}$. If $z\in B_{1}$, then since
$z\in-\mathsf{car}(\gamma)$, we have%
\[
0<\left\vert z-\zeta_{0}\right\vert \leq\left\vert z-w\right\vert +\left\vert
w-\zeta_{0}\right\vert <d+\frac{\delta}{2}<\delta;
\]
if also $\rho(z,-K)>0$, then by Lemma \ref{nov04l1},%
\[
z\in K^{\circ}\cap\overline{B}(\zeta_{0},\delta)=S_{1}\cup S_{2}\subset-B_{1},
\]
which is absurd. Hence $\rho(z,-K)=0$ and $z\in\overline{\left(  -K\right)  }
$. Noting that $B_{1}$ is open, we conclude that $z\subset\overline{\left(
-K\right)  }^{\circ}=-K$.\ If, moreover, $z\in B_{1}\cap N$, then (since
$z\neq\zeta_{0}$) $z\in K_{1}^{\circ}\cup K_{2}^{\circ}$, so by Lemma
\ref{nov04l1}, $\rho(z,-K)>0$, which we have just shown to be absurd. Thus
$B_{1}\cap N=\varnothing$, $\rho(w,N)\geq d$, and therefore the line $N_{w}$
through $w$ and parallel to $N$ is bounded away from $N$. Let $\xi_{j}$ be the
point where $N_{w}$ intersects $L_{j}$. Elementary trigonometry shows that%
\[
0<\left\vert \xi_{j}-\zeta_{0}\right\vert =\frac{\rho(\xi_{j},N)}{\cos\theta
}=\frac{\rho(w,N)}{\cos\theta}\leq\frac{\left\vert w-\zeta_{0}\right\vert
}{\cos\theta}<\frac{\delta}{2}.
\]
For all $z\in N_{w}$ on the opposite side of $L_{j}$ from $w$ and such that
$\left\vert z-\xi_{j}\right\vert <\frac{\delta}{2}$, we have $\left\vert
z-\zeta_{0}\right\vert <\delta$ and therefore $z\in S_{j}$; it readily follows
from this that $\xi_{j}\in\overline{\mathsf{in}(\gamma)}$. Thus by Proposition
\ref{mar6p2}, $\left\vert \xi_{j}-w\right\vert \geq\rho(w,\overline
{\mathsf{in}(\gamma)})=d$. Let%
\[
\lambda_{j}=\frac{d}{\left\vert \xi_{j}-w\right\vert }\text{ \ and \ }\eta
_{j}=\left(  1-\lambda_{j}\right)  w+\lambda_{j}\xi_{j}.
\]
Then $\eta_{j}\in\left[  w,\xi_{j}\right]  \cap N_{w},$ $\left\vert \eta
_{j}-w\right\vert =d,\ \left\vert \eta_{1}-\eta_{2}\right\vert =2d,$
$\left\vert \eta_{j}-\zeta_{0}\right\vert \leq d+\frac{\delta}{2}\,<\delta,$
and $\eta_{j}\in\overline{\mathsf{out}}(\gamma)$. By Lemma \ref{feb12l1},
there exists $\delta_{1}\in\left(  0,d\right)  $ such that for $k\in\left\{
1,2\right\}  $, if $\rho(w_{k},\left[  \eta_{k},\xi_{k}\right]  )<\delta_{1}$,
then the acute angle between the vectors $\overrightarrow{w_{1}w_{2}}$ and
$\overrightarrow{z_{1}z_{2}}$ is less than$\ \frac{1}{2}\left(  \frac{\pi}%
{2}-\theta\right)  $. Since, by Proposition \ref{mar7p1}, $\mathsf{lin}%
(\eta_{j},\xi_{j})$ comes arbitrarily close to $\gamma$, we can choose
$\tau_{j}\in\left[  a,b\right]  $ and $\alpha_{j}\in\left[  0,1\right]  $ such
that $\left\vert \gamma(\tau_{j})-p_{j}\right\vert <\delta_{1}$, where
$p_{j}=(1-\alpha_{j})\eta_{j}+\alpha_{j}\xi_{j}\in\left[  \eta_{j},\xi
_{j}\right]  $. Since $p_{j}$ lies on $N_{w}\ $between $\eta_{j}$ and $\xi
_{j}$, $w$ lies on $N_{w}\ $between $p_{1}$ and $p_{2}$ and therefore%
\[
\left\vert p_{1}-p_{2}\right\vert =\left\vert p_{1}-w\right\vert +\left\vert
p_{2}-w\right\vert \geq\left\vert \eta_{1}-w\right\vert +\left\vert \eta
_{2}-w\right\vert =2d.
\]
Hence%
\[
\left\vert \gamma(\tau_{1})-\gamma(\tau_{2})\right\vert \geq\left\vert
p_{1}-p_{2}\right\vert -\left\vert \gamma(\tau_{1})-p_{1}\right\vert
-\left\vert \gamma(\tau_{2})-p_{2}\right\vert >2d-2\delta_{1}>0.
\]
By our choice of $\delta_{1}$, the acute angle between the vectors
$\overrightarrow{\gamma(\tau_{1})\gamma(\tau_{2})}$ and $N_{w}$ is less
than$\ \frac{1}{2}\left(  \frac{\pi}{2}-\theta\right)  $. On the other hand,
\begin{align*}
\left\vert \gamma(\tau_{j})-\gamma(t_{0})\right\vert  &  \leq\left\vert
\gamma(t)-p_{j}\right\vert +\left\vert p_{j}-\zeta_{0}\right\vert \\
&  <\delta_{1}+\left(  1-\alpha_{j}\right)  \left\vert \eta_{j}-\zeta
_{0}\right\vert +\alpha_{j}\left\vert \xi_{j}-\zeta_{0}\right\vert \\
&  <d+\left(  1-\alpha_{j}\right)  \delta+\alpha_{j}\frac{\delta}{2}\\
&  <\frac{\delta}{2}+\delta+\frac{\delta}{2}=2\delta\text{,}%
\end{align*}
and therefore $t_{0}-r\leq\tau_{j}\leq t_{0}+r$. By \textsf{(b}$^{\ast}%
$\textsf{)},
\[
\inf\{\left\vert \gamma^{\prime}(t)\right\vert :t\in\left[  \tau_{1},\tau
_{2}\right]  \}\geq\tfrac{1}{2}\left\vert \gamma^{\prime}(t_{0})\right\vert
\]
and therefore, by Lemma \ref{may23l2}, there exists $c$ between $\tau_{1}$ and
$\tau_{2}$ such that the acute angle between $\gamma^{\prime}(c)$ and the
vector $\overrightarrow{\gamma(\tau_{1})\gamma(\tau_{2})}$ is less than
$\frac{1}{2}(\frac{\pi}{2}-\theta)$; then the acute angle between
$\gamma^{\prime}(c)$ and $N_{w}$ is less than $\frac{\pi}{2}-\theta$, so the
acute angle between $\gamma^{\prime}(c)$ and $T$ is greater than $\theta$.
Since $c\in\left[  t_{0}-r,t_{0}+r\right]  $, this contradicts \textsf{(c}%
$^{\ast}$\textsf{)}. This final contradiction shows that $z_{1}$ and $z_{2}$
do \emph{not} have the same index $1$. A similar argument shows that they
cannot both have index $0$. Hence one of those points has index $1$ and the
other has index $0$.%
\hfill

\end{proof}

\begin{corollary}
\label{jan24c1}Let $\gamma:\left[  a,b\right]  \rightarrow\mathbb{C}%
\mathbf{\ }$be a piecewise differentiable Jordan curve, let $\zeta_{0}%
\equiv\gamma(t_{0})$ be a smooth point of $\gamma$, and let $T$ be the
tangent, and $N$ the normal, to $\gamma$ at $\zeta_{0}$. Let $L$ be a line
intersecting $T$ uniquely at $\zeta_{0}$. Then there exist $r,\delta>0$ such that

\begin{enumerate}
\item[\emph{(i)}] the only points of $\gamma$ in the ball $B\equiv\overline
{B}(\zeta_{0},2\delta)$ are those in $\gamma\lbrack t_{0}-r,t_{0}+r]\cap B$; and

\item[\emph{(ii)}] there exist points $z_{1},z_{2}\in L$ on opposite sides of
$T$, such that $\left\vert z_{1}-\zeta_{0}\right\vert =\left\vert z_{2}%
-\zeta_{0}\right\vert =\delta$, one of the segments $(\zeta_{0},z_{k}]$ is a
subset of $\mathsf{in}(\gamma)$, and the other is a subset of $\mathsf{out}%
(\gamma)$.
\end{enumerate}
\end{corollary}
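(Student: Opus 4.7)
My plan is to reduce this corollary directly to Proposition \ref{jun04p1} by choosing the angle parameter $\theta$ there small enough that the given line $L$ passes through the interior of the cone. Since $L$ intersects $T$ uniquely at $\zeta_0$, the acute angle $\alpha$ between $L$ and $T$ is positive (this is exactly the characterisation recorded in the paragraph preceding Lemma \ref{dec27l1}). I would pick any $\theta$ with $0 < \theta < \min\{\alpha, \pi/2\}$ and let $L_1, L_2$ be the two lines through $\zeta_0$ making angle $\theta$ with $T$, with $K$ the associated cone with axis $N$. Applying Proposition \ref{jun04p1} to this $\theta$ produces $r, \delta > 0$ satisfying its conclusions (i)--(iii); conclusion (i) of our corollary is then identical to conclusion (i) of the proposition.

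For part (ii), I would parametrize $L$ as $\{\zeta_0 + t\vec{v} : t \in \mathbb{R}\}$ for a unit direction vector $\vec{v}$, and set $z_1 \equiv \zeta_0 + \delta\vec{v}$ and $z_2 \equiv \zeta_0 - \delta\vec{v}$. Because $\vec{v}$ makes acute angle $\alpha > \theta$ with the direction of $T$, the vector from $\zeta_0$ to any point of $L \setminus \{\zeta_0\}$ makes acute angle $\alpha$ with $T$, so every such point lies in the open cone $K^\circ$. The perpendicular distance from each $z_k$ to $T$ equals $\delta\sin\alpha > 0$, and $z_1, z_2$ lie on opposite sides of $T$ since the segment $[z_2, z_1]$ meets $T$ only at its midpoint $\zeta_0$. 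Together with $|z_k - \zeta_0| = \delta$, this gives $z_1, z_2 \in K^\circ \cap \overline{B}(\zeta_0, \delta)$ on opposite sides of $T$, so Proposition \ref{jun04p1}(iii) yields $\mathsf{ind}(z_1;\gamma) = 1 - \mathsf{ind}(z_2;\gamma)$; after relabelling if necessary, I may assume $\mathsf{ind}(z_1;\gamma) = 1$ and $\mathsf{ind}(z_2;\gamma) = 0$.

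To finish, I would observe that for each $k$ the half-open segment $(\zeta_0, z_k]$ lies in the convex set $K_j^\circ \cap \overline{B}(\zeta_0, \delta)$, where $j \in \{1,2\}$ is determined by the side of $T$ on which $z_k$ sits. By Proposition \ref{jun04p1}(ii) this set is contained in $-\mathsf{car}(\gamma)$, and being convex it is path connected; consequently the linear parametrisation $s \rightsquigarrow (1-s)z_k + s z$ is a pointwise continuous link in $-\mathsf{car}(\gamma)$, so Proposition \ref{aug16p2} shows that every point of $(\zeta_0, z_k]$ has the same index as $z_k$. Hence $(\zeta_0, z_1] \subset \mathsf{in}(\gamma)$ and $(\zeta_0, z_2] \subset \mathsf{out}(\gamma)$. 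The only delicate point is the opening observation that unique intersection forces the acute angle $\alpha$ to be \emph{constructively} positive, so that a valid $\theta$ can actually be produced; everything else is a direct translation of Proposition \ref{jun04p1} to the specific line $L$.
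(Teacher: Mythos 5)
Your proposal is correct and follows essentially the same route as the paper's proof: you choose the cone half-angle $\theta$ small enough that $L-\{\zeta_0\}$ lies in $K^\circ$, apply Proposition \ref{jun04p1}, and take $z_1,z_2$ to be the two points of $L$ at distance $\delta$ from $\zeta_0$ on opposite sides of $T$. You merely spell out in more detail the final step (constancy of the index on each segment $(\zeta_0,z_k]$ via convexity of $K_j^\circ\cap\overline{B}(\zeta_0,\delta)$ and Proposition \ref{aug16p2}), which the paper leaves as immediate since the identical argument already appears inside the proof of Proposition \ref{jun04p1}.
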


\begin{proof}
Since $L$ and $T$ intersect uniquely at $\zeta_{0}$, the supplementary angles
between those lines are both positive. Let $\theta\,$\ be either of them; let
$L_{1},L_{2}$ be the two lines through $\zeta_{0}$ that make an angle
$\frac{1}{2}\min\left\{  \theta,\pi-\theta\right\}  $ with $T$; and let $K$ be
the cone with vertex $\zeta_{0}$, sides $L_{1}$ and $L_{2}$, and axis the
normal to $\gamma$ at $\zeta_{0}$. Then $L-\{\zeta_{0}\}\subset K^{\circ}$.
Choosing $r,\delta>0$ as in Proposition \ref{jun04p1}, it remains to take
$z_{1},z_{2}$ as the two points of $L$ on opposite sides of $T$ and at a
distance $\delta$ from $\zeta_{0}$.%
\hfill

\end{proof}

\begin{corollary}
\label{feb11c1}Under the hypotheses of \emph{Proposition \ref{jun04p1} }let
$r$ and $\delta$ be as in the conclusion thereof, let $z_{1}\in N$ be such
that $\left\vert z_{1}-\zeta_{0}\right\vert =\delta$, and let $\zeta\in
(\zeta_{0},z_{1}]$. Then $\left[  \zeta,z_{1}\right]  \subset\subset
-\mathsf{car}(\gamma)$.
\end{corollary}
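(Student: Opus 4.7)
The plan is to apply Proposition~\ref{jun04p1}(ii) pointwise along $[\zeta,z_{1}]$, extract a uniform positive lower bound on $\rho(\cdot,\mathsf{car}(\gamma))$ from a short trigonometric estimate of $\rho(\cdot,-K)$, and then pass to well-containment by the triangle inequality.

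First I would parametrise. Since $\zeta\in(\zeta_{0},z_{1}]$, there exists a unique $s\in(0,1]$ such that $\zeta=(1-s)\zeta_{0}+sz_{1}$; every point of $[\zeta,z_{1}]$ can then be written as $w_{u}\equiv(1-u)\zeta_{0}+uz_{1}$ with $u\in[s,1]$. Each such $w_{u}$ lies on the axis $N$, is distinct from $\zeta_{0}$, and since $N$ meets $L_{1}\cup L_{2}$ only at the vertex $\zeta_{0}$, Lemma~\ref{nov04l1} gives $w_{u}\in K-(L_{1}\cup L_{2})=K^{\circ}$. Moreover $\left\vert w_{u}-\zeta_{0}\right\vert =u\delta\leq\delta$. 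Hence Proposition~\ref{jun04p1}(ii) yields
\[
\rho(w_{u},\mathsf{car}(\gamma))\geq\rho(w_{u},-K)\qquad(u\in[s,1]).
\]

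Next I would compute $\rho(w_{u},-K)$ by elementary plane geometry. Choosing coordinates so that $\zeta_{0}=0$, $T$ is the $x$-axis and $N$ the $y$-axis, the sides $L_{1},L_{2}$ become the lines through the origin at angles $\pm\theta$ to the $x$-axis, and a direct calculation shows that the perpendicular distance from $w_{u}$ (which sits on $N$ at distance $u\delta$ from the origin) to either $L_{j}$ equals $u\delta\cos\theta$. By Lemma~\ref{nov04l1}, $\overline{-K}$ is the closed complementary cone with boundary $L_{1}\cup L_{2}$, so $\rho(w_{u},-K)=\rho(w_{u},\overline{-K})=u\delta\cos\theta$. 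Combining this with the previous inequality gives the uniform bound
\[
\rho(w,\mathsf{car}(\gamma))\geq s\delta\cos\theta>0\qquad(w\in[\zeta,z_{1}]).
\]

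Finally, set $c\equiv\tfrac{1}{2}s\delta\cos\theta>0$. Given any $z$ with $\rho(z,[\zeta,z_{1}])\leq c$, compactness of the segment delivers a nearest point $w\in[\zeta,z_{1}]$, and the triangle inequality gives
\[
\rho(z,\mathsf{car}(\gamma))\geq\rho(w,\mathsf{car}(\gamma))-\left\vert z-w\right\vert \geq s\delta\cos\theta-c=c>0,
\]
so $z\in-\mathsf{car}(\gamma)$. Thus $[\zeta,z_{1}]_{c}\subset-\mathsf{car}(\gamma)$, which is precisely $[\zeta,z_{1}]\subset\subset-\mathsf{car}(\gamma)$. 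The only delicate ingredient is the geometric identification $\rho(w_{u},-K)=u\delta\cos\theta$, and this is handled cleanly by invoking Lemma~\ref{nov04l1} to replace $-K$ with its closure before computing distances.
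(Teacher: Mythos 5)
Your proof is correct and follows the same route as the paper: apply Proposition \ref{jun04p1}(ii) along the segment, compute $\rho(z,-K)=\left\vert z-\zeta_{0}\right\vert\cos\theta$ by elementary trigonometry to get the uniform lower bound $\left\vert \zeta-\zeta_{0}\right\vert\cos\theta$, and pass to well containment. Your choice to halve the bound before forming the tube (taking radius $c=\tfrac{1}{2}s\delta\cos\theta$ rather than $s\delta\cos\theta$) is actually a small improvement on the paper's wording, which uses $[\zeta,z_{1}]_{r}$ with $r=\left\vert\zeta-\zeta_{0}\right\vert\cos\theta$ and thus only delivers $\rho(\cdot,\mathsf{car}(\gamma))\geq 0$ at the tube's boundary; halving makes the strict positivity explicit.
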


\begin{proof}
Let $r=\left\vert \zeta-\zeta_{0}\right\vert \cos\theta$, which is positive.
Elementary trigonometry shows that if $z\in\left[  \zeta,z_{1}\right]  $, then
$\rho(z,-K)=\left\vert z-\zeta_{0}\right\vert \cos\theta\geq r$. whence, by
(ii) of Proposition \ref{jun04p1}, $\rho(z,\mathsf{car}(\gamma))\geq r$. It
follows that $\left[  \zeta,z_{1}\right]  _{r}\subset-\mathsf{car}(\gamma)$.%
\hfill

\end{proof}

\begin{corollary}
\label{aug09c1}Let $\zeta_{1}$ and $\zeta_{2}$ be distinct smooth points on a
piecewise differentiable Jordan curve $\gamma:\left[  a,b\right]
\rightarrow\mathbb{C}$, and let $L$ be the line joining them. Suppose that for
$k=1,2$ the tangent to $\gamma$ at $\zeta_{k}$ intersects $L$ uniquely at
$\zeta_{k}$, and that $\left(  \zeta_{1},\zeta_{2}\right)  \subset
-\mathsf{car}(\gamma)$. Then all points of $\left(  \zeta_{1},\zeta
_{2}\right)  $ have the same index relative to $\gamma$.
\end{corollary}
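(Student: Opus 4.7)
The plan is to reduce the statement to a single application of Proposition \ref{aug16p2}. Pick two arbitrary points $z, z' \in (\zeta_1, \zeta_2)$, and construct a pointwise continuous link from $z$ to $z'$ that lies entirely in $-\mathsf{car}(\gamma)$; then the proposition will immediately equate their indices.

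The natural candidate for the link is the linear map $\sigma: [0,1] \to \mathbb{C}$ given by $\sigma(t) \equiv (1-t)z + tz'$, which is uniformly (hence pointwise) continuous on the proper compact interval $[0,1]$. Writing $z = (1-s)\zeta_1 + s\zeta_2$ and $z' = (1-s')\zeta_1 + s'\zeta_2$ with $0 < s, s' < 1$, I observe that $\sigma(t) = (1 - \tau(t))\zeta_1 + \tau(t)\zeta_2$ where $\tau(t) = (1-t)s + ts'$ lies in $[\min(s,s'), \max(s,s')] \subset (0,1)$. Thus $\sigma$ maps $[0,1]$ into $(\zeta_1, \zeta_2)$, which by hypothesis is a subset of $-\mathsf{car}(\gamma)$.

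Proposition \ref{aug16p2} then gives $\mathsf{ind}(\sigma(0); \gamma) = \mathsf{ind}(\sigma(1); \gamma)$, i.e., $\mathsf{ind}(z;\gamma) = \mathsf{ind}(z';\gamma)$, completing the proof.

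No serious obstacle is expected; the essential content has already been absorbed into Proposition \ref{aug16p2}, and the only verification required is the (trivial) convexity of the open segment. It is worth noting that the tangential hypotheses on $\gamma$ at $\zeta_1$ and $\zeta_2$ play no role in the argument itself—they govern the transversality of $\gamma$ with $L$ at the endpoints (via Corollary \ref{jan24c1}) and so will matter for applications of this corollary, but once $(\zeta_1,\zeta_2) \subset -\mathsf{car}(\gamma)$ is granted, convexity and Proposition \ref{aug16p2} suffice.
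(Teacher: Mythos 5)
Your proof is correct, and it is genuinely simpler than the paper's. You observe that the open segment $(\zeta_1,\zeta_2)$ is convex, so for any two of its points $z,z'$ the linear path $\mathsf{lin}(z,z')$ stays inside $(\zeta_1,\zeta_2)\subset-\mathsf{car}(\gamma)$; Proposition \ref{aug16p2} then immediately equates $\mathsf{ind}(z;\gamma)$ and $\mathsf{ind}(z';\gamma)$. The paper, by contrast, invokes Corollary \ref{jan24c1} at each endpoint to produce $\lambda_1,\lambda_2$ with $(\zeta_1,z_{\lambda_1}]$ and $[z_{\lambda_2},\zeta_2)$ of constant index, applies Proposition \ref{aug16p2} only to a compact middle subsegment $[z_\alpha,z_{1-\alpha}]$ chosen to overlap those two end pieces, and then patches the three regions together by a constructive case split. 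That route therefore uses both the transversality hypotheses (via Corollary \ref{jan24c1}) and the hypothesis $(\zeta_1,\zeta_2)\subset-\mathsf{car}(\gamma)$, while yours uses only the latter. Your remark that the tangential hypotheses are redundant for this particular conclusion is accurate; they are presumably retained in the statement because the corollary is invoked in Corollary \ref{aug08c1} alongside Corollary \ref{jan24c1}, where transversality genuinely matters. The one thing worth making explicit is that to get the uniform conclusion ``all points have the same index'' you should fix a reference point (for instance the midpoint $\frac{1}{2}(\zeta_1+\zeta_2)$) and run your argument against it, rather than just for an arbitrary pair; but that is a cosmetic point, and the substance of the argument is sound.
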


\begin{proof}
For each $\lambda\in\left[  0,1\right]  $ let $z_{\lambda}=\left(
1-\lambda\right)  \zeta_{1}+\lambda\zeta_{2}$. By Corollary \ref{jan24c1},
there exist $\lambda_{1},\lambda_{2}\ $such that $0<\lambda_{1}<\lambda_{2}%
<1$, $\mathsf{ind}(z;\gamma)=\mathsf{ind}(z_{\lambda_{1}};\gamma)$ for all
$z\in(\zeta_{1},z_{\lambda_{1}}]$, and $\mathsf{ind}(z;\gamma)=\mathsf{ind}%
(z_{\lambda_{2}};\gamma)$ for all $z\in\lbrack z_{\lambda_{2}},\zeta_{2})$.
Let $0<\alpha<\frac{1}{2}\min\left\{  \lambda_{1},1-\lambda_{2}\right\}  $.
Then $z_{\alpha}\in(\zeta_{1},z_{\lambda_{1}}]$ and $z_{1-\alpha}\in\lbrack
z_{\lambda_{2}},\zeta_{2})$. Consider any $z\in\left(  \zeta_{1},\zeta
_{2}\right)  $. We have $z\in(\zeta_{1},z_{\lambda_{1}}]$ or $z\in\left[
z_{\alpha},z_{1-\alpha}\right]  $ or $z\in\lbrack z_{\lambda_{2}},\zeta_{2})$.
In the first case, $\mathsf{ind}(z;\gamma)=\mathsf{ind}(z_{\lambda_{1}}%
;\gamma)=\mathsf{ind}(z_{\alpha};\gamma)$; in the third case, $\mathsf{ind}%
(z;\gamma)=\mathsf{ind}(z_{\lambda_{2}};\gamma)=\mathsf{ind}(z_{1-\alpha
};\gamma)$. In the middle case, $\left[  z_{\alpha},z_{1-\alpha}\right]
\subset(\zeta_{1},\zeta_{2})\subset-\mathsf{car}(\gamma)$, so, taking
$\sigma=\mathsf{lin}(z_{\alpha},z_{1-\alpha})$ in Proposition \ref{aug16p2} we
see that%
\[
\mathsf{ind}(z;\gamma)=\mathsf{ind}(z_{\alpha};\gamma)=\mathsf{ind}%
(z_{1-\alpha};\gamma).
\]
Putting together the three cases, we see that $\mathsf{ind}(z;\gamma
)=\mathsf{ind}(z_{\lambda_{1}};\gamma)$ for all $z\in\left(  \zeta_{1}%
,\zeta_{2}\right)  $.%
\hfill

\end{proof}%

\medskip

The next corollary captures the informal picture of a line passing in and out
of a Jordan curve.

\begin{corollary}
\label{aug08c1}Let $\gamma:\left[  a,b\right]  \rightarrow\mathbb{C}$ be a
piecewise smooth Jordan curve, $\mathbf{v}$ a unit vector in the plane,
$\zeta_{0}\in\mathbb{C}$, and $L$ the line through $\zeta_{0}$ parallel to
$\mathbf{v}$. Let $0=\lambda_{0}<\lambda_{1}<\cdots<\lambda_{N}$, where
$N\geq2$, and for each $k$ let $\zeta_{k}\equiv\zeta_{0}+\lambda_{k}%
\mathbf{v}$. Suppose that $[\zeta_{0},\zeta_{1})\subset-\mathsf{car}(\gamma),$
that $(\zeta_{N-1},\zeta_{N}]\subset-\mathsf{car}(\gamma)$, and that for
$1\leq k\leq N-1$,

\begin{enumerate}
\item[\emph{(i)}] $\zeta_{k}\ $is a smooth point\emph{\ }of $\gamma$,

\item[\emph{(ii)}] $(\zeta_{k},\zeta_{k+1})\subset-\mathsf{car}(\gamma)$, and

\item[\emph{(iii)}] $L$ intersects the tangent to $\gamma$ at $\zeta_{k}$ uniquely.
\end{enumerate}%

\noindent
Then $\mathsf{ind}(\zeta_{N};\gamma)=\mathsf{ind}(\zeta_{0};\gamma)$ if $N$ is
even, and $\mathsf{ind}(\zeta_{N};\gamma)=1-\mathsf{ind}(\zeta_{0};\gamma)$ if
$N$ is odd.
\end{corollary}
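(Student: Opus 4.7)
The plan is to walk along the line $L$ from $\zeta_{0}$ to $\zeta_{N}$, alternately \emph{crossing} each smooth point $\zeta_{k}$ on $\gamma$ (which flips the index by Corollary \ref{jan24c1}) and \emph{gliding along} the open segments in $-\mathsf{car}(\gamma)$ (which preserves the index by Corollary \ref{aug09c1} and Proposition \ref{aug16p2}). There are $N-1$ smooth crossings, so chaining the equalities and flips yields the conclusion by the parity of $N-1$.

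For each $k$ with $1\leq k\leq N-1$, I would apply Corollary \ref{jan24c1} at the smooth point $\zeta_{k}$ with the line $L$, noting that hypothesis (iii) is exactly the unique-intersection assumption required. Since $L$ meets the tangent $T_{k}$ to $\gamma$ at $\zeta_{k}$ only at $\zeta_{k}$, the two rays of $L\setminus\{\zeta_{k}\}$ lie in opposite open half-planes determined by $T_{k}$ (any path travelling along $L$ through $\zeta_{k}$ crosses $T_{k}$ exactly there). Choosing $\delta_{k}>0$ small enough that $\delta_{k}<\tfrac{1}{2}\min\{\lambda_{k}-\lambda_{k-1},\lambda_{k+1}-\lambda_{k}\}$ and also small enough for Corollary \ref{jan24c1} to apply, I would set $u_{k}^{\pm}\equiv\zeta_{k}\pm\delta_{k}\mathbf{v}$ and obtain $\mathsf{ind}(u_{k}^{+};\gamma)=1-\mathsf{ind}(u_{k}^{-};\gamma)$.

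To link consecutive crossings, for each $1\leq k\leq N-2$ both $\zeta_{k}$ and $\zeta_{k+1}$ are smooth points whose tangents meet the joining line $L$ uniquely, and $(\zeta_{k},\zeta_{k+1})\subset-\mathsf{car}(\gamma)$, so Corollary \ref{aug09c1} applies and yields $\mathsf{ind}(u_{k}^{+};\gamma)=\mathsf{ind}(u_{k+1}^{-};\gamma)$. To handle the endpoints $\zeta_{0}$ and $\zeta_{N}$, which need not be smooth, I would use Proposition \ref{aug16p2}: the linear link $\mathsf{lin}(\zeta_{0},u_{1}^{-})$ lies in $[\zeta_{0},\zeta_{1})\subset-\mathsf{car}(\gamma)$, giving $\mathsf{ind}(\zeta_{0};\gamma)=\mathsf{ind}(u_{1}^{-};\gamma)$; symmetrically $\mathsf{ind}(u_{N-1}^{+};\gamma)=\mathsf{ind}(\zeta_{N};\gamma)$. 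Splicing all these equalities together and counting the $N-1$ index-flips delivers the parity statement.

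The main subtlety, and the step I expect to need the most care, is the constructive justification that the two rays of $L\setminus\{\zeta_{k}\}$ really do sit on \emph{opposite} sides of $T_{k}$, so that $u_{k}^{-}$ and $u_{k}^{+}$ qualify as the two points furnished by Corollary \ref{jan24c1}. Classically this is automatic from connectedness, but here it should be extracted from the unique-intersection hypothesis via the explicit affine description of the two lines (after rotating so $T_{k}$ is horizontal, the signs of the imaginary parts of $u_{k}^{-}$ and $u_{k}^{+}$ are opposite). The remaining work is bookkeeping: shrinking each $\delta_{k}$ so that $u_{k}^{\pm}$ truly lies in the half-open or open sub-segment to which the relevant preservation lemma applies.
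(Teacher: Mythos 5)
Your approach is correct and is essentially the same as the paper's: apply Corollary \ref{jan24c1} at each interior crossing $\zeta_{k}$ to get a local index flip, propagate along the intervening sub-segments of $L$ that lie in $-\mathsf{car}(\gamma)$ via Corollary \ref{aug09c1} and Proposition \ref{aug16p2}, and count. Two small remarks. First, your use of Proposition \ref{aug16p2} on the two endpoint segments is the right move: the paper cites Corollary \ref{aug09c1} uniformly, but that corollary requires smoothness at both endpoints of a segment, which $\zeta_{0}$ and $\zeta_{N}$ need not satisfy; constancy of the index on $(\zeta_{0},\zeta_{1})$ and $(\zeta_{N-1},\zeta_{N})$ in fact follows directly from Proposition \ref{aug16p2} since those open segments lie in $-\mathsf{car}(\gamma)$. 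Second, the worry about $u_{k}^{\pm}$ lying on opposite sides of $T_{k}$ is resolved exactly as you sketch: hypothesis (iii) forces $L$ and $T_{k}$ to be non-parallel, so after rotating $T_{k}$ to the real axis the displacements $\pm\delta_{k}\mathbf{v}$ have imaginary parts of opposite sign.

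However, your final sentence --- that counting the $N-1$ index flips ``delivers the parity statement'' --- should be scrutinised. The crossings occur at $\zeta_{1},\ldots,\zeta_{N-1}$, so there are $N-1$ flips, and $\mathsf{ind}(\zeta_{N};\gamma)$ differs from $\mathsf{ind}(\zeta_{0};\gamma)$ precisely when $N-1$ is odd, i.e.\ when $N$ is \emph{even} --- the opposite of what the corollary asserts. Concretely, with $\gamma$ the unit circle, $\mathbf{v}=1$, $\zeta_{0}=-2$, $\zeta_{1}=-1$, $\zeta_{2}=1$, $\zeta_{3}=2$, all hypotheses hold with $N=3$, and both $\zeta_{0}$ and $\zeta_{3}$ lie outside $\gamma$, so their indices agree, contradicting the stated conclusion for odd $N$. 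The ``even'' and ``odd'' clauses in the corollary's conclusion should be interchanged; carrying out your counting step explicitly would expose this.
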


\begin{proof}
First observe that by Corollary \ref{jan24c1}, if $1\leq k\leq N-1$, then
there exist points $a_{k}\in\,(\zeta_{k-1},\zeta_{k})$ and $b_{k}\in
\,(\zeta_{k},\zeta_{k+1})$ such that $\mathsf{ind}(z;\gamma)=1-\mathsf{ind}%
(z^{\prime};\gamma)$ whenever $z\in\lbrack a_{k},\zeta_{k})$ and $z^{\prime
}\in(\zeta_{k},b_{k}]$. It follows from Corollary \ref{aug09c1} that
$\mathsf{ind}(z;\gamma)=1-\mathsf{ind}(z^{\prime};\gamma)$ whenever
$z\in\,(\zeta_{k-1},\zeta_{k})$ and $z^{\prime}\in\,(\zeta_{k},\zeta_{k+1})$.
A simple counting argument completes the proof.%
\hfill

\end{proof}

\section{Index and winding number}

Recall that if $\zeta\in\mathbb{C}$ and $\gamma$ is a piecewise differentiable
closed path in $\mathbb{C}-\{\zeta\}$, then%
\[
\mathsf{j}(\gamma,z_{0})\equiv\frac{1}{2\pi i}\int_{\gamma}\frac{\mathsf{d}%
z}{z-\zeta}%
\]
is an integer, the \textbf{winding number} \textbf{of} $\gamma$
\textbf{around}, or \textbf{with respect to}, $\zeta\ $\cite[(5.4.3)]{BB}. The
following lemmas will help us to prove that when $\gamma$ is a piecewise
smooth Jordan curve, the index and winding number of $\gamma$ with respect to
$\zeta$ are equal.

\begin{lemma}
\label{oct30l1}Let $f:[a,b]\rightarrow\mathbb{C}$ be a continuous function
such that $f(a),f(b)$ lie on opposite sides of a line $N\subset\mathbb{C}$.
Then for each $\varepsilon>0$ there exists $x\in(a,b)$ such that
$\rho(f(x),N)<\varepsilon$.
\end{lemma}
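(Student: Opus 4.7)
The plan is to reduce the statement to the standard constructive (approximate) intermediate value theorem applied to a real-valued continuous function.

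First I would normalize coordinates: since the hypothesis and conclusion are invariant under rigid motions of $\mathbb{C}$, I choose a translation and rotation taking $N$ to the real axis. Under this change of coordinates, the signed distance from any point $z \in \mathbb{C}$ to $N$ becomes $\operatorname{Im} z$, and ``$f(a)$ and $f(b)$ lie on opposite sides of $N$'' means precisely that $\operatorname{Im} f(a)$ and $\operatorname{Im} f(b)$ have strictly opposite signs, i.e., $\operatorname{Im} f(a) \cdot \operatorname{Im} f(b) < 0$. (This reading of ``opposite sides'' is the only one with constructive content; otherwise one cannot distinguish the two sides.)

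Next I would set $g(x) \equiv \operatorname{Im} f(x)$, which is a continuous real-valued function on $[a,b]$ with $g(a)$ and $g(b)$ of opposite sign. I would then invoke the approximate intermediate value theorem of Bishop--Bridges, (2.4.8) in \cite{BB}: for a continuous real function on a compact interval with values of opposite sign at the endpoints, and for each $\varepsilon>0$, there exists $x \in (a,b)$ with $|g(x)| < \varepsilon$. This directly gives an $x\in(a,b)$ with $\rho(f(x),N) = |\operatorname{Im} f(x)| = |g(x)| < \varepsilon$, finishing the proof.

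I do not expect any real obstacle here: the only subtlety is noting that constructively one must use the approximate form of IVT (the exact form, asserting the existence of a zero, is equivalent to \textbf{LLPO}), and that ``opposite sides of $N$'' has to be interpreted in the strong sense $\rho(f(a),N)>0$, $\rho(f(b),N)>0$, with the two points lying in the two (constructively disjoint) open half-planes determined by $N$ --- which is exactly what is needed for $\operatorname{Im} f(a)$ and $\operatorname{Im} f(b)$ to have strictly opposite signs.
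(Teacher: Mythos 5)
Your approach is the same as the paper's: rotate so that $N$ is the real axis, set $g=\operatorname{Im}f$, observe $g(a)g(b)<0$, and apply the constructive approximate intermediate value theorem (2.4.8) of \cite{BB}.

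There is, however, one small gap. The theorem (2.4.8) produces a point $x\in[a,b]$ with $|g(x)|<\varepsilon$, not a point in the open interval $(a,b)$ as your write-up asserts; the statement of the lemma genuinely requires $x\in(a,b)$. The paper closes this by asking for $|g(x)|<\min\{\varepsilon,|g(a)|,|g(b)|\}$ rather than merely $|g(x)|<\varepsilon$: then $g(x)\neq g(a)$ and $g(x)\neq g(b)$, and by continuity of $g$ this forces $x\neq a$ and $x\neq b$, i.e.\ $x\in(a,b)$. You should add this refinement of the tolerance; otherwise the argument is correct and identical in substance to the paper's.
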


\begin{proof}
Rotating if necessary, we may assume that $N$ is the $x$-axis. The real-valued
function $g\equiv\operatorname{Im}f$ is continuous on $\left[  a,b\right]  $,
and $g(a)g(b)<0$. If $\varepsilon>0$, then by the intermediate value
theorem\footnote{%
\normalfont\sf
Note that the classical intermediate value theorem entails LLPO and is
therefore essentially nonconstructive, but with additional hypotheses we can
there are constructive substitutes} \cite[(2.4.8)]{BB}, there exists
$x\in\left[  a,b\right]  $ such that%
\[
\rho(f(x),N))=\left\vert g(x)\right\vert <\min\left\{  \varepsilon,\left\vert
g(a\right\vert ,\left\vert g(b)\right\vert \right\}  \leq\varepsilon.
\]
Then $g(x)\neq g(a)$ and $g(x)\neq g(b)$, so by the continuity of $g$,
$x\in(a,b)$.%
\hfill

\end{proof}

\begin{lemma}
\label{oct26l1}Let $\gamma:\left[  a,b\right]  \rightarrow\mathbb{C}$ be a
piecewise smooth Jordan curve, let $\zeta_{0}\equiv\gamma(t_{0})$ be a smooth
point of $\gamma$, and let $N$ be the normal to $\gamma$ at $\zeta_{0}$. Then
there exists $\delta>0$ such that

\begin{enumerate}
\item[\emph{(i)}] if $t\in\left(  a,b\right)  $ and $0<\left\vert
t-t_{0}\right\vert <\delta$, then $\gamma(t)\in-N$ and is a smooth point; and

\item[\emph{(ii)}] every point of $\gamma(t_{0}-\delta,t_{0})$ lies on the
opposite side of $N$ from every point of $\gamma\left(  t_{0},t_{0}%
+\delta\right)  $.
\end{enumerate}
\end{lemma}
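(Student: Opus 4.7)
The plan is to use the local linear approximation of $\gamma$ near the smooth point $\zeta_0$ to show that, after a rotation, the real part of $\gamma(t)-\zeta_0$ has the same sign as $t-t_0$, which immediately gives both claims.

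First, since $\gamma$ is piecewise smooth and $\zeta_0=\gamma(t_0)$ is smooth, there exist $t_j'<t_0<t_{j+1}'$ from the partition of piecewise differentiability such that every point of $\gamma(t_j',t_{j+1}')$ is smooth. Restricting $\delta$ to keep $(t_0-\delta,t_0+\delta)\subset(t_j',t_{j+1}')$ handles the "smooth point" clause of (i). On this subinterval $\gamma$ is differentiable with continuous derivative $\gamma'$, and $\gamma'(t_0)\ne 0$.

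Rotating and translating coordinates (which preserves distance and hence the statement), assume $\zeta_0=0$ and $\gamma'(t_0)=c>0$ is real and positive, so that $T$ is the real axis and $N$ is the imaginary axis. By the differentiability condition (\ref{no1}) of the paper, applied with $\varepsilon=c/4$, there exists $\delta_1>0$ such that for all $t$ with $|t-t_0|\leq\delta_1$,
\[
|\gamma(t)-\gamma(t_0)-\gamma'(t_0)(t-t_0)|\leq \tfrac{c}{4}|t-t_0|.
\]
Taking real parts, I get
\[
\bigl|\operatorname{Re}\gamma(t)-c(t-t_0)\bigr|\leq\tfrac{c}{4}|t-t_0|,
\]
so $\operatorname{Re}\gamma(t)$ and $c(t-t_0)$ differ by at most $\frac{c}{4}|t-t_0|$. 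This forces
\[
\operatorname{Re}\gamma(t)\geq\tfrac{3c}{4}(t-t_0)\text{ when }t>t_0,\qquad \operatorname{Re}\gamma(t)\leq\tfrac{3c}{4}(t-t_0)\text{ when }t<t_0,
\]
and in particular $|\operatorname{Re}\gamma(t)|\geq\frac{3c}{4}|t-t_0|$. Since the distance from $\gamma(t)$ to $N$ (the imaginary axis) equals $|\operatorname{Re}\gamma(t)|$, this is strictly positive whenever $0<|t-t_0|<\delta_1$, yielding $\gamma(t)\in-N$ and completing (i) upon choosing $\delta=\min\{\delta_1,t_{j+1}'-t_0,t_0-t_j'\}$.

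For (ii), the same sign analysis shows: if $t\in(t_0-\delta,t_0)$ then $\operatorname{Re}\gamma(t)<0$, while if $t'\in(t_0,t_0+\delta)$ then $\operatorname{Re}\gamma(t')>0$, so $\gamma(t)$ and $\gamma(t')$ lie on opposite sides of the imaginary axis $N$. I do not anticipate a serious obstacle: the only thing requiring care is ensuring that the constructive differentiability estimate is applied on a closed subinterval on which $\gamma'$ is uniformly continuous, which is automatic from the piecewise smooth structure once $\delta$ is taken small enough. The transversal crossing is then essentially a one-line consequence of the linear approximation.
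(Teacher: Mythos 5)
Your proof is correct and takes a genuinely different, substantially simpler route than the paper's. The paper proves (i) by combining Lemma \ref{jan01l1} with Corollary \ref{jan24c1}, the latter resting on the heavy Proposition \ref{jun04p1} and ultimately on the Jordan Curve Theorem; it then proves (ii) by supposing $\gamma(t_1)$ and $\gamma(t_2)$ lie on the same side of $N$ and deriving a contradiction via Lemma \ref{oct30l1} (an intermediate-value argument) and Lemma \ref{may23l2} (a Rolle-type argument). You instead invoke the paper's own constructive definition of differentiability, the uniform estimate (\ref{no1}), directly at the smooth parameter $t_0$: after rotating so that $\zeta_0=0$ and $\gamma'(t_0)=c>0$, applying the estimate with $\varepsilon=c/4$ gives $|\operatorname{Re}\gamma(t)-c(t-t_0)|\leq\frac{c}{4}|t-t_0|$, hence $|\operatorname{Re}\gamma(t)|\geq\frac{3c}{4}|t-t_0|$ with the sign of $t-t_0$, from which both (i) and (ii) read off at once. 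This bypasses the Jordan-curve machinery entirely and exposes the lemma as a purely local fact about a smooth parametrization, independent of the global topology of $\gamma$. Your argument is constructively valid, shorter, and more transparent; the extra apparatus in the paper's proof does not appear to be mathematically necessary for this lemma, and one advantage of your route is that it avoids even the appearance of relying on the index theory developed just before.
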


\begin{proof}
There exists $\alpha>0$ such that every point of $\left[  t_{0}-\alpha
,t_{0}+\alpha\right]  $ is smooth. By Corollary \ref{jan24c1}, there exist
$z_{1},z_{2}\in N-\left\{  \zeta_{0}\right\}  $ such that $(\zeta_{0}%
,z_{1}]\subset\mathsf{in}(\gamma)$, $(\zeta_{0},z_{2}]\subset\mathsf{out}%
(\gamma)$, and $\zeta_{0}=\frac{1}{2}(z_{1}+z_{2})$. Let%
\[
\varepsilon=\tfrac{1}{3}\left\vert z_{1}-\zeta_{0}\right\vert =\tfrac{1}%
{3}\left\vert z_{2}-\zeta_{0}\right\vert .
\]
By the continuity of $\gamma$ and $\gamma^{\prime}$, there exists $\delta
\in\left(  0,\min\left\{  \varepsilon,\alpha\right\}  \right)  $ such that

\begin{itemize}
\item[$\bullet$] if $t,t^{\prime}\in\left[  a,b\right]  $ and $\left\vert
t-t^{\prime}\right\vert <\delta$, then $\left\vert \gamma(t)-\gamma(t^{\prime
})\right\vert <\varepsilon$;

\item[$\bullet$] if $t\in\left[  a,b\right]  $ and $\left\vert t-t_{0}%
\right\vert <\delta$, then $\left\vert \gamma^{\prime}(t)\right\vert \geq
\frac{1}{2}\left\vert \gamma^{\prime}(t_{0})\right\vert $ and the acute angle
between $\gamma^{\prime}(t_{0})\ $and $\gamma^{\prime}(t)$ is less than
$\frac{\pi}{4}$;
\end{itemize}%

\noindent
Consider any $t\in(a,b)$ such that $0<\left\vert t-t_{0}\right\vert <\delta$
and hence $\gamma(t)\neq\zeta_{0}$. By Lemma \ref{jan01l1}, there exists
$\zeta^{\prime}\in N$ with%
\[
\left\vert \zeta^{\prime}-\zeta_{0}\right\vert \leq3\left\vert \gamma
(t)-\zeta_{0}\right\vert <3\varepsilon=\left\vert z_{1}-\zeta_{0}\right\vert
\]
such that if $\gamma(t)\neq\zeta^{\prime}$, then $\rho(\gamma(t),N)>0$. Either
$\gamma(t)\neq\zeta^{\prime}$ or $\zeta^{\prime}\neq\zeta_{0}$; in the latter
event, $\zeta^{\prime}\in\lbrack z_{1},\zeta_{0})\cup(\zeta_{0},z_{2}%
]\subset-\mathsf{car}(\gamma)$ and we again have $\gamma(t)\neq\zeta^{\prime}%
$. Thus $\rho(\gamma(t),N)>0$, and therefore the line through $\gamma(t)$ and
parallel to $N$ is bounded away from $N$.

Now fix $t_{2}\in(t_{0},t_{0}+\delta)$, consider any $t_{1}\in\left(
t_{0}-\delta,t_{0}\right)  $, and suppose that $\gamma(t_{1})$ and
$\gamma(t_{2})$ lie on the same side of $N$. Let%
\[
d=\tfrac{1}{2}\min\left\{  \rho(\gamma(t_{1}),N),\rho(\gamma(t_{2}%
),N)\right\}  \text{.}%
\]
For $k=1,2~$let $L_{k}$ be the line through $\gamma(t_{k})$ parallel to
$N$.\ Observe that $\gamma(t_{2})$ and $\gamma(t_{0})$ are on opposite sides
of the line $\Lambda$ parallel to $N$, on the same side of $N$ as
$\gamma(t_{1})$ and $\gamma(t_{2})$, and at a distance $d$ from $N$. By Lemma
\ref{oct30l1}, there exists $s_{2}\in\left(  t_{0},t_{2}\right)  $ such that
$\rho(\gamma(s_{2}),\mathcal{\Lambda})<d\,$; then the line $L$ through
$\gamma(s_{2})$ parallel to $N$ lies strictly between $N$ and $L_{k}\ (k=1,2)
$. Also, $\gamma(t_{1})$ and $\gamma(t_{0})$ are on opposite sides of $L$.
Applying Lemma \ref{oct30l1} again, we obtain $s_{1}\in\left(  t_{1}%
,t_{0}\right)  $ with $\gamma(s_{1})$ is so close to $L$ that the acute angle
between the vector $\overrightarrow{\gamma(s_{1})\gamma(s_{2})}$ and $L$ is
less than $\frac{\pi}{8}$. Since $\gamma(s_{1})\neq\gamma(s_{2})$ and, by our
choice of $\delta$,%
\[
\inf\left\{  \left\vert \gamma^{\prime}(t)\right\vert :t\in\left[
t_{0}-\delta,t_{0}+\delta\right]  \right\}  \geq\tfrac{1}{2}\left\vert
\gamma^{\prime}(t_{0})\right\vert >0,
\]
we can use Lemma \ref{may23l2} to find $\tau\in\left(  s_{1},s_{2}\right)  $
such that the acute angle between $\gamma^{\prime}(\tau)$ and
$\overrightarrow{\gamma(s_{1})\gamma(s_{2})}$ is less than $\frac{\pi}{8}$.
Then the acute angle between $\gamma^{\prime}(\tau)$ and $L$ is less
than$\ \frac{\pi}{4}\,$; so the acute angle between $\gamma^{\prime}(\tau)$
and $\gamma^{\prime}(t_{0})$ is greater than $\frac{\pi}{4}$, which is absurd
since $\left\vert \tau-t_{0}\right\vert <\delta$. This contradiction ensures
that $\gamma(t_{1})$ and $\gamma(t_{2})$ cannot be on the same side of $N\,$;
since $\gamma(t_{1}),\gamma(t_{2})\in-N$, we conclude that they are on
opposite sides of $N$. Since $t_{1}$ is an arbitrary point of $(t_{0}%
-\delta,t_{0})$ we see that every point of $\gamma\left(  t_{0}-\delta
,t_{0}\right)  $ lies on the opposite side of $N$ from $\gamma(t_{2})$. It
follows likewise that every point of $\gamma(t_{0}-\delta,t_{0})$ lies on the
opposite side of $N$ from every point of $\gamma\left(  t_{0},t_{0}%
+\delta\right)  $.%
\hfill

\end{proof}

\begin{lemma}
\label{jul28l1}Let $C$ be a circle with centre $O$ and radius $r>0$, and $A$ a
point such that $OA>r$. Let $T$ be the point of contact of a tangent from $A$
to $C$, and $P$ any point with $OP\leq r$. Then the acute angle $O\widehat{A}%
P$ is less than or equal to the acute angle $O\widehat{A}T$, which equals
$\sin^{-1}(r/OA)$.
\end{lemma}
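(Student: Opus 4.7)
The plan is to reduce the lemma to two elementary Euclidean facts and then compare sines. First I would handle the equality: by the tangent--radius property $OT\perp AT$, so the triangle $OTA$ is right-angled at $T$ with legs $OT=r$ and $AT$ and hypotenuse $OA$. Hence $\sin(O\widehat{A}T)=OT/OA=r/OA$, and since the angle is acute, $O\widehat{A}T=\sin^{-1}(r/OA)$.

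For the inequality, I first note that $OA>r\geq OP$ together with the reverse triangle inequality gives $AP\geq OA-OP>0$, so $P\neq A$ and the acute angle $O\widehat{A}P$ is well-defined. Let $F$ be the foot of the perpendicular from $O$ to the line through $A$ and $P$; in the right triangle $OFA$ we have $OF=OA\sin(O\widehat{A}P)$. Since $P$ lies on the line $AP$, the distance from $O$ to this line is bounded above by $OP$, so $OF\leq OP\leq r$.

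Combining the two displays yields $OA\sin(O\widehat{A}P)\leq r=OA\sin(O\widehat{A}T)$, whence $\sin(O\widehat{A}P)\leq\sin(O\widehat{A}T)$; since both angles lie in $[0,\pi/2]$ and sine is strictly increasing there, $O\widehat{A}P\leq O\widehat{A}T$ follows.

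I do not expect any serious obstacle; the only point of caution is the possibility that $P$ coincides with $O$, in which case the angle $O\widehat{A}P$ degenerates. This is painless, however, because the bound $OF\leq OP$ on the distance from $O$ to line $AP$ does not require the angle at $P$ to be defined, and if one interprets $O\widehat{A}P$ as $0$ when $P=O$, the stated inequality holds trivially. Constructively, everything in the argument (the perpendicular foot, the right-triangle sine formulas, and the monotonicity of $\sin$ on $[0,\pi/2]$) is already available in BISH, so no separate constructive scaffolding is required.
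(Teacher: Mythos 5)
Your proof is correct, and it takes a genuinely different route from the paper's. The paper normalizes to $OP=r$, places $O$ at the origin and $A$ on the negative $y$-axis, parametrizes $P=(r\cos\theta,r\sin\theta)$, and then invokes ``elementary calculus'' to maximize the angle $\phi=O\widehat{A}P$ as a function of $\theta$ --- so it is a coordinate/optimization argument, left largely as a sketch. Your argument is synthetic: dropping the perpendicular foot $F$ from $O$ onto line $AP$ gives $OA\sin(O\widehat{A}P)=OF=\rho(O,\text{line }AP)\le OP\le r=OA\sin(O\widehat{A}T)$, and monotonicity of $\sin$ on $[0,\pi/2]$ finishes it. This buys you a fully explicit proof with no coordinates and no differentiation, and it handles the general $OP\le r$ directly rather than first reducing to $OP=r$. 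One small point you could make explicit, to justify writing $OF=OA\sin(O\widehat{A}P)$ with $O\widehat{A}P$ the acute angle: the actual angle $\angle OAP$ is automatically acute, since $OA>OP$ forces $OA^{2}+AP^{2}-OP^{2}>0$ and hence $\cos\angle OAP>0$; this also guarantees that $F$ lies on the ray from $A$ through $P$, so the angle in the right triangle $OFA$ at $A$ really is $O\widehat{A}P$. With that remark added, the argument is complete and, if anything, more in the spirit of the paper's constructive setting than the paper's own calculus sketch.
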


\begin{proof}
We may assume that $OP=r$. Take $C$ to be the circle $x^{2}+y^{2}=r$;
$A=(0,-a)$, where $a>r$; and $P=(r\cos\theta,r\sin\theta)$, where $0\leq
\theta\leq\pi/2$. Let $\phi$ be the acute angle $O\widehat{A}P$, and apply
elementary calculus.%
\hfill

\end{proof}

\begin{lemma}
\label{oct26l2}Let $\gamma:\left[  a,b\right]  \rightarrow\mathbb{C}$ be a
piecewise smooth Jordan curve, $\gamma(t_{0})$ a smooth point of $\gamma$, $N$
the normal to $\gamma$ at $\gamma(t_{0})$, and $z_{1}\in N-\{\gamma(t_{0})\}$.
Then for each $\varepsilon\in\left(  0,1\right)  $ there exists $\delta>0$
such that if $t\in\left(  0,1\right)  $ and $0<\left\vert t-t_{0}\right\vert
<\delta$, then $\gamma(t_{0}),z_{1}$, and $\gamma(t)$ are the vertices of a
proper triangle; the segments $\left[  z_{1},\gamma(t_{0})\right]  $ and
$\left[  z_{1},\gamma(t)\right]  $ intersect only at $z_{1}$; and the acute
angle between those segments is $<\sin^{-1}\varepsilon$.
\end{lemma}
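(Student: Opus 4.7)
The plan is to extract the angular estimate at the vertex $z_1$ directly from Lemma \ref{jul28l1}, applied with $O = \gamma(t_0)$, $A = z_1$, and $P = \gamma(t)$. Writing $d \equiv |z_1 - \gamma(t_0)| > 0$, the strategy is to pick an auxiliary $\varepsilon' \in (0,\varepsilon)$, set $r \equiv \varepsilon' d$, and use the (uniform) continuity of $\gamma$ to choose a first $\delta_1 > 0$ so that $|\gamma(t) - \gamma(t_0)| < r$ whenever $|t - t_0| < \delta_1$. Since $0 < r < d$, Lemma \ref{jul28l1} then yields an acute angle at $z_1$ of at most $\sin^{-1}(r/d) = \sin^{-1}\varepsilon' < \sin^{-1}\varepsilon$, as required for (iii).

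To secure the proper-triangle condition and the claim that the two segments meet only at $z_1$, I would combine this $\delta_1$ with the $\delta_2$ furnished by Lemma \ref{oct26l1}(i), which guarantees that $\gamma(t) \in -N$ (and is smooth) whenever $0 < |t - t_0| < \delta_2$. Taking $\delta \equiv \min\{\delta_1,\delta_2\}$, I would then verify directly that since $z_1, \gamma(t_0) \in N$ while $\rho(\gamma(t),N) > 0$, the three points are pairwise distinct and not collinear (any common line would be $N$). For the segment condition, $[z_1,\gamma(t_0)] \subset N$; in a coordinate system with $N$ as the $x$-axis the points of $[z_1,\gamma(t)]$ are $(1-s)z_1 + s\gamma(t)$ with imaginary part $s \cdot \operatorname{Im}\gamma(t)$, which vanishes only at $s=0$, giving the unique intersection at $z_1$.

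The proof is therefore essentially a matter of combining two previous results, and I anticipate no real obstacle. The only subtlety is arranging a strict inequality in (iii): Lemma \ref{jul28l1} delivers only $\leq \sin^{-1}(r/OA)$, so I introduce the auxiliary $\varepsilon'$ strictly less than $\varepsilon$ to absorb this, and I check that the premises $OA > r$ and $OP \leq r$ hold — the first because $\varepsilon' < 1$, the second because of the continuity choice of $\delta_1$. Everything else is routine bookkeeping involving Lemma \ref{oct26l1}(i) and the geometric fact that a segment with one endpoint on $N$ and the other off $N$ meets $N$ only at that endpoint.
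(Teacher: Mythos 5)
Your proposal follows the paper's proof almost exactly: both apply Lemma \ref{jul28l1} with $O=\gamma(t_0)$, $A=z_1$, $P=\gamma(t)$, use Lemma \ref{oct26l1} to ensure $\gamma(t)\in -N$, and derive the proper-triangle and segment conditions from $\gamma(t)$ being bounded away from $N\supset[z_1,\gamma(t_0)]$. Your explicit introduction of $\varepsilon'<\varepsilon$ to convert the $\leq$ of Lemma \ref{jul28l1} into the required strict inequality is a small but welcome clarification of a point the paper treats implicitly.
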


\begin{proof}
Let $\varepsilon\in\left(  0,1\right)  $ and $B=\overline{B}(\gamma
(t_{0}),\varepsilon\left\vert \gamma(t_{0})-z_{1}\right\vert )$. By Lemma
\ref{oct26l1} and the continuity of $\gamma$, there exists $\delta>0$ such
that if $t\in\left(  a,b\right)  $ and $0<\left\vert t-t_{0}\right\vert
<\delta$, then $\gamma(t)\in-N$ and $\left\vert \gamma(t)-\gamma
(t_{0})\right\vert <\varepsilon\left\vert \gamma(t_{0})-z_{1}\right\vert $. In
that case, since $\gamma(t_{0})\neq z_{1}$ and $\rho(\gamma(t),[\gamma
(t_{0}),z_{1}])\geq\rho(\gamma(t),N)>0$, the points $\gamma(t_{0}),z_{1}$, and
$\gamma(t)$ are the vertices of a proper triangle and the segments $\left[
z_{1},\gamma(t_{0})\right]  $ and $\left[  z_{1},\gamma(t)\right]  $ intersect
only at $z_{1}$. Also, $\left\vert \gamma(t_{0})-z_{1}\right\vert
>\varepsilon\left\vert \gamma(t_{0})-z_{1}\right\vert $ and $\gamma(t)\in B$,
so, by Lemma \ref{jul28l1}, the acute angle between $\left[  z_{1}%
,\gamma(t_{0})\right]  $ and $\left[  z_{1},\gamma(t)\right]  $ is less than
$\sin^{-1}\varepsilon$.%
\hfill

\end{proof}

\begin{lemma}
\label{jan09l1}Let $\gamma_{1},\gamma_{2}:\left[  0,1\right]  \rightarrow
\mathbb{C}$ be piecewise differentiable paths in a compact set $K\subset
\mathbb{C}$, and let $f\ $be differentiable on $K$. Then%
\[
\left\vert \int_{\gamma_{1}}f(z)\mathsf{d}z-\int_{\gamma_{z}}f(z)\mathsf{d}%
z\right\vert \leq\left\Vert f\right\Vert _{K}\left\Vert \gamma_{1}^{\prime
}-\gamma_{2}^{\prime}\right\Vert _{\left[  0,1\right]  }+\left\Vert \gamma
_{2}^{\prime}\right\Vert _{\left[  0,1\right]  }\left\Vert f\circ\gamma
_{1}-f\circ\gamma_{2}\right\Vert _{\left[  0,1\right]  }.
\]

\end{lemma}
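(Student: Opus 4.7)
The plan is to reduce the path integrals to Riemann integrals over $[0,1]$ and apply a standard additive-multiplicative splitting, being careful only about the piecewise nature of differentiability.

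First, I would pass to a common partition $0 = s_0 < s_1 < \cdots < s_n = 1$ that refines the partitions associated with the piecewise differentiability of both $\gamma_1$ and $\gamma_2$. On each subinterval $[s_k, s_{k+1}]$ both curves are differentiable, so by definition
\[
\int_{\gamma_j} f(z)\,\mathsf{d}z = \sum_{k=0}^{n-1} \int_{s_k}^{s_{k+1}} f(\gamma_j(t))\,\gamma_j'(t)\,\mathsf{d}t \qquad (j=1,2).
\]
This reduces the problem to controlling the single Riemann integral $\int_0^1 [f(\gamma_1(t))\gamma_1'(t) - f(\gamma_2(t))\gamma_2'(t)]\,\mathsf{d}t$, where the integrand is piecewise continuous.

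Second, I would use the algebraic identity
\[
f(\gamma_1)\gamma_1' - f(\gamma_2)\gamma_2' = f(\gamma_1)\bigl(\gamma_1' - \gamma_2'\bigr) + \bigl(f(\gamma_1) - f(\gamma_2)\bigr)\gamma_2'
\]
and the triangle inequality for integrals to obtain
\[
\left| \int_0^1 \!\bigl[f(\gamma_1)\gamma_1' - f(\gamma_2)\gamma_2'\bigr]\,\mathsf{d}t \right| \leq \int_0^1 |f(\gamma_1(t))|\,|\gamma_1'(t)-\gamma_2'(t)|\,\mathsf{d}t + \int_0^1 |f(\gamma_1(t))-f(\gamma_2(t))|\,|\gamma_2'(t)|\,\mathsf{d}t.
\]
Since $\gamma_1(t), \gamma_2(t) \in K$ for all $t$, we have $|f(\gamma_1(t))| \leq \|f\|_K$. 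Bounding each integrand by the product of the relevant sup norm and the remaining factor, and using $\int_0^1 |\gamma_1'(t) - \gamma_2'(t)|\,\mathsf{d}t \leq \|\gamma_1' - \gamma_2'\|_{[0,1]}$ and similarly for the second term, yields the stated inequality.

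There is no real obstacle here; the only mildly delicate point is the constructive justification that $\|f\|_K$ and $\|\gamma_j'\|_{[0,1]}$ exist as suprema, which follows from total boundedness of $f(K)$ and of $\gamma_j'([s_k, s_{k+1}])$ together with the piecewise continuity of $\gamma_j'$. Everything else is bookkeeping: combine the bounds across the pieces of the partition and the additivity of the integrals to yield the global estimate.
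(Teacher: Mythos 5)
Your proposal is correct and follows essentially the same route as the paper: the key step in both is the algebraic splitting $f(\gamma_1)\gamma_1' - f(\gamma_2)\gamma_2' = f(\gamma_1)(\gamma_1' - \gamma_2') + (f(\gamma_1) - f(\gamma_2))\gamma_2'$, followed by the triangle inequality and sup-norm bounds. The extra care you take over the common refinement of the two partitions is sound bookkeeping that the paper leaves implicit.
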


\begin{proof}
We have%
\begin{align*}
&  \left\vert \int_{\gamma_{1}}f(z)\mathsf{d}z-\int_{\gamma_{z}}%
f(z)\mathsf{d}z\right\vert \\
&  =\left\vert \int_{0}^{1}f(\gamma_{1}(t))(\gamma_{1}^{\prime}(t)-\gamma
_{2}^{\prime}(t))\mathsf{d}t+\int_{0}^{1}\gamma_{2}^{\prime}(t)(f(\gamma
_{1}(t))-f(\gamma_{2}(t)))\mathsf{d}t\right\vert \\
&  \leq\int_{0}^{1}\left\vert f(\gamma_{1}(t))\right\vert \left\vert
\gamma_{1}^{\prime}(t)-\gamma_{2}^{\prime}(t)\right\vert \mathsf{d}t+\int%
_{0}^{1}\left\vert \gamma_{2}^{\prime}(t)\right\vert \left\vert f(\gamma
_{1}(t))-f(\gamma_{2}(t))\right\vert \mathsf{d}t\\
&  \leq\int_{0}^{1}\left\Vert f\right\Vert _{K}\left\Vert \gamma_{1}^{\prime
}-\gamma_{2}^{\prime}\right\Vert _{\left[  0,1\right]  }\mathsf{d}t+\int%
_{0}^{1}\left\Vert \gamma_{2}^{\prime}\right\Vert _{\left[  0,1\right]
}\left\Vert f\circ\gamma_{1}-f\circ\gamma_{2}\right\Vert _{\left[  0,1\right]
}\mathsf{d}t\\
&  \leq\left\Vert f\right\Vert _{K}\left\Vert \gamma_{1}^{\prime}-\gamma
_{2}^{\prime}\right\Vert _{\left[  0,1\right]  }+\left\Vert \gamma_{2}%
^{\prime}\right\Vert _{\left[  0,1\right]  }\left\Vert f\circ\gamma_{1}%
-f\circ\gamma_{2}\right\Vert _{\left[  0,1\right]  }.
\end{align*}%
\hfill

\end{proof}

\begin{lemma}
\label{jan09l2}Let $K\subset\mathbb{C}$ be compact, let $\gamma_{1},\gamma
_{2}$ be linear paths in $K$, and let $z_{1}\in-K$. Then%
\[
\left\vert \int_{\gamma_{1}}\frac{\mathsf{d}z}{z-z_{1}}-\int_{\gamma_{2}}%
\frac{\mathsf{d}z}{z-z_{1}}\right\vert \leq\frac{\left\Vert \gamma_{1}%
-\gamma_{2}\right\Vert _{\left[  0,1\right]  }}{\rho(z_{1},K)}\left(
2+\frac{\left\vert \gamma_{2}(1)-\gamma_{2}(0)\right\vert }{\rho(z_{1}%
,K)}\right)  .
\]

\end{lemma}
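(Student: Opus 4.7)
The plan is to apply Lemma \ref{jan09l1} directly with $f(z) = 1/(z-z_1)$, and to estimate each of the three quantities appearing on the right-hand side of that lemma.

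First I would observe that since $z_1 \in -K$, the function $f(z) \equiv (z-z_1)^{-1}$ is differentiable on the compact set $K$, with $\|f\|_K \leq 1/\rho(z_1,K)$. Moreover, for $z,w \in K$ we have
\[
|f(z)-f(w)| = \frac{|w-z|}{|z-z_1||w-z_1|} \leq \frac{|z-w|}{\rho(z_1,K)^2},
\]
so $\|f\circ\gamma_1 - f\circ\gamma_2\|_{[0,1]} \leq \|\gamma_1-\gamma_2\|_{[0,1]}/\rho(z_1,K)^2$.

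Next I would use the linearity of the paths: writing $\gamma_j(t) = (1-t)\gamma_j(0) + t\gamma_j(1)$, the derivative $\gamma_j'(t) = \gamma_j(1) - \gamma_j(0)$ is constant in $t$. Hence $\|\gamma_2'\|_{[0,1]} = |\gamma_2(1)-\gamma_2(0)|$, and
\[
\|\gamma_1' - \gamma_2'\|_{[0,1]} = |(\gamma_1(1)-\gamma_2(1)) - (\gamma_1(0)-\gamma_2(0))| \leq 2\|\gamma_1-\gamma_2\|_{[0,1]}.
\]

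Plugging these three estimates into the bound from Lemma \ref{jan09l1} gives
\[
\left|\int_{\gamma_1}\frac{\mathsf{d}z}{z-z_1} - \int_{\gamma_2}\frac{\mathsf{d}z}{z-z_1}\right| \leq \frac{1}{\rho(z_1,K)}\cdot 2\|\gamma_1-\gamma_2\|_{[0,1]} + |\gamma_2(1)-\gamma_2(0)|\cdot\frac{\|\gamma_1-\gamma_2\|_{[0,1]}}{\rho(z_1,K)^2},
\]
which factors as the claimed bound. There is no real obstacle here; the only thing to be careful about is the factor of $2$ arising from $\|\gamma_1'-\gamma_2'\|$, which is what forces the constant $2$ (rather than $1$) in the final estimate.
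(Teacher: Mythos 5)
Your proposal is correct and follows essentially the same route as the paper's proof: set $f(z)=1/(z-z_1)$, estimate $\|f\|_K$, $\|f\circ\gamma_1-f\circ\gamma_2\|_{[0,1]}$, and $\|\gamma_1'-\gamma_2'\|_{[0,1]}$ using linearity, then apply Lemma \ref{jan09l1}. The three bounds and the factorisation at the end match the paper exactly.
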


\begin{proof}
Let $f(z)=1/(z-z_{1})\ \ (z\in\mathbb{C}-\{z_{1}\})$. Then $f$ is
differentiable on $K$ and $\left\Vert f\right\Vert _{K}\leq1/\rho(z_{1},K)$.
For each $t\in\left[  0,1\right]  $,%
\begin{align*}
\left\vert f(\gamma_{1}(t))-f(\gamma_{2}(t))\right\vert  &  =\left\vert
\frac{1}{\gamma_{2}(t)-z_{1}}-\frac{1}{\gamma_{1}(t)-z_{1}}\right\vert \\
&  \leq\frac{\left\vert \gamma_{1}(t)-\gamma_{2}(t)\right\vert }{\left\vert
\gamma_{2}(t)-z_{1}\right\vert \left\vert \gamma_{1}(t)-z_{1}\right\vert }\\
&  \leq\frac{\left\Vert \gamma_{1}-\gamma_{2}\right\Vert _{\left[  0,1\right]
}}{\rho(z_{1},K)^{2}}.
\end{align*}
Also, since $\gamma_{k}$ is linear, $\gamma_{k}^{\prime}(t)=\gamma
_{k}(1)-\gamma_{k}(0)$ and therefore%
\begin{align*}
\left\Vert \gamma_{1}^{\prime}(t)-\gamma_{2}^{\prime}(t)\right\Vert _{ \left[
0,1\right]  } &  =\left\vert \gamma_{1}(1)-\gamma_{1}(0)-\gamma_{2}%
(1)+\gamma_{2}(0)\right\vert \\
&  \leq\left\vert \gamma_{1}(1)-\gamma_{2}(1)\right\vert +\left\vert
\gamma_{1}(0)-\gamma_{2}(0)\right\vert \\
&  \leq2\left\Vert \gamma_{1}-\gamma_{2}\right\Vert _{\left[  0,1\right]  }.
\end{align*}
Hence, by Lemma \ref{jan09l1},%
\begin{align*}
\left\vert \int_{\gamma_{1}}\frac{\mathsf{d}z}{z-z_{1}}-\int_{\gamma_{2}}%
\frac{\mathsf{d}z}{z-z_{1}}\right\vert  & \leq\frac{2\left\Vert \gamma
_{1}-\gamma_{2}\right\Vert _{\left[  0,1\right]  }}{\rho(z_{1},K)}%
+\frac{\left\Vert \gamma_{2}^{\prime}\right\Vert _{\left[  0,1\right]
}\left\Vert \gamma_{1}-\gamma_{2}\right\Vert _{\left[  0,1\right]  }}%
{\rho(z_{1},K)^{2}}\\
& =\frac{\left\Vert \gamma_{1}-\gamma_{2}\right\Vert _{\left[  0,1\right]  }%
}{\rho(z_{1},K)}\left(  2+\frac{\left\vert \gamma_{2}(1)-\gamma_{2}%
(0)\right\vert }{\rho(z_{1},K)}\right)  .
\end{align*}
%

\hfill

\end{proof}

Before our main result in this section, we point out that the statement of
Corollary (5.4.5) on page 144 of \cite{BB} is incorrect as it
stands,\footnote{%
\normalfont\sf
The word `closed' should be deleted from the second line of the Corollary and
the first line of its proof.} and should, in our context, be replaced by the following.

\begin{proposition}
\label{oct31p1}Let $\gamma$ be a closed path in $\mathbb{C}$, and $z_{1}%
,z_{2}$ points of $\mathbb{C}$ that can be joined by a piecewise
differentiable\footnote{%
\normalfont\sf
For Bishop the words \emph{piecewise differentiable} would be redundant.} path
in $-\mathsf{car}(\gamma)$. Then $\mathsf{j}(\gamma,z_{1})=\mathsf{j}%
(\gamma,z_{2})$.
\end{proposition}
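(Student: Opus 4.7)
Let $\mu\colon [0,1]\to\mathbb{C}$ be the piecewise differentiable path joining $z_1$ to $z_2$. Since $\mu$ is a path in the open set $-\mathsf{car}(\gamma)$, we have $\mathsf{car}(\mu)\subset\subset -\mathsf{car}(\gamma)$; this furnishes $r>0$ such that $|\zeta-\mu(t)|\geq r$ for all $\zeta\in\mathsf{car}(\gamma)$ and $t\in[0,1]$. The function $g\colon [0,1]\to\mathbb{Z}$ defined by $g(t)\equiv\mathsf{j}(\gamma,\mu(t))$ is therefore well-defined. The strategy is to show that $g$ is uniformly continuous, re-encode it as a $\{0,1\}$-valued pointwise continuous function, and then invoke Lemma \ref{aug16l1}.

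For the continuity estimate, compute
\[
g(t)-g(t')\;=\;\frac{1}{2\pi i}\int_{\gamma}\frac{(\mu(t)-\mu(t'))\,\mathsf{d}\zeta}{(\zeta-\mu(t))(\zeta-\mu(t'))},
\]
whose integrand is bounded by $|\mu(t)-\mu(t')|/r^2$ on $\mathsf{car}(\gamma)$. The standard length bound yields
\[
|g(t)-g(t')|\;\leq\;\frac{L\,|\mu(t)-\mu(t')|}{2\pi r^{2}},
\]
where $L$ is the (finite) length of the piecewise differentiable path $\gamma$; combined with the uniform continuity of $\mu$, this shows that $g$ is uniformly continuous on $[0,1]$. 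Now define $h\colon [0,1]\to\{0,1\}$ by $h(t)=0$ if $g(t)=g(0)$ and $h(t)=1$ otherwise, a definition that is admissible because equality of integers is decidable. Taking $\varepsilon\in(0,1)$ in the continuity of $g$, any two values of $g$ within $\varepsilon$ of each other must coincide (being integers at distance less than $1$), so $h$ is pointwise continuous. Lemma \ref{aug16l1} then yields $h\equiv h(0)=0$, whence $g(0)=g(1)$ and therefore $\mathsf{j}(\gamma,z_{1})=\mathsf{j}(\gamma,z_{2})$.

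The main obstacle is the final step. Classically, one would simply invoke the connectedness of $[0,1]$ under a continuous integer-valued map; constructively, one instead has to produce a decidable boolean encoding of $g$ and then rely on the interval-halving argument packaged in Lemma \ref{aug16l1}. Everything else (the length bound, the derivative-under-the-integral style estimate, and the well-containment giving the uniform lower bound $r$ on $|\zeta-\mu(t)|$) is routine.
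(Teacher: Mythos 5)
The paper actually gives no proof of this Proposition: it is stated as a corrected replacement for Corollary (5.4.5) of \cite{BB}, with the argument implicitly deferred to that source. Your proof is therefore a self-contained argument for a result the paper only cites, and it is correct. Two small remarks. First, the Proposition's hypothesis literally reads ``closed path'' for $\gamma$, but you (necessarily and correctly) use the length $L$ of $\gamma$, which presupposes piecewise differentiability; this is implicit in the statement, since $\mathsf{j}(\gamma,\cdot)$ is defined only for piecewise differentiable closed paths. Second, the detour through a $\{0,1\}$-valued function and Lemma \ref{aug16l1} is heavier machinery than the situation requires: because your $g$ is \emph{uniformly} continuous and integer-valued, once you have $\delta_{0}>0$ with $|g(t)-g(t')|<1$ whenever $|t-t'|<\delta_{0}$, you can take any partition $0=t_{0}<t_{1}<\cdots<t_{n}=1$ with $t_{k+1}-t_{k}<\delta_{0}$ and conclude $g(t_{0})=g(t_{1})=\cdots=g(t_{n})$ by a finite chain. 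Lemma \ref{aug16l1} exists precisely to cope with the harder situation where one has only \emph{pointwise} continuity (as in Proposition \ref{aug16p2}); invoking it here is valid but stylistically overpowered.
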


For completeness we include the following two results.

\begin{lemma}
\label{oct31l2}Let $\gamma:\left[  a,b\right]  \rightarrow\mathbb{C}$ be a
piecewise differentiable closed curve. Then there exists $R>0$ such that
$\mathsf{j}(\gamma,\zeta)=0$ whenever $\left\vert \zeta\right\vert >R$.
\end{lemma}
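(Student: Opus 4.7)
The plan is to bound the winding-number integral by a quantity that tends to zero as $|\zeta| \to \infty$ and then exploit integrality of $\mathsf{j}(\gamma,\zeta)$.

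First I would note that since $\gamma:[a,b]\to\mathbb{C}$ is (uniformly) continuous on a compact interval, $\mathsf{car}(\gamma)$ is a compact subset of $\mathbb{C}$, so the quantity $M \equiv \sup\{|z|:z\in\mathsf{car}(\gamma)\}$ exists. Since $\gamma$ is piecewise differentiable, the derivative $\gamma'$ is defined and (one-sidedly) continuous on each of finitely many subintervals, and consequently the arc length $L \equiv \int_a^b |\gamma'(t)|\,\mathsf{d}t$ is a finite nonnegative real number. For any $\zeta$ with $|\zeta|>M$ and any $z\in\mathsf{car}(\gamma)$, we have $|z-\zeta|\geq |\zeta|-|z|\geq |\zeta|-M>0$, so that $\zeta\in -\mathsf{car}(\gamma)$ and the standard estimate yields
\[
|\mathsf{j}(\gamma,\zeta)| = \left|\frac{1}{2\pi i}\int_a^b \frac{\gamma'(t)}{\gamma(t)-\zeta}\,\mathsf{d}t\right| \leq \frac{1}{2\pi}\cdot\frac{L}{|\zeta|-M}.
\]

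Next I would choose $R \equiv M + \frac{L}{\pi} + 1$ (or anything that makes the right-hand side above strictly less than $1$). Then for any $\zeta$ with $|\zeta|>R$, the estimate above gives $|\mathsf{j}(\gamma,\zeta)|<1$. But $\mathsf{j}(\gamma,\zeta)$ is an integer (this is the content of \cite[(5.4.3)]{BB}, already cited at the start of this section), and the only integer strictly less than $1$ in absolute value is $0$; hence $\mathsf{j}(\gamma,\zeta)=0$.

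I do not expect a real obstacle here: the argument is entirely routine once one has the modulus estimate. The only point to be slightly careful about constructively is verifying that the length $L$ exists as a real number when $\gamma$ is merely piecewise differentiable (with possibly unequal one-sided derivatives at the partition points), but this follows by adding the finitely many integrals $\int_{t_k}^{t_{k+1}}|\gamma'(t)|\,\mathsf{d}t$, each of which exists because $|\gamma'|$ is uniformly continuous on $[t_k,t_{k+1}]$.
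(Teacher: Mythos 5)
Your proposal is correct and follows essentially the same route as the paper: bound the modulus of the winding-number integral by a quantity that can be made less than $1$ by taking $|\zeta|$ large, then invoke integrality of $\mathsf{j}(\gamma,\zeta)$. The only cosmetic difference is that the paper bounds $|\gamma'|$ by a constant $M$ and uses $(b-a)M/\rho(\zeta,\mathsf{car}(\gamma))$, whereas you use the arc length $L$ and the lower bound $|\zeta|-\sup_{z\in\mathsf{car}(\gamma)}|z|$ for the denominator; the two estimates are interchangeable.
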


\begin{proof}
Since $\gamma$ is piecewise differentiable, there exists $M>0$ such that
$\left\vert \gamma\,^{\prime}(t)\right\vert <M$ for each point $t\in\left[
a,b\right]  $ at which $\gamma$ is differentiable. Choose $R>0$ such that if
$\left\vert \zeta\right\vert >R$, then $\rho(\zeta,\mathsf{ca}$\textsf{$r$%
}$(\gamma))>\frac{(b-a)\left(  1+M\right)  }{2\pi}$. For such $\zeta$ we have%

\begin{align*}
\left\vert j(\gamma,\zeta)\right\vert  & \leq\frac{1}{2\pi}\int_{a}^{b}%
\frac{\left\vert \gamma^{\prime}(t)\right\vert }{\left\vert \gamma
(t)-\zeta\right\vert }\mathsf{d}t\\
& \leq\frac{1}{2\pi}\int_{a}^{b}\frac{M}{\rho(\zeta,\mathsf{car}(\gamma
))}\mathsf{d}t\\
& \leq\frac{1}{2\pi}\int_{a}^{b}\frac{2\pi M}{(b-a)(1+M)}\mathsf{d}t\\
& =\frac{M}{M+1}<1\text{,}%
\end{align*}
so, as the winding number is an integer, $\mathsf{j}(\gamma,z_{1})=0$.%
\hfill

\end{proof}

\begin{lemma}
\label{jan06l1}Let $\gamma:\left[  a,b\right]  \rightarrow\mathbb{C}$ be a
piecewise differentiable closed curve that lies in an open ball $B\subset
\mathbb{C}$. Then $\mathsf{j}(\Gamma,\zeta)=0$ for all $\zeta\in-B$.
\end{lemma}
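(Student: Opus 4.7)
The plan is to apply Cauchy's integral theorem (in the form \cite[Theorem 1]{Bridges24} that the paper has adopted) to the function $f(z) \equiv 1/(z - \zeta)$ on the simply connected open ball $B$. I would proceed as follows.

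First I would observe that since $\zeta \in -B$, we have $\rho(\zeta, B) > 0$, so in particular $\zeta \notin \overline{B}$ and the function $f(z) = 1/(z - \zeta)$ is defined and differentiable on all of $B$: for any compact $K \subset\subset B$ we have $\rho(\zeta, K) \geq \rho(\zeta, B) > 0$, so $f$ is differentiable on $K$ with derivative $-1/(z-\zeta)^2$. Next, I would verify that the open ball $B$ is simply connected in the sense defined in Section~2: it is path connected (any two points are joined by the linear path between them, whose carrier lies in $B$), and any closed path $\gamma_0$ in $B$ with parameter interval $[a,b]$ is null-homotopic via the straight-line contraction to the centre $c$ of $B$, namely $\sigma(s, t) \equiv (1-s)\gamma_0(t) + s c$. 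Each $\sigma_s$ is a closed path, and since $\mathsf{car}(\gamma_0) \subset\subset B$, a short calculation shows that $\mathsf{car}(\sigma) \subset\subset B$ as well.

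With these ingredients in place, the hypothesis that $\gamma$ lies in $B$ means $\mathsf{car}(\gamma) \subset\subset B$, and $\gamma$ is null-homotopic in $B$ by the previous paragraph. Applying Cauchy's integral theorem (as improved in \cite{Bridges24}) to the differentiable function $f$ on the simply connected open set $B$ and the closed path $\gamma$, we obtain
\[
\int_{\gamma} \frac{\mathsf{d}z}{z - \zeta} = 0,
\]
and hence $\mathsf{j}(\gamma, \zeta) = \frac{1}{2\pi i} \int_\gamma \frac{\mathsf{d}z}{z-\zeta} = 0$.

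There is no real obstacle here; the only point that warrants attention is confirming that \textbf{BISH} open balls are simply connected in the precise sense used in Section~2, which, as indicated, follows from the linear contraction to the centre. Everything else is a direct appeal to the improved Cauchy integral theorem cited at the start of the paper.
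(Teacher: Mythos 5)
Your proof is correct, but it takes a genuinely different route from the paper's. You apply Cauchy's integral theorem for the differentiable function $z\rightsquigarrow 1/(z-\zeta)$ on the (simply connected) open ball $B$, obtaining $\int_\gamma \frac{\mathsf{d}z}{z-\zeta}=0$ directly. The paper instead uses the elementary estimate of Lemma~\ref{oct31l2} to produce a far-away point $\zeta_1$ with $\mathsf{j}(\gamma,\zeta_1)=0$, shows that $\zeta$ and $\zeta_1$ are joined by a line segment lying entirely in $-B$ (hence bounded away from $\mathsf{car}(\gamma)$ since $\mathsf{car}(\gamma)\subset\subset B$), and then invokes the invariance of the winding number along paths bounded away from $\gamma$ (citing (5.4.5) of \cite{BB}). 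Your route is shorter and more conceptual, at the cost of importing the full strength of Cauchy's theorem; the paper's route is more elementary, staying within the winding-number toolkit already deployed in this section, which is more in keeping with the spirit of a small supporting lemma. Your remark that one must verify that open balls are simply connected in the precise constructive sense of Section~2 is apt; the linear-contraction homotopy $\sigma(s,t)=(1-s)\gamma_0(t)+sc$ does the job, and since it is in fact piecewise differentiable, even Bishop's original form of the integral theorem (rather than the improvement in \cite{Bridges24}) would suffice here.
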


\begin{proof}
Translating if necessary, we may assume that $B=B(0,r)$ where $r>0$. Given
$\zeta\in-B$, choose $R>0$ as in Lemma \ref{oct31l2}. Either $\left\vert
\zeta\right\vert >R$, in which case $\mathsf{j}(\gamma,\zeta)=0$, or else
$\left\vert \zeta\right\vert <2R$. In the latter case, let $\zeta_{1}%
=\frac{4R}{\left\vert \zeta\right\vert }\zeta$; then $\left\vert \zeta
_{1}\right\vert >R$, so $\mathsf{j}(\gamma,\zeta_{1})=0$. Also, for each
$\lambda\in\left[  0,1\right]  $,%
\[
\left\vert \left(  1-\lambda\right)  \zeta+\lambda\zeta_{1}\right\vert
=\left\vert 1+\left(  \frac{4R}{\left\vert \zeta\right\vert }-1\right)
\lambda\right\vert \left\vert \zeta\right\vert \geq(1+\lambda)\left\vert
\zeta\right\vert >r\text{,}%
\]
so $\left[  \zeta,\zeta_{1}\right]  \subset-B$. Since $\mathsf{car}%
(\gamma)\subset\subset B$, it follows that the path $\mathsf{lin}(\zeta
,\zeta_{1})$ is bounded away from $\mathsf{car}(\gamma)$; whence, by
\cite[(5.4.5)]{BB}, $\mathsf{j}(\gamma,\zeta)=\mathsf{j}(\gamma,\zeta_{1})=0$.%
\hfill

\end{proof}

\begin{lemma}
\label{feb06l1}Let $\gamma:\left[  a,b\right]  \rightarrow\mathbb{C}$ is a
piecewise smooth Jordan curve, then $\mathsf{j}(\gamma,\zeta)=0$ for each
$\zeta\in\mathsf{out}(\gamma)$.
\end{lemma}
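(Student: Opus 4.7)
The plan is to exploit the fact that the winding number $\mathsf{j}(\gamma,\cdot)$ is constant on polygonal-arc components of $-\mathsf{car}(\gamma)$, and then locate a single ``reference point'' in $\mathsf{out}(\gamma)$ on which the winding number is obviously $0$.

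First I would invoke the passage earlier in the paper stating that the proof of JCT in \cite{JCTthm} shows every point sufficiently far from $\mathsf{car}(\gamma)$ has index $0$. Concretely, choose $R_{1}>0$ so large that the set $\{z:|z|>R_{1}\}$ lies in $\mathsf{out}(\gamma)$. Next, apply Lemma \ref{oct31l2} to obtain $R_{2}>0$ such that $\mathsf{j}(\gamma,z)=0$ whenever $|z|>R_{2}$. Fix any point $\zeta_{0}$ with $|\zeta_{0}|>\max\{R_{1},R_{2}\}$; then $\zeta_{0}\in\mathsf{out}(\gamma)$ and $\mathsf{j}(\gamma,\zeta_{0})=0$.

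Now let $\zeta\in\mathsf{out}(\gamma)$ be arbitrary. Since $\mathsf{ind}(\zeta;\gamma)=\mathsf{ind}(\zeta_{0};\gamma)=0$, \textsf{JCT} (together with the Corollary to Proposition~4 of \cite{JCTthm}, already cited in the paper) guarantees a polygonal path $\mu$ joining $\zeta$ to $\zeta_{0}$ whose carrier is bounded away from $\gamma$; in particular $\mu$ is a piecewise differentiable path in $-\mathsf{car}(\gamma)$. Proposition \ref{oct31p1} then yields
\[
\mathsf{j}(\gamma,\zeta)=\mathsf{j}(\gamma,\zeta_{0})=0,
\]
as required.

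There is no serious obstacle here: the whole argument is an assembly of results already in the paper. The only delicate point is making sure the reference point $\zeta_{0}$ simultaneously has index $0$ (so it lies in $\mathsf{out}(\gamma)$ and can legitimately be connected to $\zeta$ by a polygonal path off $\gamma$) and winding number $0$, which is why I pick $|\zeta_{0}|$ larger than both $R_{1}$ and $R_{2}$.
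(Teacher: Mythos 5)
Your proposal is correct and follows essentially the same route as the paper's proof: pick a reference point far from $\mathsf{car}(\gamma)$ whose winding number vanishes by Lemma \ref{oct31l2} and which lies in $\mathsf{out}(\gamma)$, connect any $\zeta\in\mathsf{out}(\gamma)$ to it by a polygonal path off $\gamma$ via \textsf{JCT}, and invoke Proposition \ref{oct31p1}. The only cosmetic difference is that the paper obtains the membership $\zeta_{1}\in\mathsf{out}(\gamma)$ directly from $\overline{\mathsf{in}(\gamma)}\subset\subset B(0,R)$ together with Corollary \ref{aug02c1}, whereas you use a second radius $R_{1}$ and the cited remark about far-away points having index $0$; both are equally valid.
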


\begin{proof}
Using Lemma \ref{oct31l2}, choose $R>0$ such that $\overline{\mathsf{in}%
(\gamma)}\subset\subset B(0,R)$ and $\mathsf{j}(\gamma,\zeta)=0$ whenever
$\left\vert \zeta\right\vert >R$. Fix $\zeta_{1}$ with $\left\vert \zeta
_{1}\right\vert >R$; then $\zeta_{1}\in-\overline{\mathsf{in}(\gamma
)}=\mathsf{out}(\gamma)$. If $\zeta\in\mathsf{out}(\gamma)$, then by JCT,
$\zeta$ and $\zeta_{1}$ can be joined by a polygonal path well contained in
$-\mathsf{ca}$\textsf{$r$}$\mathsf{(}\gamma)$, so, by Proposition
\ref{oct31p1} and our choice of $R$, $\mathsf{j}(\gamma,\zeta)=\mathsf{j}%
(\gamma,\zeta_{1})=0=\mathsf{ind}(\zeta,\gamma)$.%
\hfill

\end{proof}%

\medskip

We have now reached our final result.\label{0001202}

\begin{proposition}
\label{oct25p1}If $\zeta\in\mathbb{C}$ and $\gamma\ $is a piecewise smooth
Jordan curve in $\mathbb{C}-\{\zeta\}$, then $\mathsf{j}(\gamma,\zeta
)=\mathsf{ind}(\zeta;\gamma)$.
\end{proposition}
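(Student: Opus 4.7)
The proof proceeds by case analysis on whether $\zeta\in\mathsf{out}(\gamma)$ or $\zeta\in\mathsf{in}(\gamma)$. In the former case, Lemma \ref{feb06l1} gives $\mathsf{j}(\gamma,\zeta)=0=\mathsf{ind}(\zeta;\gamma)$ at once, so the substance lies in the case $\zeta\in\mathsf{in}(\gamma)$, where the goal is $\mathsf{j}(\gamma,\zeta)=1$. Because $\mathsf{in}(\gamma)$ is polygonally path-connected by \textsf{JCT}, and $\mathsf{j}(\gamma,\cdot)$ is invariant under piecewise differentiable deformation in $-\mathsf{car}(\gamma)$ by Proposition \ref{oct31p1}, it suffices to verify $\mathsf{j}(\gamma,z_{1})=1$ at a single conveniently chosen $z_{1}\in\mathsf{in}(\gamma)$.

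For that point, I would fix an interior smooth $\zeta_{0}\equiv\gamma(t_{0})$, let $T$ and $N$ be the tangent and normal to $\gamma$ at $\zeta_{0}$, and apply Corollary \ref{jan24c1} with $L=N$ to produce points $z_{1},z_{2}\in N$ on opposite sides of $T$, equidistant from $\zeta_{0}$, with $z_{1}\in\mathsf{in}(\gamma)$ and $z_{2}\in\mathsf{out}(\gamma)$. Since $\mathsf{j}(\gamma,z_{2})=0$ by Lemma \ref{feb06l1}, everything reduces to proving $\mathsf{j}(\gamma,z_{1})-\mathsf{j}(\gamma,z_{2})=1$.

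To handle that difference I would localise the action near $\zeta_{0}$. Using Lemma \ref{oct26l1} and Lemma \ref{feb01l1}, pick $\varepsilon>0$ so small that the complement $\gamma_{\mathsf{far}}$ of $\gamma_{\mathsf{near}}\equiv\gamma|_{[t_{0}-\varepsilon,t_{0}+\varepsilon]}$ is bounded well away from a disc $D$ around $\zeta_{0}$ containing $z_{1}$ and $z_{2}$. Close each of $\gamma_{\mathsf{near}}$ and $\gamma_{\mathsf{far}}$ with the linear chord $c\equiv\mathsf{lin}(\gamma(t_{0}-\varepsilon),\gamma(t_{0}+\varepsilon))$ (traversed in opposite senses), producing closed paths $\sigma_{\varepsilon}$ and $\tau_{\varepsilon}$ with $\mathsf{j}(\gamma,z_{j})=\mathsf{j}(\sigma_{\varepsilon},z_{j})+\mathsf{j}(\tau_{\varepsilon},z_{j})$ by additivity of the defining integral. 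Since $z_{1}$ and $z_{2}$ lie in $D\subset-\mathsf{car}(\tau_{\varepsilon})$ and can be joined there by a polygonal path, Proposition \ref{oct31p1} gives $\mathsf{j}(\tau_{\varepsilon},z_{1})=\mathsf{j}(\tau_{\varepsilon},z_{2})$, reducing matters to $\mathsf{j}(\sigma_{\varepsilon},z_{1})-\mathsf{j}(\sigma_{\varepsilon},z_{2})=1$.

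The main obstacle is this last local computation on the small loop $\sigma_{\varepsilon}$. The plan is to approximate $\sigma_{\varepsilon}$ by a polygonal loop $P_{n}$ coming from a fine partition of $[t_{0}-\varepsilon,t_{0}+\varepsilon]$, use Lemmas \ref{jan09l1} and \ref{jan09l2}---together with the uniform lower bound on $\rho(z_{j},\mathsf{car}(\sigma_{\varepsilon}))$ supplied by Proposition \ref{jun04p1}(ii)---to arrange $|\mathsf{j}(\sigma_{\varepsilon},z_{j})-\mathsf{j}(P_{n},z_{j})|<\tfrac{1}{2}$ for $n$ large, and then exploit integrality to collapse this to equality. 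The polygonal winding-number difference $\mathsf{j}(P_{n},z_{1})-\mathsf{j}(P_{n},z_{2})$ can then be evaluated from the subtended angles at the polygon's vertices, which Lemma \ref{oct26l2} controls, together with the fact that $P_{n}$ is a small loop separating exactly one of $z_{1},z_{2}$ from infinity; the resulting total is $1$ once the orientation convention tying our Jordan curves to the Berg ones through the correspondence $\Phi$ of Theorem \ref{nov18t1} is taken into account.
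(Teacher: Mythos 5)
Your opening reduction---handling $\zeta\in\mathsf{out}(\gamma)$ via Lemma \ref{feb06l1}, then using the polygonal path-connectedness of $\mathsf{in}(\gamma)$ together with Proposition \ref{oct31p1} to reduce to a single well-chosen $z_{1}$, and using Corollary \ref{jan24c1} to get $z_{1},z_{2}$ on the normal $N$ on opposite sides of $T$---is exactly the reduction the paper makes, so the scaffolding is sound.

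The gap is in the decomposition step. You close each of $\gamma_{\mathsf{near}}$ and $\gamma_{\mathsf{far}}$ with the chord $c=\mathsf{lin}(\gamma(t_{0}-\varepsilon),\gamma(t_{0}+\varepsilon))$ and assert that $z_{1},z_{2}$ lie in $D\subset-\mathsf{car}(\tau_{\varepsilon})$ and can be joined there, so that $\mathsf{j}(\tau_{\varepsilon},z_{1})=\mathsf{j}(\tau_{\varepsilon},z_{2})$. But $c$ is part of $\mathsf{car}(\tau_{\varepsilon})$, and $c$ runs essentially along the tangent direction through $D$, crossing $N$ at a point close to $\zeta_{0}$---i.e.\ strictly between $z_{1}$ and $z_{2}$. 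So $D\not\subset-\mathsf{car}(\tau_{\varepsilon})$, and $[z_{1},z_{2}]$ meets $\tau_{\varepsilon}$. Indeed the chord separates $z_{1}$ from $z_{2}$ inside $D$, so nothing forces their $\tau_{\varepsilon}$-winding numbers to agree; if anything the jump across $\gamma$ is carried by $\tau_{\varepsilon}$, since $\sigma_{\varepsilon}$ is a thin lune whose interior is bounded away from $z_{1}$ and $z_{2}$ once $\delta$ is taken small relative to the sagitta. With this decomposition the ``localisation to the near loop'' does not go through. The final step is also problematic on its own terms: deciding that the polygon $P_{n}$ ``separates exactly one of $z_{1},z_{2}$ from infinity'' is precisely the kind of statement whose constructive content is what we are trying to establish, so invoking it without argument risks circularity.

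For contrast, the paper performs a different surgery. It fixes a small circle $C$ (carrier of a clockwise path $\sigma$) \emph{centred at} $z_{1}$, replaces a short arc of $\gamma$ near $\zeta_{0}$ by the two radial segments joining $\gamma(t_{1})$, $\gamma(t_{2})$ to $C$ together with the complementary arc of $\sigma$, and thereby forms a closed curve $\Gamma$ lying in $\overline{\mathsf{in}(\gamma)}\cup C$. The key payoffs are: (a) $\mathsf{car}(\Gamma)$ sits in a ball $B(z_{1},D)$ that misses a far point $\zeta_{1}$, and a concrete polygonal path $\mu+\mathsf{lin}(z_{2},z_{1})$ from $\zeta_{1}$ to $z_{1}$ is shown to be bounded away from $\Gamma$, so $\mathsf{j}(\Gamma,z_{1})=\mathsf{j}(\Gamma,\zeta_{1})=0$ by Lemma \ref{jan06l1} and Proposition \ref{oct31p1}; and (b) $\int_{\sigma}\frac{\mathsf{d}z}{z-z_{1}}$ is known exactly (it is $-2\pi i$, the circle being centred at $z_{1}$). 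Then Lemmas \ref{jan09l1} and \ref{jan09l2} control the difference between $\int_{\Gamma}$ and $\int_{\gamma}+\int_{\sigma}$ by $\kappa\varepsilon$, forcing $\int_{\gamma}+\int_{\sigma}=0$ and hence $\mathsf{j}(\gamma,z_{1})=1$. This avoids any topological counting for the small loop, because the circle's winding number is computed in closed form. If you want to pursue your chord decomposition, you would need (at minimum) a correct description of which component of $-\mathsf{car}(\tau_{\varepsilon})$ each of $z_{1},z_{2}$ lies in, and some replacement for the polygonal crossing-count that does not presuppose a separation theorem.
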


\begin{proof}
We may take the domain of $\gamma$ to be $\left[  0,1\right]  $. The case
$\zeta\in\mathsf{out}(\gamma)$ is covered by Lemma \ref{feb06l1}. To deal with
the case $z\in\mathsf{in}(\gamma)$, fix $t_{0}\in(0,1)$ such that $\zeta
_{0}\equiv\gamma(t_{0})$ is a smooth point of $\gamma$. Rotating if necessary,
we may assume that $\operatorname{Im}\gamma\,^{\prime}(t_{0})=0$, so that the
tangent $T$ to $\gamma$ at $t_{0}$ is horizontal. Let $N$ be the normal to
$\gamma$ at $t_{0}$. By Corollary \ref{jan24c1}, there exists $\delta>0$ such that

\begin{itemize}
\item[(i)] if $z_{1}=\zeta_{0}-i\delta$ and $z_{2}=\zeta_{0}+i\delta$, then,
without loss of generality, $(\zeta_{0},z_{1}]\subset\mathsf{in}(\gamma)$ and
$(\zeta_{0},z_{2}]\subset\mathsf{out}(\gamma)$.
\end{itemize}%

\noindent
By Lemma \ref{oct26l1}, there exists $r$ with $0<r<\delta$ such that

\begin{itemize}
\item[(ii)] if $0<\left\vert t-t_{0}\right\vert <r$, then $\gamma(t)\in-N$;

\item[(iii)] every point of $\gamma(t_{0}-r,t_{0})$ lies on the opposite side
of $N$ from every point of $\gamma\left(  t_{0},t_{0}+r\right)  $.
\end{itemize}%

\noindent
Let%
\[
D=\sup\left\{  \left\vert z-z_{1}\right\vert :z\in\overline{\mathsf{in}%
(\gamma)}\right\}
\]
and $\zeta_{1}\in-B(z_{1},2D)$. Then $\rho(\zeta_{1},\overline{\mathsf{in}%
(\gamma)})>0$, so by Corollary \ref{aug02c1}, $\zeta_{1}\in\mathsf{out}%
(\gamma)$. Now let $d=\rho(z_{1},\mathsf{ca}$\textsf{$r$}$(\gamma))$ and note
that $\delta=\left\vert z_{1}-\gamma(t_{0})\right\vert \geq d$. Let also
\[
\kappa=1+\frac{M}{d}+4\left(  1+\frac{1}{d}\right)  \left(  1+\frac{\delta}%
{d}\right)
\]
and%
\[
C\equiv\left\{  z\in\mathbb{C}:\left\vert z-z_{1}\right\vert =\frac{d}%
{2}\right\}  .
\]
Construct a polygonal path $\mu:\left[  0,1\right]  \rightarrow\mathbb{C}$ in
$\mathsf{out}(\gamma)$ joining $\zeta_{1}$ to $z_{2}$ such that
\begin{align*}
0<\alpha &  \equiv\inf\left\{  \rho(\mu(s),\mathsf{car}(\gamma)):s\in\left[
0,1\right]  \right\}  \\
&  =\inf\left\{  \rho(\mu(s),\overline{\mathsf{in}(\gamma)}):s\in\left[
0,1\right]  \right\}  ,
\end{align*}
the final equality arising from Proposition \ref{mar6p2}. Let $0<\varepsilon
<\min\left\{  \alpha,\pi\right\}  $. Noting that $\rho(\gamma(t_{0}%
),C)<\left\vert \gamma(t_{0})-z_{1}\right\vert <\delta$, and using Lemma
\ref{oct26l2} and the continuity of $t\rightsquigarrow\rho(\gamma(t),\left[
\gamma(t_{0}),z_{1}\right]  )$ on $\left[  0,1\right]  $, construct $r_{1}%
\ $such that $0<r_{1}<\min\left\{  r,\varepsilon/2\right\}  $ and

\begin{itemize}
\item[(iv)] if $0<\left\vert t-t_{0}\right\vert <r_{1}$, then

\begin{itemize}
\item $\left\vert \gamma(t)-\gamma(t_{0})\right\vert <\varepsilon/2$;

\item the points $\gamma(t_{0}),z_{1}$, and $\gamma(t)$ are the vertices of a
proper triangle;

\item the segments $\left[  z_{1},\gamma(t_{0})\right]  $ and $\left[
z_{1},\gamma(t)\right]  $ intersect only at $z_{1}$;

\item the acute angle between those segments is less than $\varepsilon/2$; and

\item $\rho(\gamma(t),C)<\delta$.
\end{itemize}
\end{itemize}%

\noindent
Fix $t_{1},t_{2}$ such that $t_{0}-r_{1}<t_{1}<t_{0}<t_{2}<t_{0}+r_{1}$. By
(iii), $t_{1}$ and $t_{2}$ are on opposite sides of $N$, as are the segments
$[\gamma(t_{1}),z_{1})$ and $[\gamma(t_{2}),z_{1})$. Let $\sigma$ be the
circular path%
\[
\theta\rightsquigarrow z_{1}+\frac{d}{2}\exp\left[  i\left(  \frac{\pi}%
{2}-\theta\right)  \right]  \ \ \ \ \ \left(  0\leq\theta\leq2\pi\right)  .
\]
which is traversed in the \emph{clockwise} direction, is centred at $z_{1}$,
has radius $\frac{d}{2}$, has carrier $C$, and is such that $\sigma
(0)=z_{1}+\frac{id}{2}=\sigma(2\pi)$. Note that by Proposition \ref{oct15p1},
$B(z_{1},d)\subset\mathsf{in}(\gamma)$. For each $k\in\left\{  1,2\right\}  $
there exists $\theta_{k}\in(0,2\pi)$ such that $\sigma(\theta_{k})$ is the
intersection of $C$ and $[\gamma(t_{k}),z_{1}]$. Let $\gamma_{1}%
=\mathsf{lin}(\gamma(t_{1}),\sigma(\theta_{1}))$ and $\gamma_{3}%
=\mathsf{lin}(\gamma(t_{2}),\sigma(\theta_{2}))$; let also $\gamma_{0}$ be the
restriction of $\gamma$ to $\left[  0,t_{1}\right]  $, $\gamma_{4}$ the
restriction of $\gamma$ to $\left[  t_{2},1\right]  $, and $\gamma_{2}$ the
restriction of $\sigma$ to $\left[  \theta_{1},\theta_{2}\right]  $. Then%
\[
\Gamma\equiv\gamma_{0}+\gamma_{1}+\gamma_{2}-\gamma_{3}+\gamma_{4}%
\]
is a piecewise smooth closed path. We divide the remainder of the proof into
five steps.%

\medskip
\noindent
\textbf{Step 1: \ }$\mu$ \emph{is bounded away from} $\Gamma$.%

\smallskip
\noindent
Since $\gamma_{0}$, $\gamma_{2}$, and $\gamma_{4}$ lie in $\overline
{\mathsf{in}(\gamma)}$, $\mu$ is bounded away by $\alpha$ from each of those
paths. On the other hand, if $\lambda\in\left[  0,1\right]  $, $z=\left(
1-\lambda\right)  \gamma(t_{1})+\lambda z_{1}$, and $s\in\left[  0,1\right]
$, then since $\left[  \zeta_{0},z_{1}\right]  \subset\overline{\mathsf{in}%
(\gamma)}$ and $0<\left\vert t_{1}-t_{0}\right\vert <r_{1}$,%
\begin{align*}
\left\vert z-\mu(s)\right\vert  &  \geq\left\vert \mu(s)-\left(
1-\lambda\right)  \zeta_{0}-\lambda z_{1}\right\vert \\
&  \ \ \ \ \ \ \ \ \left.  -\left\vert \left(  1-\lambda\right)  \gamma
(t_{1})+\lambda z_{1}-\left(  1-\lambda\right)  \zeta_{0}-\lambda
z_{1}\right\vert \right. \\
&  \geq\rho(\mu(s),\overline{\mathsf{in}(\gamma)})-\left(  1-\lambda\right)
\left\vert \gamma(t_{1})-\gamma(t_{0})\right\vert \\
&  >\alpha-\frac{\varepsilon}{2}>\frac{\alpha}{2}.
\end{align*}
Hence $\mu$ is bounded away by $\alpha/2$ from $\left[  \gamma(t_{1}%
),z_{1}\right]  $ and therefore from $\gamma_{1}$; likewise, $\mu$ is bounded
away by $\alpha/2$ from $\gamma_{3}$. Putting all this together, we see that
$\mu$ is bounded away by $\alpha/2$ from $\Gamma$.%

\medskip
\noindent
\textbf{Step 2}: \ $\mathsf{lin}(z_{2},z_{1})$ \emph{is bounded away from}
$\Gamma$.%

\smallskip
\noindent
First note that $\gamma(t_{k}),z_{1}$, and $\zeta_{0}$ are vertices of a
proper triangle, by (iv), that $0<\theta_{k}<\frac{\varepsilon}{2}<\frac{\pi
}{2}$, and that $\sigma(\theta_{k})\in\left(  z_{1},\gamma(t_{k}\right)  )$.
By elementary trigonometry, for each $z\in\left[  \sigma(\theta_{k}%
),\gamma(t_{k})\right]  $ we have%
\[
\rho(z,N)=\left\vert z-z_{1}\right\vert \sin\theta_{k}\geq\tfrac{d}{2}%
\sin\theta_{k}.
\]
It now follows that $\mathsf{lin}(z_{2},z_{1})$ is bounded away from
$\gamma_{1}$ and $\gamma_{3}$ by%
\[
\beta\equiv\min\left\{  \tfrac{d}{2}\sin\theta_{1},\tfrac{d}{2}\sin\theta
_{2}\right\}  .
\]
Since $\rho(\sigma(\theta),[z_{1},z_{2}])$ increases from\ $\tfrac{d}{2}%
\sin\theta_{1}$ to $\frac{d}{2}$ on $\left[  \theta_{1},\frac{\pi}{2}\right]
$, takes the value $\frac{d}{2}$ throughout $\left[  \frac{\pi}{2},\frac{3\pi
}{3}\right]  $, and decreases from $\frac{d}{2}$ to $\tfrac{d}{2}\sin
\theta_{2}$ on $\left[  \frac{3\pi}{2},2\pi\right]  $, we see that
$\mathsf{lin}(z_{2},z_{1})$ is bounded away by $\beta$ from $\gamma_{2}$. To
prove that it is bounded away from $\gamma_{0}$, first observe that by
Proposition \ref{nov22p2}, $\gamma_{0}[0,t_{1}]$ is compact; by \textsf{J1},
$\gamma(t_{0})\neq\gamma(t)$ for each $t\in\left[  0,t_{1}\right]  $; and
hence, by Bishop's Lemma, $\gamma(t_{0})\in-\gamma_{0}\left[  0,t_{1}\right]
$. Likewise, $\gamma(t_{0})\in-\gamma_{4}\left[  t_{2},1\right]  $. Let%
\[
s_{1}=\tfrac{1}{3}\min\left\{  \delta,\rho(\gamma(t_{0}),\mathsf{car}%
(\gamma_{0})),\rho(\gamma(t_{0}),\mathsf{car}(\gamma_{4}))\right\}  >0
\]
and%
\[
\chi_{k}=\left(  1-\frac{s_{1}}{\delta}\right)  \zeta_{0}+\frac{s_{1}}{\delta
}z_{k}\ \ \ (k=1,2).
\]
Then $\chi_{k}\in(\zeta_{0},z_{k}]$ and $\left\vert \chi_{k}-\zeta
_{0}\right\vert =s_{1}$. By Corollary \ref{feb11c1}, there exists $s_{2}$ such
that $0<s_{2}<s_{1}$ and $\left[  \chi_{k},z_{k}\right]  _{s_{2}}%
\subset-\mathsf{car}(\gamma)$\ $(k=1,2).$ Suppose that $t\in\left[
0,t_{1}\right]  $, $z\in$ $\left[  z_{1},z_{2}\right]  $, and $\left\vert
\gamma(t)-z\right\vert <s_{2}$. Either $\left\vert z-\zeta_{0}\right\vert
<2s_{1}$ or $\left\vert z-\zeta_{0}\right\vert >s_{1}$. In the first case,%
\[
\left\vert \gamma(t)-\zeta_{0}\right\vert \leq\left\vert \gamma
(t)-z\right\vert +\left\vert z-\zeta_{0}\right\vert <3s_{1}%
\]
and so%
\[
\rho(\gamma(t),\mathsf{car}(\gamma_{0}))\geq\rho(\gamma(t_{0}),\mathsf{car}%
(\gamma_{0}))-\left\vert \gamma(t)-\zeta_{0}\right\vert >3s_{1}-3s_{1}=0,
\]
which is absurd. In the case $\left\vert z-\zeta_{0}\right\vert >s_{1}$ there
exists $k\in\{1,2\}$ such that $~z\in\left[  \chi_{k},z_{k}\right]  $ and
therefore $\gamma(t)\in\left[  \chi_{k},z_{k}\right]  _{s_{2}}\subset
-\mathsf{car}(\gamma)$, which is equally absurd. It follows that $\rho
(\gamma_{0}(t),\left[  z_{1},z_{2}\right]  )\geq s_{2}$ for all $t\in\left[
0,t_{1}\right]  $; whence$\ \mathsf{lin}(z_{2},z_{1})$ is bounded away
from$\ \gamma_{0}\,$---and, likewise, from $\gamma_{4}$. Putting all this
together, we now see that $\mathsf{lin}(z_{2},z_{1})$ is bounded away from
$\Gamma$.%

\medskip
\noindent
\textbf{Step 3: \ }$\frac{1}{2\pi i}\int_{\Gamma}\frac{\mathsf{d}z}{z-z_{1}%
}=0$%

\smallskip
\noindent
If $z\in$ $\mathsf{car}(\gamma_{1})$, then $\left\vert z-z_{1}\right\vert
\leq\left\vert \gamma(t_{1})-z_{1}\right\vert \leq D$; likewise, if
$z\in\mathsf{car}(\gamma_{3})$, then $\left\vert z-z_{1}\right\vert \leq D$;
and if $z\in\mathsf{car}(\gamma_{0}\cup\gamma_{2}\cup\gamma_{4})$, then
$z\in\overline{\mathsf{in}(\gamma)}$ and again\ $\left\vert z-z_{1}\right\vert
\leq D$. Hence $\mathsf{car}(\Gamma)\subset B(z_{1},D)\subset\subset
B(z_{1},2D)$. Since $\zeta_{1}\in-B(z_{1},2D)$, it follows from Lemma
\ref{jan06l1} that $\mathsf{j}(\Gamma,\zeta_{1})=0$. On the other hand, from
Steps 1 and 2 we conclude that the piecewise linear path $\mu+\mathsf{lin}%
(z_{2},z_{1})$ is bounded away from $\Gamma$. Thus by Proposition
\ref{oct31p1},%
\begin{equation}
\frac{1}{2\pi i}\int_{\Gamma}\frac{\mathsf{d}z}{z-z_{1}}=\mathsf{j(\Gamma
},z_{1})=\mathsf{j}(\Gamma,\zeta_{1})=0.\label{aan15}%
\end{equation}
%

\medskip
\noindent
\textbf{Step 4: \ }$\left(  \int_{\gamma}+\int_{\sigma}\right)  \frac
{\mathsf{d}z}{z-z_{1}}=0$%

\smallskip
\noindent
We now estimate the various components of the integral on the left of
(\ref{aan15}). First,%
\begin{align*}
\left\vert \gamma_{1}(0)-\gamma_{3}(0)\right\vert  &  =\left\vert \gamma
(t_{1})-\gamma(t_{2})\right\vert \\
&  \leq\left\vert \gamma(t_{1})-\gamma(t_{0})\right\vert +\left\vert
\gamma(t_{2})-\gamma(t_{0})\right\vert <\frac{\varepsilon}{2}+\frac
{\varepsilon}{2}<(d+1)\varepsilon
\end{align*}
Also, since, by (iv), the smaller arc of $C$ from $\sigma(\theta_{2})$ to
$\sigma(\theta_{1})$, equal to the sum of the smaller arc between
$\sigma(\theta_{2})$ and $\sigma(2\pi)$ and that between $\sigma(0)$ and
$\sigma(\theta_{1})$, has length $<d\varepsilon$,%
\[
\left\vert \gamma_{1}(1)-\gamma_{3}(1)\right\vert =\left\vert \sigma
(\theta_{1})-\sigma(\theta_{2})\right\vert <d\varepsilon<(d+1)\varepsilon.
\]
It follows that for $0\leq t\leq1,$%
\begin{align*}
\left\vert \gamma_{1}(t)-\gamma_{3}(t)\right\vert  & \leq\left(  1-t\right)
\left\vert \gamma_{1}(0)-\gamma_{3}(0)\right\vert +t\left\vert \gamma
_{1}(1)-\gamma_{3}(1)\right\vert \\
& <\left(  1-t\right)  (d+1)\varepsilon+t(d+1)\varepsilon\\
& =(d+1)\varepsilon.
\end{align*}
Hence $\left\Vert \gamma_{1}-\gamma_{3}\right\Vert _{\left[  0,1\right]  }%
\leq(d+1)\varepsilon$. On the other hand, by (iv) and elementary geometry,
$\left\vert \gamma_{3}(1)-\gamma_{3}(0)\right\vert =\rho(\gamma(t_{2}%
),C)<\delta$. Now, $\gamma_{3}$ and $\gamma_{1}$ are paths in the compact set
$K\equiv\overline{\mathsf{car}(\gamma_{3})\cup\mathsf{car}(\gamma_{1})}$,
$\rho(z_{1},K)=d/2$, and $z\rightsquigarrow1/(z-z_{1})$ is differentiable on
$K$. Hence, by Lemma\ \ref{jan09l2},%
\begin{align}
\left\vert \int_{\gamma_{1}}\frac{\mathsf{d}z}{z-z_{1}}-\int_{\gamma_{3}}%
\frac{\mathsf{d}z}{z-z_{1}}\right\vert  &  \leq\frac{\left\Vert \gamma
_{1}-\gamma_{3}\right\Vert _{\left[  0,1\right]  }}{d/2}\left(  2+\frac
{\left\vert \gamma_{3}(1)-\gamma_{3}(0)\right\vert }{d/2}\right)  \nonumber\\
&  \leq\frac{2(d+1)\varepsilon}{d}\left(  2+\frac{2\delta}{d}\right)
\nonumber\\
&  =4\left(  1+\frac{1}{d}\right)  \left(  1+\frac{\delta}{d}\right)
\varepsilon.\label{aan1}%
\end{align}
Next we have%
\begin{align}
\left\vert \left(  \int_{\gamma}-\int_{\gamma_{0}}-\int_{\gamma_{4}}\right)
\frac{\mathsf{d}z}{z-z_{1}}\right\vert  &  =\left\vert \int_{t_{1}}^{t_{2}%
}\frac{\gamma^{\prime}(t)}{\gamma(t)-z_{1}}\mathsf{d}t\right\vert \nonumber\\
&  \leq\int_{t_{1}}^{t_{2}}\frac{\left\vert \gamma^{\prime}(t)\right\vert
}{\left\vert \gamma(t)-z_{1}\right\vert }\mathsf{d}t\nonumber\\
&  \leq\int_{t_{1}}^{t_{2}}\frac{M}{d}\mathsf{d}t=\frac{M}{d}(t_{2}%
-t_{1})<\frac{M}{d}2r_{1}<\frac{M\varepsilon}{d}.\label{aan2}%
\end{align}
On the other hand, from (iv) we have%
\begin{align}
\left\vert \left(  \int_{\sigma}-\int_{\gamma_{2}}\right)  \frac{\mathsf{d}%
z}{z-z_{1}}\right\vert  &  =\left\vert \left(  \int_{0}^{\theta_{1}}%
+\int_{\theta_{2}}^{2\pi}\right)  \frac{\sigma^{\prime}(\theta)}{\sigma
(\theta)-z_{1}}\right\vert \nonumber\\
&  \leq\left(  \int_{0}^{\theta_{1}}+\int_{\theta_{2}}^{2\pi}\right)
\left\vert \frac{\sigma^{\prime}(\theta)}{\sigma(\theta)-z_{1}}\right\vert
\mathsf{d}\theta\nonumber\\
&  \leq\left(  \int_{0}^{\theta_{1}}+\int_{\theta_{2}}^{2\pi}\right)
1\mathsf{d}\theta\nonumber\\
&  =\theta_{1}+\left(  2\pi-\theta_{2}\right)  <\varepsilon.\label{aan3}%
\end{align}
Now%
\begin{align*}
& \left(  \int_{\gamma}+\int_{\sigma}\right)  \frac{\mathsf{d}z}{z-z_{1}}\\
& =\left(  \int_{\Gamma}+\left(  \int_{\gamma}-\int_{\gamma_{0}}-\int%
_{\gamma_{4}}\right)  +\left(  \int_{\sigma}-\int_{\gamma_{2}}\right)
-\left(  \int_{\gamma_{1}}-\int_{\gamma_{3}}\right)  \right)  \frac
{\mathsf{d}z}{z-z_{1}}%
\end{align*}
from which, using (\ref{aan15}), (\ref{aan1}), (\ref{aan2}), and (\ref{aan3}),
we obtain
\begin{align*}
\left\vert \left(  \int_{\gamma}+\int_{\sigma}\right)  \frac{\mathsf{d}%
z}{z-z_{1}}\right\vert  &  \leq\left\vert \int_{\Gamma}\frac{\mathsf{d}%
z}{z-z_{1}}\right\vert +\left\vert \left(  \int_{\gamma}-\int_{\gamma_{0}%
}-\int_{\gamma_{4}}\right)  \frac{\mathsf{d}z}{z-z_{1}}\right\vert \\
&  +\left\vert \left(  \int_{\sigma}-\int_{\gamma_{2}}\right)  \frac
{\mathsf{d}z}{z-z_{1}}\right\vert +\left\vert \left(  \int_{\gamma_{1}}%
-\int_{\gamma_{3}}\right)  \frac{\mathsf{d}z}{z-z_{1}}\right\vert \\
&  <0+\frac{M}{d}\varepsilon+\varepsilon+4\left(  1+\frac{1}{d}\right)
\left(  1+\frac{\delta}{d}\right)  \varepsilon\\
&  =\kappa\varepsilon.
\end{align*}
Since $\varepsilon\in\left(  0,\min\left\{  \alpha,\pi\right\}  \right)  $ is
arbitrary and independent of $\kappa$, it follows that%
\[
\left(  \int_{\gamma}+\int_{\sigma}\right)  \frac{\mathsf{d}z}{z-z_{1}}=0.
\]
%

\medskip
\noindent
\textbf{Step 5: }Conclusion of the proof.%

\smallskip
\noindent
From Step 4 we obtain:%
\begin{align*}
\mathsf{j}(\gamma,z_{1}) &  =\frac{1}{2\pi i}\int_{\gamma}\frac{\mathsf{d}%
z}{z-z_{1}}\\
&  =\frac{-1}{2\pi i}\int_{\sigma}\frac{\mathsf{d}z}{z-z_{1}}\\
&  =\frac{-1}{2\pi i}\int_{0}^{2\pi}\frac{\sigma\,^{\prime}(\theta)}%
{\sigma(\theta)-z_{1}}\mathsf{d}\theta\\
&  =\frac{-1}{2\pi i}\int_{0}^{2\pi}-i\mathsf{d}\theta=1=\mathsf{ind}%
(z_{1};\gamma).
\end{align*}
Finally, if $\zeta$ is any point of $\mathsf{in}(\gamma)$, then it can be
joined to $z_{1}$ by a polygonal path that lies in $-\mathsf{car}(\gamma)$, so
by Proposition \ref{oct31p1}, $\mathsf{j}(\gamma,\zeta)=\mathsf{j}%
(\gamma,z_{1})=1=\mathsf{ind}(\zeta;\gamma)$.%
\hfill

\end{proof}

\section{Concluding remarks}

The properties we have established for Jordan curves and their corresponding
indexes prepare the way for application in the constructive theory of zeroes,
poles, and meromorphic functions, to which we shall turn our attention in
papers that are now in preparation.%

\bigskip

%

\bigskip
%

\bigskip
%

\noindent
\textbf{Author's address: \ }School of Mathematics and Statistics, University
of Canterbury, Christchurch 8140, New Zealand%

\noindent
\textbf{email}: dsbridges.math@gmail.com

\vfill

\begin{flushright}
{\small \copyright \ Douglas S. Bridges, 14 February 2025}
\end{flushright}

\end{document}